\newcommand{\Q}{\mathbb Q}
\newcommand{\C}{\mathbb C}
\newcommand{\Z}{\mathbb Z}
\newcommand{\F}{\mathbb F}
\newtheorem{lem}{Lemma}[section]
\newtheorem{prop}[lem]{Proposition}
\newtheorem{thm}[lem]{Theorem}
\newtheorem{df}[lem]{Definition}
\newtheorem{cor}[lem]{Corollary}
\newtheorem{conj}[lem]{Conjecture}
\newtheorem{claim}[lem]{Claim}
\newcommand{\LRGen}[1]{
{\langle #1 \rangle}
}
\newcommand{\QRSym}[1]{
{\left(\frac{#1}{p}\right)}
}
\newcommand{\LRVec}[1]{
{\left( #1 \right)}
}
\begin{document}

\title{Generalized Skew Hadamard Difference Sets}
\author{Carlos Salazar-Lazaro}
\maketitle
\begin{abstract}
A skew Hadamard difference set (SHDS) is a difference set that satisfies the skew condition $D+ D^{-1} = G - [1]$. It is known that
if a group $G$ admits a skew hadamard difference set, then $|G| = p^{2\alpha + 1} = 3$ mod $4$.
We will generalize skew Hadamard difference sets to Generalized Skew Hadamard Difference Sets (GSHDS) to cover the case $p=1$ mod $4$, and we will extend all known results that yield necessary existence conditions of skew Hadamard difference sets to our generalization, including the known exponent bounds. 
We will also show a set of necessary existence conditions for the family of groups
$G= (\Z/ p^2\Z)^{2\alpha +1 } \times (\Z/ p\Z)$. These conditions will be given
as integral solutions to non-trivial equations in the group algebra of 
$(\Z/ p\Z)^{2\alpha}$. We will close the article with a general $p$-divisibility
condition of the difference intersection numbers $\nu_{G,L}(h,D) = |D \cap h L| - |D^{(n_0)} \cap h L|$ of a GSHDS for a special class of subgroups $L$.
\end{abstract}

\section{Introduction}


We will assume that the reader is familiar with the theory of
Difference Sets as it can be found in Jungnickel's book in  \cite{DJungnickel}, and in
Lander's book in
\cite{Lander}. Also, we will assume that $G$ is 
an abelian group, and we will use the additive notation for its
group operation. Additionally, for an arbitrary element $A = \sum_{g\in G} a_g [g] \in \Z[G]$ in the group algebra of $G$, we will use 
the notation $A(x)$ to represent,

\begin{eqnarray}
\nonumber
A(x) &=& \sum_{g \in D} a_g x^g
\end{eqnarray}

We will view terms of the form $A(x)$ as elements of the group algebra $\Z[G]$ of $G$. Also, given an automorphism $\sigma \in Aut(G)$,
we define $A(x^{\sigma})$ as,

\begin{eqnarray}
\nonumber
A(x^{\sigma}) &=& \sum_{g \in D} a_g x^{\sigma(g)}
\end{eqnarray}

In particular, 
\begin{eqnarray}
\nonumber
A(x^{-1}) &=& A(x^{\mu_{-1}})\\
\nonumber
&=& \sum_{g \in D} a_g x^{\mu_{-1}(g)}\\
\nonumber
&=& \sum_{g \in D} a_g x^{-g}
\end{eqnarray}
Similarly, by $A(x^n)$ we will mean the sum $\sum_{g \in G} a_g x^{n\cdot g}$, where $n \cdot g = g + \cdots + g$ is $g$ summed $n$ times using the sum operation of $G$.
Using the notation of $A(x)$, we will denote by $[1]$ the term $x^0$ where $0$ is the identity element of $G$.
Additionally, whenever $A(x)$ represents a subset of $G$ (i.e. the $a_g \in \{ 0, 1\}$), we will use the notation
$A^{(n)}$ to denote the set represented by $A(x^n)$.

Skew Hadamard Difference Sets (SHDS) are subsets $D$ of an abelian group $G$ such that in the group algebra $\Z[G]$:
\begin{eqnarray}
\nonumber
D(x)D(x^{-1}) &=& (k-\lambda)[1] + \lambda G(x),\\
\nonumber
 D(x) + D(x^{-1}) &=& G(x) - 1.
\end{eqnarray}

Skew Hadamard difference sets are conjectured to exist only in elementary $p$-abelian groups.

\begin{conj}
Let $G$ admit a SHDS. Then, $G = (\Z/ p\Z)^{2\alpha +1}$ where $p=3$ mod $4$.
\end{conj}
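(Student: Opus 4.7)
The plan is to strengthen the already-known fact $|G| = p^{2\alpha+1}$ with $p \equiv 3 \pmod 4$ to the statement that $G$ has exponent $p$. I would argue by contradiction: assume $G$ contains an element of order $p^{s}$ with $s \geq 2$ and derive an incompatibility with the two defining SHDS equations.

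First, I would combine those equations character-theoretically. A SHDS has parameters $(v, (v-1)/2, (v-3)/4)$ with $v = p^{2\alpha+1}$, so for every non-trivial character $\chi$ of $G$, $|\chi(D)|^2 = (v+1)/4$ and $\chi(D) + \overline{\chi(D)} = -1$, whence
\begin{equation*}
\chi(D) = \frac{-1 + \epsilon(\chi)\, i\sqrt{v}}{2}, \qquad \epsilon(\chi) \in \{\pm 1\}.
\end{equation*}
Because $p \equiv 3 \pmod 4$, the classical Gauss sum gives $\sqrt{-p} \in \Q(\zeta_p)$, hence $\chi(D) \in \Z[\zeta_p]$ no matter how large the order of $\chi$ is. Consequently, if $\chi$ has order $p^{s}$ with $s \geq 2$ and $\sigma \in \mathrm{Gal}(\Q(\zeta_{p^{s}})/\Q(\zeta_p))$, then $(\sigma\chi)(D) = \chi(D)$, so $\epsilon$ is constant on each such Galois orbit. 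This is a strong rigidity statement: the sign data on characters of order $p^{s}$ descends from $(G/pG)^\vee \setminus \{1\}$.

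Second, I would translate this rigidity into arithmetic constraints on the intersection numbers $\nu_{G,L}(h,D) = |D \cap hL| - |D^{(n_0)} \cap hL|$ promised in the abstract. Fix a subgroup $L \leq G$ with $G/L \cong \Z/p^{s}\Z$; then $D \bmod L$ is a non-negative integer multiset on $\Z/p^{s}\Z$ all of whose non-trivial character values are $(-1 \pm i\sqrt{v})/2$ with signs constant on Galois orbits of $(\Z/p^{s}\Z)^{\vee}$. Fourier inversion expresses the coefficients of $D \bmod L$ as sums of these values divided by $p^{s}$, and the relation $\chi(D) + \chi(D^{(-1)}) = \chi(G) - 1$ pins down how the $\pm$ signs pair up. The goal is to show that the $\pi$-adic valuations forced on these coefficients, where $\pi = 1 - \zeta_p$, are incompatible with integrality when $s \geq 2$, using the ramification $(p) = (\pi)^{p^{s-1}(p-1)}$ in $\Z[\zeta_{p^{s}}]$ to amplify the obstruction.

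The main obstacle is exactly this last step. Classical exponent bounds for difference sets (Turyn, McFarland, Ma) typically require a self-conjugacy hypothesis that is not automatic in our $p$-group setting, and those techniques are calibrated to rule out large exponents rather than to force exponent equal to $p$. The decisive input is therefore expected to be the $p$-divisibility theorem for $\nu_{G,L}$ alluded to in the abstract, applied to subgroups $L$ of index $p^{2}$: one needs a divisibility sharp enough that, combined with the skew identity $D + D^{(-1)} = G - [1]$ restricted to $L$-cosets and the Galois rigidity of $\epsilon$, no non-negative integral solution can exist. Establishing such a divisibility with the correct exponent of $p$ is where I expect the genuine difficulty to lie.
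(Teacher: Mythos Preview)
The statement you are trying to prove is labeled a \emph{conjecture} in the paper, and the paper does not prove it. It is presented as the well-known open problem that skew Hadamard difference sets exist only in elementary abelian $p$-groups. The paper's contribution is not to settle this conjecture but to record the known partial results (Theorem~\ref{absprop2}, Johnsen's bound $s \leq \alpha+1$, and the Chen--Xiang--Sehgal bound $2s \leq \alpha+1$) and then to extend them to the generalized setting of GSHDS. There is therefore no ``paper's own proof'' to compare against.

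Your proposal is not a proof either, and you say so yourself: the decisive step --- producing a $p$-divisibility statement on the intersection numbers $\nu_{G,L}$ sharp enough to force exponent $p$ --- is exactly where you stop. The $p$-divisibility result the paper actually proves (the final proposition) gives only $p^k \mid \nu_{G,L}(h,D)$ when $L = \ker(\mu_{p^k})$, which is far too weak to rule out exponent $p^2$; it is calibrated to the existing exponent bounds, not to the full conjecture. Your Galois-rigidity observation about $\epsilon(\chi)$ being constant on $\mathrm{Gal}(\Q(\zeta_{p^s})/\Q(\zeta_p))$-orbits is correct and is essentially what underlies the paper's Proposition~\ref{prop7}, but nobody has succeeded in pushing it to exponent $p$. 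If you could complete the $\pi$-adic valuation argument you sketch, you would be resolving a long-standing open problem, not reproducing a result of this paper.
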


The following is what is known about the existence problem for SHDSs. 
This can be found in Chen, Sehgal, and Xiang's paper in \cite{Xia1}.

\begin{thm}
\label{absprop2}
A SHDS has parameters $(v,k,\lambda) = ( v,
\frac{v-1}{2}, \frac{v - 3}{4} )$. Also, if $D \subset G$ is a SHDS,
and $G$ is abelian, then:
    \begin{enumerate}
    \item The order of $G$ satisfies $v = p^{2\alpha +1}$, where $p=3$ mod $4$,
    \item The set $D$ is invariant under the action of the quadratic residues. That is,
  for $(n,p) = 1$ and $\QRSym{n} = 1$, $D(x^n) = D(x)$,
    \item If $\chi$ is any nonprincipal character, then:
    \begin{eqnarray}
    \nonumber
    \chi(D) &=& \frac{-1 + \epsilon_{\chi} \sqrt{-v} }{2},
    \end{eqnarray}
where $\epsilon_{\chi} \in \{ +1,-1\}$,
    \item If $G = \Z / p^{a_1} \Z \times \Z / p^{a_2}\Z \times \cdots
    \times \Z / p^{a_r} \Z$ where: $a_1 \geq a_2 \geq \cdots \geq a_r$ and $
    a_1 \geq 2$, then $a_1 = a_2$.
    \end{enumerate}
\end{thm}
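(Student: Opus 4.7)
The plan is to derive part (3) first, since it drives the whole theorem. The parameters are immediate: the skew condition $D(x) + D(x^{-1}) = G(x) - [1]$ gives $2k = v - 1$, so $k = (v-1)/2$, and substituting into the standard identity $k(k-1) = \lambda(v-1)$ yields $\lambda = (v-3)/4$. Applying a nonprincipal character $\chi$ to both defining equations produces the simultaneous system $|\chi(D)|^2 = k - \lambda = (v+1)/4$ and $\chi(D) + \overline{\chi(D)} = -1$; solving forces $\chi(D) = (-1 + \epsilon_\chi \sqrt{-v})/2$ with $\epsilon_\chi \in \{\pm 1\}$.

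For part (1), the key observation is that $\chi(D)$ is an algebraic integer in $\Q(\zeta_{|G|}) \cap \Q(\sqrt{-v})$. I would apply this to characters of prime order $q$ dividing $|G|$: the required containment $\Q(\sqrt{-v}) \subseteq \Q(\zeta_q)$ forces $q = p$ for a unique prime $p$ and forces $\Q(\sqrt{-v}) = \Q(\sqrt{-p})$, which by the classification of quadratic subfields of $\Q(\zeta_p)$ requires $p \equiv 3 \pmod 4$. Hence $v$ is a power of that single prime $p$, and the integrality requirement $\lambda = (v-3)/4 \in \Z$ then forces $v \equiv 3 \pmod 4$, which rules out even exponents of $p$ and gives $v = p^{2\alpha+1}$.

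For part (2), I would invoke Galois action on $\chi(D)$. Writing $\sqrt{-v} = p^\alpha \sqrt{-p} \in \Q(\zeta_p)$, the automorphism $\sigma_n: \zeta \mapsto \zeta^n$ acts on $\sqrt{-p}$ by the scalar $\QRSym{n}$, a classical Gauss-sum evaluation. So if $\QRSym{n} = 1$ then $\sigma_n$ fixes $\chi(D)$; but also $\sigma_n(\chi(D)) = \chi^n(D) = \chi(D^{(n)})$, so every character agrees on $D$ and $D^{(n)}$, which by character inversion gives $D(x) = D(x^n)$.

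Part (4) is the delicate exponent bound and is the step I expect to be hardest. The strategy is to suppose $a_1 \geq 2$ and $a_1 > a_2$, pick a character $\chi$ of maximal order $p^{a_1}$ coming from a generator of the large cyclic factor, and then restrict $D$ to the kernel $H$ of the order-$p$ character associated to that factor. One then relates $\chi(D)$ to $\chi$-sums over cosets of $H$ using the formula from (3), and tracks congruences of these character sums modulo a prime ideal above $p$ in $\Z[\zeta_{p^{a_1}}]$. The obstacle is combining the known shape $\chi(D) = (-1 + \epsilon_\chi \sqrt{-v})/2$ with the Ma-type $p$-adic valuation estimates on $\chi$-sums restricted to $H$: the excess $p$-adic content forced by $a_1 > a_2$ must contradict the fact that $\sqrt{-v}$ has $p$-adic valuation exactly $\alpha + \tfrac{1}{2}$ in the relevant local field. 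Carrying out this valuation bookkeeping cleanly is the real technical content, and this is where I would lean on the argument of Chen--Sehgal--Xiang in \cite{Xia1} rather than reinvent it.
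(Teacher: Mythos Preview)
The paper does not prove Theorem~\ref{absprop2} directly; it is quoted as a known result from \cite{Xia1}, and the paper then re-proves each part in the more general GSHDS setting. Your arguments for the parameters and for parts (1)--(3) are correct and essentially coincide with those GSHDS proofs (Proposition~\ref{prop2_2}, Corollary~\ref{cor_1}, Lemma~\ref{lem_1} and its corollary, and Proposition~\ref{PropCharValue}): character values first, then the cyclotomic containment to force a single prime and $p\equiv 3\pmod 4$, then Galois action on $\sqrt{-p}$ for the multiplier statement.

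Part (4) is where you diverge. Your sketch via $p$-adic valuations of character sums and ``Ma-type'' estimates is neither the approach recorded in the paper nor the one you should attribute to Chen--Sehgal--Xiang: this part is due to Camion and Mann \cite{PCHBM}, and the paper reproduces their argument verbatim for GSHDS in Proposition~\ref{Camion2}. That argument is far more elementary than what you outline and uses part (2) rather than part (3). Assuming $a_1>a_2$, one takes $S=\{(c_1,\dots,c_l):c_1\in(\Z/p^{a_1}\Z)^*\}$, observes that $S$ is $G_1$-stable so $|S\cap D|=|S|/2=(v-v/p)/2$, and notes that for each $g\in S\cap D$ there is a quadratic residue $1+bp^{a_1-1}$ with $g-(1+bp^{a_1-1})g=(p^{a_1-1},0,\dots,0)$; hence this fixed element has at least $(v-v/p)/2$ representations as a difference from $D$, exceeding the bound $\lambda\le(v-1)/4$ unless $p<2$. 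No local-field valuation bookkeeping is needed, and your proposed contradiction via the valuation of $\sqrt{-v}$ does not obviously materialize without importing substantially more machinery than the problem requires.
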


\begin{thm}
(Johnsen's Exponent Bound) Let $D$ be a SHDS in the abelian $p$-group $G$ of $exp(G) = p^s$ and with $|G|= p^{2\alpha+1}$. Then, $s \leq \alpha +1$.
\end{thm}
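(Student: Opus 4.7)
The plan is to pick a character $\chi$ of maximal order $p^s$, express $\chi(D)$ in two different ways---via a fiber decomposition of $D$ over $\ker\chi$, and via the explicit formula in Theorem \ref{absprop2}(3)---and then compare coefficients in $\Z[\zeta_{p^s}]$. The bound on $s$ will fall out of a nonnegativity constraint on the resulting coefficient formulas.

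Let $\phi \colon G \twoheadrightarrow \Z/p^s\Z$ be the surjection with $\chi(g) = \zeta_{p^s}^{\phi(g)}$; set $q := |\ker\phi| = p^{2\alpha+1-s}$ and $n_c := |D \cap \phi^{-1}(c)|$. Pushing the skew identity $D(x) + D(x^{-1}) = G(x) - [1]$ forward along $\phi$ gives $n_0 = (q-1)/2$, $n_c + n_{-c} = q$ for $c \neq 0$, and $\chi(D) = \sum_{c \in \Z/p^s\Z} n_c \zeta_{p^s}^c$. From Theorem \ref{absprop2}(3), $\chi(D) = (-1 + \epsilon_\chi \sqrt{-v})/2$, and since $p \equiv 3 \pmod 4$ the Gauss sum identity $\sqrt{-p} = \sum_{a=1}^{p-1} \QRSym{a} \zeta_p^a$, combined with $\sqrt{-v} = p^\alpha \sqrt{-p}$ and $\zeta_p = \zeta_{p^s}^{p^{s-1}}$, rewrites this as
\[
2 \chi(D) + 1 \;=\; \epsilon_\chi \, p^\alpha \sum_{a=1}^{p-1} \QRSym{a} \, \zeta_{p^s}^{a p^{s-1}}.
\]

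Subtracting the two expressions produces a $\Z$-linear relation $\sum_c \gamma_c \zeta_{p^s}^c = 0$ in $\Z[\zeta_{p^s}]$. The kernel of the evaluation map $\Z^{\Z/p^s\Z} \to \Z[\zeta_{p^s}]$, $(\gamma_c) \mapsto \sum_c \gamma_c \zeta_{p^s}^c$, is generated as a $\Z$-module by the cyclotomic relations $\sum_{k=0}^{p-1} \zeta_{p^s}^{j + k p^{s-1}} = 0$ for $0 \leq j < p^{s-1}$; equivalently, $(\gamma_c)$ lies in the kernel iff $\gamma_c$ depends only on $c \bmod p^{s-1}$. The layers $c \notin p^{s-1}\Z/p^s\Z$ only contribute the symmetries $n_c = n_{c + p^{s-1}} = \cdots$, but the layer $c \in p^{s-1}\Z/p^s\Z$ forces, for each $a \in \{1, \dots, p-1\}$,
\[
n_{a p^{s-1}} \;=\; \frac{q + \epsilon_\chi \, p^\alpha \, \QRSym{a}}{2}.
\]

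To conclude, $n_{a p^{s-1}}$ is a nonnegative integer, so picking any $a$ with $\epsilon_\chi \QRSym{a} = -1$ forces $q \geq p^\alpha$, i.e.\ $p^{2\alpha+1-s} \geq p^\alpha$, which is exactly $s \leq \alpha + 1$. The step I expect to be most delicate is the coefficient bookkeeping in $\Z[\zeta_{p^s}]$: correctly tracking the constant $+1$ on the left, aligning it with the Gauss-sum support $\{a p^{s-1}\}$ on the right, and rigorously justifying the kernel description of the evaluation map $\Z[x]/(x^{p^s}-1) \to \Z[\zeta_{p^s}]$.
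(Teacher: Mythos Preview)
Your argument is correct. The fiber-count computation, the Gauss-sum rewriting of $\sqrt{-v}$, and the kernel description of the map $\Z^{\Z/p^s\Z}\to\Z[\zeta_{p^s}]$ are all sound; the key identity $n_{ap^{s-1}}=(q+\epsilon_\chi p^\alpha\QRSym{a})/2$ then gives the bound by nonnegativity. The step you flagged as delicate (coefficient bookkeeping in $\Z[\zeta_{p^s}]$) is in fact routine once one notes that $\Z[\zeta_{p^s}]\cong\Z[x]/\Phi_{p^s}(x)$ and that $\Phi_{p^s}(x)=\sum_{k=0}^{p-1}x^{kp^{s-1}}$ forces exactly the ``constant along residues mod $p^{s-1}$'' condition you use.

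Your route, however, differs from the paper's. The paper does not argue via nonnegativity of intersection sizes at all; instead it develops the incidence matrix $A_{H,H_1}$ (Definition~\ref{AGG1Def}), shows $A_{H,H_1}^2=\frac{|H|}{p}I$, and for $H=\Z/p^s\Z$ writes $A_{H,H_1}$ explicitly as an anti-diagonal matrix with entries $1,p,\dots,p^{s-1}$. The projection $G\to H$ turns the character-value formula into $A_{H,H_1}\,\nu = p^\alpha d$ for an integer vector $\nu$ and a $\pm1$ vector $d$; applying $A_{H,H_1}$ once more gives $p^{s-1}\nu = p^\alpha A_{H,H_1}d$, and since $A_{H,H_1}d$ has a $\pm1$ entry (Lemma~\ref{lem_exp1}), one reads off $s-1\le\alpha$. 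So the paper's mechanism is a $p$-divisibility obstruction, not a nonnegativity one. What the paper's framework buys is uniformity: the identical argument with $H=(\Z/p^s\Z)^2$ (using Proposition~\ref{Camion2} and the corresponding $\nu_p=0$ statement) yields the Chen--Xiang--Sehgal bound with no new ideas, and everything carries over verbatim to the GSHDS case $p\equiv 1\pmod 4$. Your approach is more self-contained and closer in spirit to Johnsen's original argument, but would need separate work to reach the sharper bound.
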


\begin{thm}
(Chen-Xiang-Sehgal's Exponent Bound) Let $D$ be a SHDS in the abelian $p$-group $G$ of $exp(G) = p^s$ and with $|G| = p^{2\alpha +1}$. Then, $s \leq \frac{\alpha + 1}{2}$.
\end{thm}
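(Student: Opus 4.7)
My approach is character-theoretic: use the explicit character values from Theorem \ref{absprop2}(3) together with the quadratic-residue invariance of Theorem \ref{absprop2}(2) to pin down the distribution of $D$ across cosets of a maximal cyclic quotient, then sharpen the resulting Johnsen-style bound using the rank-$\geq 2$ structure guaranteed by Theorem \ref{absprop2}(4).

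First, take $\chi \in \widehat{G}$ of order exactly $p^s$ (which exists since $\exp G = p^s$), let $N = \ker \chi$, and push $D$ forward to $D_N = \sum_i a_i c^i \in \Z[G/N]$ where $G/N = \langle c \rangle \cong C_{p^s}$ and $|N| = p^{2\alpha+1-s}$. Theorem \ref{absprop2}(2) forces $a_{mi} = a_i$ for every quadratic residue $m$ mod $p$; combined with the skew identity $a_i + a_{-i} = |N|$ (for $i \neq 0$), $a_0 = (|N|-1)/2$, and the fact that $-1$ is a non-residue mod $p$, this reduces $D_N$ to common values $b_j^\pm$ on the two QR-orbits at each level $v_p(i) = j$, with $b_j^+ + b_j^- = |N|$. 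The orbit sum $\sum_{i \in O_j^+} \zeta^i$ (with $\zeta$ a primitive $p^s$-th root of unity) factors as $\bigl(\sum_{r \in QR(p)} \zeta^{p^j r}\bigr) \cdot \bigl(\sum_{k=0}^{p^{s-j-1}-1} (\zeta^{p^{j+1}})^k\bigr)$, whose inner geometric sum vanishes for $j < s-1$. Only the top level contributes, and the classical Gauss sum $(-1 + \sqrt{-p})/2$ appears; matching $\chi(D_N)$ against $\chi(D) = (-1 + \epsilon p^\alpha \sqrt{-p})/2$ forces $b_{s-1}^+ = (|N| + \epsilon p^\alpha)/2$. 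Iterating with characters of each order $p^t$ for $1 \leq t \leq s$ (by further projection to $C_{p^t}$) gives $b_{t-1}^+ = (|N| + \epsilon_t p^{\alpha + t - s})/2$, and integrality at $t = 1$ is precisely Johnsen's bound $s \leq \alpha + 1$.

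To reach the sharper $s \leq (\alpha+1)/2$, invoke Theorem \ref{absprop2}(4): $s \geq 2$ forces the two largest invariants to coincide at $p^s$, so $G \cong H \times G'$ with $H = (\Z/p^s\Z)^2$ and $|G'| = p^{2\alpha+1-2s}$. Each character $\chi_{m_1, m_2}$ of $H$ of order $p^s$ extends trivially to $G$, still taking value $(-1 + \epsilon p^\alpha \sqrt{-p})/2$ by Theorem \ref{absprop2}(3), and the QR-orbit decomposition of $H \setminus \{0\}$ under the diagonal QR-action has top bi-level orbits indexed by $\Po^1(\F_p)$ with a QR and a non-QR sub-orbit per line. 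A bilinear analog of the geometric-series cancellation localises each character equation to the top bi-level, producing a linear relation of the form $\sum_{\ell \neq \ell^*(\chi)} \QRSym{c_\ell - c_{\ell^*(\chi)}} B_\ell = \epsilon_\chi p^\alpha$ on the centered coefficients $B_\ell = 2 b_{\ell, +}^{\mathrm{top}} - |G'|$. As $\chi$ varies so that $\ell^*(\chi)$ runs over $\Po^1(\F_p)$, we get a $(p+1) \times (p+1)$ linear system whose Paley-like Legendre structure forces $\max_\ell |B_\ell|$ to be at least of order $p^\alpha$; combining with $|B_\ell| \leq |G'|$ then yields $|G'| \geq p^{2s-1}$, equivalently $s \leq (\alpha+1)/2$.

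The main obstacle is the rank-$2$ bilinear Gauss-sum bookkeeping: establishing the vanishing of orbit sums off the top bi-level, inverting (or sharply bounding) the $(p+1) \times (p+1)$ Legendre-symbol coefficient matrix, and ruling out sign collusions among the independent signs $\epsilon_\chi \in \{\pm 1\}$ allowed by Theorem \ref{absprop2}(3) that could cancel the $p^\alpha$ lower bound on $\max_\ell|B_\ell|$. Resolving this sign ambiguity, perhaps by a Fourier-coherence argument over the $\Po^1(\F_p)$-indexed character family or by exploiting the algebraic structure of the Paley coefficient matrix, is the technical heart of the Chen--Xiang--Sehgal improvement over Johnsen.
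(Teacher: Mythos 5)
Your rank-one analysis (the cyclic projection, QR-orbit sums collapsing to the top level, and the integrality argument giving Johnsen's bound $s\le\alpha+1$) is sound, but it is not the statement at issue; the entire content of the Chen--Xiang--Sehgal improvement lives in the rank-two step, and that step is not proved in your proposal. Two concrete problems. First, the claimed localization to a $(p+1)\times(p+1)$ Legendre system indexed by $\Po^1(\F_p)$ does not match the actual orbit structure: in $H=(\Z/p^s\Z)^2$ the unit-orbits of elements of order $p^s$ are indexed by $\Po^1(\Z/p^s\Z)$, of which there are $p^{s-1}(p+1)$, and for a character $\chi_{m_1,m_2}$ of order $p^s$ the orbits surviving the geometric-series cancellation are exactly those lines $(a:b)$ with $m_1a+m_2b\equiv 0 \bmod p^{s-1}$ --- a character-dependent family that does not collapse to a fixed set of $p+1$ unknowns $B_\ell$ without substantial extra argument. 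Second, even granting some such reduction, the quantitative heart --- the lower bound $\max_\ell|B_\ell|\gtrsim p^\alpha$ from the ``Paley-like'' coefficient matrix, together with ruling out cancellation among the independent signs $\epsilon_\chi$ --- is exactly what you flag as unresolved. Since the conclusion $s\le(\alpha+1)/2$ rests entirely on that bound, the proposal is a plan, not a proof.

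For comparison, the paper reaches the bound without ever confronting the sign or matrix-inversion problem. Using the Camion--Mann structure result (Proposition \ref{Camion2}, your Theorem \ref{absprop2}(4)), it writes $G=(\Z/p^s\Z)^2\times L$, projects to $H=(\Z/p^s\Z)^2$, and works with the incidence matrix $A_{H,H_1}$: the difference intersection numbers $\nu$ (integral) and the $\pm1$ vector $d$ of the dual satisfy $A_{H,H_1}\nu=p^\alpha d$ (Propositions \ref{prop8} and \ref{prop3}), and $A_{H,H_1}^2=p^{2s-1}I$ (Corollary \ref{cor1}) converts this into $p^{2s-1-\alpha}\nu=A_{H,H_1}d$. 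The only arithmetic input then needed is that no QRS in $(\Z/p^s\Z)^2$ has all difference coefficients divisible by $p$ (Lemma \ref{lem_exp1}(2)), which is proved by restricting to the $p$-torsion subgroup $(\Z/p\Z)^2$ via the congruence of Proposition \ref{prop7} and the counting bound of Proposition \ref{prop6}(1) (if $p^k$ divides all difference coefficients then $p^{2k+1}$ divides $|G|$). That divisibility-plus-involution argument replaces the hard analytic estimate your approach requires; if you want to salvage your route, you would essentially have to reprove that divisibility fact, at which point the paper's framework is the shorter path.
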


We will introduce the concept of a Generalized Skew Hadamard Difference Set (GSHDS) to generalize the results known for SHDSs.

\begin{df}
\label{GSHDS_def_2_1}
\nonumber A  {\rm Generalized Skew Hadamard Difference Set}
(GSHDS) is an element of the group algebra $D(x) \in \Z[G]$ such
that:
\begin{eqnarray}
\label{GSHDSeqn_2_1} D(x)D(x^{n_0}) &=& (k_0-\lambda)[1] + \lambda G(x),\\
\label{GSHDSeqn_2_2} D(x) + D(x^{n_0}) &=& G(x) - [1],
\end{eqnarray}
where $[1]=x^{0}$ is the multiplicative unit of $\Z[G]$, $k_0 = |
D(x) \cap D(x^{-n_0})|$, and $n_0$ is some Quadratic Non-Residue of
$(\Z/ exp(G) \Z)^*$ for which equation(\ref{GSHDSeqn_2_2}) holds.
\end{df}

The number $n_0$ will be assumed to be a fixed Quadratic Non-Residue of $(\Z/ exp(G) \Z)^*$ for which Equations (\ref{GSHDSeqn_2_1}) and  (\ref{GSHDSeqn_2_2}) hold,
and $\chi_0$ will be assumed to be the principal character of $G$. Equation (\ref{GSHDSeqn_2_2}) will be called the 
``skew condition'' of $D$.

Using the language of GSHDSs, we extend the existence conjecture of SHDS to GSHDS.


\begin{conj}
If $G$ is an abelian group that affords a GSHDS, then $G = (\Z/ p\Z)^{2\alpha + 1}$.
\end{conj}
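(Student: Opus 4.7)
The plan is to mirror the partial attack on the classical SHDS conjecture: first establish a GSHDS analogue of Theorem \ref{absprop2}, then port the Johnsen and Chen-Xiang-Sehgal exponent bounds, and finally try to collapse the exponent down to $p$. Applying an arbitrary nonprincipal character $\chi$ to the two defining equations yields $\chi(D) + \chi(D^{(n_0)}) = -1$ and $\chi(D)\chi(D^{(n_0)}) = k_0 - \lambda$, so the pair $\chi(D), \chi(D^{(n_0)})$ are the conjugate roots of $z^2 + z + (k_0 - \lambda) = 0$. Combining this with the parameter relation produced by the principal character evaluation of the same two equations should force the discriminant $1 - 4(k_0 - \lambda)$ to equal $-v$, and a short rationality argument in the cyclotomic field $\Q(\zeta_{exp(G)})$ should then force $v = p^{2\alpha + 1}$ for some odd prime $p$.

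Next, I would identify a multiplier subgroup of $(\Z/ exp(G) \Z)^*$ that stabilizes $D$, constructed as the Galois stabilizer of $\sqrt{-v}$ acting on character values. This group automatically contains $n_0^2$, and in the SHDS special case where $n_0 = -1$ it specializes to the quadratic residue group; in the GSHDS setting it should still suffice for the multiplier-based arguments. With this group in hand and the character formula $\chi(D) = (-1 + \epsilon_{\chi}\sqrt{-v})/2$ established, the proofs of the Johnsen bound $s \leq \alpha + 1$ and the Chen-Xiang-Sehgal bound $s \leq (\alpha + 1)/2$ should port essentially verbatim, since both only use the character value shape together with restriction of $D$ to subgroups of index a power of $p$. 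The structural conclusion $a_1 = a_2$ of Theorem \ref{absprop2}(4) should come out of the same argument applied to the quotient by the socle.

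The main obstacle — and this is where I expect to be genuinely stuck — is closing the interval from $s \leq (\alpha+1)/2$ down to $s = 1$. This gap is already open in the classical SHDS setting, so the conjecture as stated is aspirational and I would not claim to resolve it in full generality. My realistic target is instead the next-smallest exponent case $s = 2$, where the ambient group is $(\Z/ p^2 \Z)^{2\alpha+1} \times (\Z/ p \Z)$, precisely the family singled out in the abstract. Here I would try to combine a general $p$-divisibility result for the intersection numbers $\nu_{G,L}(h,D) = |D \cap hL| - |D^{(n_0)} \cap hL|$ with the character formula above to produce congruence obstructions on the coefficients of $D(x)$ modulo $p$, hoping to contradict the existence of a GSHDS whenever the exponent strictly exceeds $p$. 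A full resolution of the conjecture would likely require genuinely new ideas beyond this multiplier/exponent-bound toolkit.
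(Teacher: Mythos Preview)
The statement is a \emph{conjecture}, not a theorem: the paper does not prove it, and neither does your proposal (as you candidly acknowledge). Your plan to establish partial results --- character value formula, prime-power order, Camion--Mann structure, Johnsen and Chen--Xiang--Sehgal exponent bounds --- is exactly the program the paper carries out in Sections~2--7, and your outline of the argument via the quadratic $z^2 + z + (k_0 - \lambda) = 0$ matches the paper's Corollary~\ref{cor_1} and subsequent analysis.

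There is, however, a concrete error in your setup. You assert that the discriminant $1 - 4(k_0 - \lambda)$ should equal $-v$ and write the character formula as $\chi(D) = (-1 + \epsilon_\chi \sqrt{-v})/2$. This holds only in the classical SHDS case $k_0 = k$. The whole point of the GSHDS generalization is to cover $p \equiv 1 \pmod 4$ as well, and in that regime $k_0 = 0$, $\lambda = (v-1)/4$, and the discriminant is $+v$, giving real character values $(-1 \pm \sqrt{v})/2$. The paper handles both cases uniformly by writing the discriminant as $\QRSym{-1}v$ (Proposition~\ref{PropCharValue}); if you hard-code $-v$ you will be unable to treat the Paley-type branch, and your rationality argument for forcing $v$ to be an odd prime power will not go through as written.

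Finally, your stated ``realistic target'' of ruling out $s = 2$ for the family $(\Z/p^2\Z)^{2\alpha+1} \times (\Z/p\Z)$ is also open: the paper derives only necessary conditions (Proposition~\ref{gshds3_prop1}) in the form of group-algebra equations for auxiliary elements $A(x), B(x), L_0(x) \in \Z[K]$, and explicitly leaves their infeasibility as an open problem. So ``hoping to contradict existence'' via $p$-divisibility of the $\nu_{G,L}$ alone is more optimistic than what the paper achieves.
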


We will prove the analogue of Theorem \ref{absprop2} for GSHDSs including the known exponent bounds.
We will also show the following existence conditions for the family of groups $G = (\Z/ p\Z) \times (\Z/ p^2\Z)^{2\alpha +1 }$.

\begin{prop}
\label{gshds3_prop1}
Let $G = (\Z/ p\Z) \times H$, $H = (\Z/ p^2\Z)^{2\alpha +1 }$, $L = p \cdot H = (\Z/ p\Z)^{2\alpha + 1}$, and $K = \frac{L}{(\Z/ p\Z)} = (\Z/ p\Z)^{2\alpha}$.
Assume that $D \subset G$ is a GSHDS. Let $D' = H \cap D$, $D'' = L \cap D$. There is an element $L_0(x) \in \Z[K]$ that depends on the structure of $G$, and there
are elements $A(x),B(x) \in \Z[K]$ that depend on $D$ such that,
\begin{enumerate}
 \item The following holds,
\begin{eqnarray}
\nonumber
 L_0(x) * L_0(x^{-1}) &=& p^{4\alpha -1} [1] - p^{2\alpha -1 } K(x)
\end{eqnarray}
 \item If $\alpha = 1$, then $D''$ is a GSHDS in $L$.
 \item The following holds for $A(x),B(x),L_0(x)$,
\begin{eqnarray}
 \nonumber
 p^{2\alpha} A(x) &=& \chi_0(A) K(x) + L_0(x) * B(x^{-1}) \\
 \nonumber
 \chi_0(A) &=& p^{\alpha -1} \epsilon_0 b_0 \\
 \nonumber
 p^{2\alpha} B(x) &=& \chi_0(B) K(x) + p L_0(x) * A(x^{-1})\\
 \nonumber
 \chi_0(B) &=& p^{\alpha} \epsilon_0 a_0
\end{eqnarray} 
Where, $\epsilon_0 \in \{ -1,1\}$, $\chi_0$ is the principal character, $a_0, b_0$ are odd numbers, and all the coefficients of $A(x),B(x)$ are odd.
\end{enumerate}
\end{prop}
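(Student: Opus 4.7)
The plan is to exploit the nested subgroup structure $\Z/p\Z \subset L \subset H \subset G$ together with the quotient map $L \twoheadrightarrow K$, pushing the defining GSHDS identities down through this tower by combining linear restriction $\Z[G] \to \Z[L]$ with the ring homomorphism $\Z[L] \to \Z[K]$. An immediate warm-up is that the skew condition is inherited: since $L$ and $H$ are closed under multiplication by $n_0$, restricting $D(x)+D(x^{n_0}) = G(x)-[1]$ coefficient-wise to $H$ and to $L$ yields $D'(x)+D'(x^{n_0}) = H(x)-[1]$ and $D''(x)+D''(x^{n_0}) = L(x)-[1]$, which pin down $|D'|, |D''|$ and supply parity information needed later for the coefficient assertions about $A, B$.

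For part (1), since $L_0$ depends only on $G$, I would build it directly from the short exact sequence $0 \to \Z/p\Z \to L \to K \to 0$ hidden inside $H$. A natural candidate is a signed sum $L_0(x) = \sum_{k \in K}\epsilon(k)[k]$ in which $\epsilon(k)$ records a Gauss-sum-type contribution of the fibre over $k$, for instance obtained from the quadratic character on the $\Z/p\Z$ kernel via a fixed set-theoretic section of $L \twoheadrightarrow K$. Part (1) then reduces to a character computation on $K$: one forces $\chi_0(L_0) = 0$ and $|\chi(L_0)|^2 = p^{4\alpha-1}$ for every non-principal $\chi$ of $K$, after which Fourier inversion on $K$ assembles these values into the stated group-algebra equation.

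For part (2), with $\alpha = 1$ the group $L$ has order $p^3$, and with the skew condition already in hand the remaining task is to verify $D''(x)D''(x^{n_0}) = (k_0' - \lambda')[1] + \lambda' L(x)$. I would decompose $D(x) = D''(x) + (D'(x)-D''(x)) + (D(x)-D'(x))$, substitute into the full GSHDS product equation, and project coefficient-wise onto $\Z[L]$; the cross terms then project in a pattern controlled by $L_0$ from part (1), and for $\alpha = 1$ the corresponding $p$-adic valuations force those contributions to collapse to a scalar multiple of $L(x)-[1]$, so $D''$ inherits the product condition.

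For part (3), I would define $A(x), B(x) \in \Z[K]$ as coset-slice sums: $B(x)$ from decomposing $D$ along cosets of $H$ in $G$, and $A(x)$ from decomposing $D'$ along cosets of $L$ in $H$, each pushed through $L \twoheadrightarrow K$. Evaluating characters (a non-principal $\chi$ of $K$, extended appropriately to $L$ and thence to $G$) against the coefficient comparisons of the full GSHDS product equation, combined with the skew condition, should yield the coupled relations
\begin{eqnarray*}
p^{2\alpha}\,\chi(A) &=& \chi(L_0)\,\overline{\chi(B)},\\
p^{2\alpha}\,\chi(B) &=& p\,\chi(L_0)\,\overline{\chi(A)},
\end{eqnarray*}
whose Fourier inversion over $K$ is precisely the stated group-algebra identities, while $\chi_0(A), \chi_0(B)$ are fixed from $|D'|, |D''|$ together with the Gauss-sum valuation analysis underlying the GSHDS analogue of Theorem \ref{absprop2}(3), giving the claimed $p^{\alpha-1}\epsilon_0 b_0$ and $p^\alpha \epsilon_0 a_0$; parity of the coefficients of $A, B$ then follows from reading the skew condition modulo $2$. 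The main obstacle is precisely this step: pinning down the normalizations so that all coefficients are integral and odd and so that the exact powers $p^{2\alpha}, p^\alpha, p^{\alpha-1}$ emerge demands careful $p$-adic valuation bookkeeping in the spirit of Chen-Xiang-Sehgal together with meticulous sign-tracking across the several projections.
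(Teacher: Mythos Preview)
Your proposal has a genuine structural gap: the element $L_0$ is \emph{not} built from the short exact sequence $0 \to \Z/p\Z \to L \to K \to 0$ at all, and the relations in part (3) do not come from projecting the product equation $D(x)D(x^{n_0}) = (k_0-\lambda)[1] + \lambda G(x)$.

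The paper's construction of $L_0$ lives entirely in the order-$p^2$ layer. One views $H = (\Z/p^2\Z)^{2\alpha+1}$ as the Galois ring $GR(p^2,2\alpha+1)$, chooses Teichm\"uller units $l_j$ (quadratic residues of $\F_q^*$, $q = p^{2\alpha+1}$) and representatives $l_s'$ for $\F_q/\F_p$, and then computes an incidence matrix $A_{H,H_1}$ indexed by $H_1$-orbits, whose block decomposition has blocks $J_{H,s,t}$ with constant row sums $\lambda_{s,t} = \sum_j \bigl(\tfrac{Tr(l_j(1+p(l_s'+l_t')))}{p}\bigr)$. The element is $L_0(x) = \sum_i \lambda_{1,i}\,x^{k_i}$, and part (1) is derived from the matrix identity $A_{H,H_1}^2 = p^{2\beta-1}I$ pushed down to the block level, giving $L^2 = p^{2\alpha-1}(p^{2\alpha}I - J)$ for $L = [[\lambda_{s,t}]]$. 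A signed fibre sum over $L \to K$ as you suggest carries no information about the $p^2$-torsion of $H$, so there is no reason it would satisfy the stated equation.

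For part (3), the paper never touches the product equation. Instead it uses the character-value formula for a GSHDS packaged as the matrix identity $A_{H,H_1}\,d' = p^{2\alpha}\,\nu'$, where $d'$ is the $\pm 1$ vector of $D' = D\cap H$ and $\nu'$ is a vector of (dual) difference intersection numbers with respect to the $\Z/p\Z$ kernel. One expands this via the block form of $A_{H,H_1}$, takes inner products with $\underline{j}$ to collapse each block equation to the scalars $a_i = \langle d_i',\underline{j}\rangle$, $b_i = \langle \nu_i',\underline{j}\rangle$, and obtains the coupled system $p^{2\alpha-1}b = p^{\alpha-1}\epsilon_0 a_0\,\underline{j} + L a$ and its companion; Fourier inversion on $K$ then gives the stated identities. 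Part (2) for $\alpha=1$ also falls out of this same block equation: one reads off that $p \mid A_{L,L_1}d_0'$, hence $\nu_p(D'') \ge 1$, which is exactly the GSHDS criterion when $|L| = p^3$. Your proposed route through coefficient-wise projection of the product equation would have to reproduce all of this Galois-ring bookkeeping implicitly, and the sketch gives no mechanism for doing so.
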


The proof of Proposition (\ref{gshds3_prop1}) will yield an explicit formula for $L_0(x)$ in terms of the structure of $G$. We leave as an open problem the explicit
calculation of $L_0$ in terms of $p$ and $\alpha$.

We will then  close this article with a $p$ divisibility condition on the Difference Intersection numbers of $D$.

\begin{df}
Let $G$ be an abelian $p$-group, and $L \subset G$ a subgroup. Assume that $G = g_1 L \cup g_2 L \cup \cdots \cup g_h L$. Then, the Difference
Intersection Number $\nu_{G,L}(g_i,D)$ of $D$ at $g_i$ with respect to $L$ is,
\begin{eqnarray}
 \nonumber
 \nu_{G,L}(g_i,D) &=& |D \cap g_i L| - |D^{(n_0)} \cap g_i L|
\end{eqnarray}
\end{df}

\begin{prop}
Let $D$ be a GSHDS in $G$. Assume that $exp(G) = p^s$, and $k \leq s-1$.
Consider the map $\mu_{p^k} : G \rightarrow G$ by $\mu_{p^k}(g) = p^k \cdot g$. Let $ L = Ker(\mu_{p^k}) = \{ g \in G \mid p^k \cdot g = 0 \}$, and
$H = \mu_{p^k}(G) = p^k \cdot G$. Then,
\begin{enumerate}
 \item There are integer constants $a_{p^k}, b_{p^k}, c_{p^k}$ such that,
\begin{eqnarray}
 \nonumber
  D(x)^{p^k} &=& c_{p^k}[1] + a_{p^k} D(x) + b_{p^k} D(x^{n_0})
\end{eqnarray}
 \item Let $D_H = D \cap H$, assume the coset decomposition $G = L \cup g_1 L \cup \cdots \cup g_r L$. Let $h_i = \mu_{p^k}(g_i)$, and assume that 
$h_{i_1}, \ldots, h_{i_t}$ is a set of $H_1 = (\Z/ exp(H)\Z)^*$ orbit representatives. Then, the following holds in $\Z[H]$,
\begin{eqnarray}
\nonumber
  (a_{p^k} - b_{p^k}) (D_H(x) - D_H(x^{n_0})) &=& \sum_{k=1}^t \nu_{G,L}(g_{i_k},D) ( O_{h_{i_k}}(x) - O_{h_{i_k}}(x^{n_0}) )  \\
 \nonumber
  & & + p^k ( C_H(x) - C_H(x^{n_0}))
\end{eqnarray}
 Where $O_{h_i}$ is the $H_2 = (\Z/ exp(H)\Z)^{*2}$ orbit of $h_i \in H$, and $C_H \in \Z[H]$.
 \item The exact power of $p$ dividing $a_{p^k} - b_{p^k}$ is $p^k$.
 \item The Difference Intersection numbers $\nu_{G,L}(h_i,D)$ are divisible by $p^k$.
\end{enumerate}
\end{prop}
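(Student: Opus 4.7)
The plan is to prove Part 1, then Part 3, and finally to derive Parts 2 and 4 together, with Part 3 supplying the key $p$-adic input.

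For Part 1, I would observe that combining the skew condition $D(x^{n_0}) = G(x) - [1] - D(x)$ with the product relation gives $D(x)D(x^{n_0}) = k_0[1] + \lambda D(x) + \lambda D(x^{n_0})$ and, after substituting back, $D(x)^2 = (k_0 - \lambda - 1)D(x) + (k_0 - \lambda)D(x^{n_0})$. Both products lie in $\Z[1] \oplus \Z \cdot D(x) \oplus \Z \cdot D(x^{n_0})$, so the subring of $\Z[G]$ generated by $D(x)$ has rank at most three. An induction on $m$ then produces integers $c_m, a_m, b_m$ with $D(x)^m = c_m[1] + a_m D(x) + b_m D(x^{n_0})$, and taking $m = p^k$ gives Part 1.

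For Part 3, apply a nonprincipal character $\chi$ to Part 1 and use $\chi(D^{(n_0)}) = -1 - \chi(D)$ to extract $\chi(D)^{p^k} = (c_{p^k} - b_{p^k}) + (a_{p^k} - b_{p^k})\chi(D)$; thus $a_{p^k} - b_{p^k}$ is the Lucas expression $(\zeta^{p^k} - \bar\zeta^{p^k})/(\zeta - \bar\zeta)$ with $\zeta = \chi(D) = \frac{-1 + \epsilon_\chi\sqrt{-v}}{2}$ (by the GSHDS analog of Theorem \ref{absprop2}). I would work in the ramified quadratic local field $\Q_p(\sqrt{-v})/\Q_p$, whose unique prime $\p$ satisfies $\p^2 = (p)$ and $v_\p(\zeta - \bar\zeta) = 2\alpha + 1$. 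Expanding
\[
\frac{\zeta^n - \bar\zeta^n}{\zeta - \bar\zeta} = \sum_{j \ge 0} \binom{n}{j+1}(-(\zeta-\bar\zeta))^j \zeta^{n-1-j}
\]
at $n = p^k$ and using $v_p(\binom{p^k}{j+1}) = k - v_p(j+1)$ together with $\alpha \ge 1$ (forced by $k \le s-1$ for $k \ge 1$), the $j=0$ term alone attains the minimum $\p$-valuation $2k$, yielding $v_\p(a_{p^k} - b_{p^k}) = 2k$ and hence $v_p(a_{p^k} - b_{p^k}) = k$.

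For Part 2, subtract the $n_0$-substituted Part 1 from itself (using $D(x^{n_0^2}) = D(x)$ since $n_0^2$ is a QR) to obtain $D(x)^{p^k} - D(x^{n_0})^{p^k} = (a_{p^k} - b_{p^k})(D(x) - D(x^{n_0}))$. Applying the $\Z[H]$-module projection $\pi_H : \Z[G] \to \Z[H]$, which retains only $H$-supported terms and commutes with $n_0$-substitution since $n_0 H = H$, the left side becomes $(a_{p^k} - b_{p^k})(D_H(x) - D_H(x^{n_0}))$. The Frobenius congruence $D(x)^{p^k} = D(x^{p^k}) + p F_k$ (standard multinomial divisibility) then yields $\pi_H(D(x)^{p^k} - D(x^{n_0})^{p^k}) = D(x^{p^k}) - D(x^{n_0 p^k}) + p[\pi_H(F_k)(x) - \pi_H(F_k)(x^{n_0})]$. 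Expanding $D(x^{p^k}) = \sum_{h \in H}|D \cap (\tilde h + L)|x^h$ and using QR-invariance of $D$ plus the skew identity $\nu_{G,L}(n_0 g, D) = -\nu_{G,L}(g, D)$ (from $|D \cap c| + |D^{(n_0)} \cap c| = |L|$ for $c \ne L$) rewrites the main term as $\sum_k \nu_{G,L}(g_{i_k}, D)(O_{h_{i_k}}(x) - O_{h_{i_k}}(x^{n_0}))$. The Frobenius remainder, naturally divisible by $p$, is bootstrapped to $p^k$-divisibility using the $p^k$-divisibility of the LHS from Part 3, packaging it as $p^k(C_H(x) - C_H(x^{n_0}))$. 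Part 4 then follows by dividing the resulting identity by $p^k$: the integer $u_k = (a_{p^k} - b_{p^k})/p^k$ and the integrality of $C_H - C_H^{(n_0)}$, combined with pairwise disjoint supports of the $H_1$-orbit differences $O_{h_{i_k}}(x) - O_{h_{i_k}}(x^{n_0})$, force each $\nu_{G,L}(g_{i_k}, D)/p^k$ to be an integer; QR-invariance then extends this to all coset representatives.

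The principal obstacle is the sharp $p^k$-divisibility of Part 3, which requires precise bookkeeping of $p$-adic valuations in the ramified local extension where multiple Lucas terms have comparable $\p$-valuations; a secondary subtlety is the bootstrap from $p$- to $p^k$-divisibility in the Frobenius remainder of Part 2, essentially equivalent to Part 4 itself and requiring the two parts to be handled in tandem.
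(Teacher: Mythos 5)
Your Parts 1 and 3 are sound and essentially coincide with the paper's own argument: Part 1 is just the three-dimensionality of the algebra $\LRGen{[1],D(x),D(x^{n_0})}$ from Proposition \ref{prop2_2}, and your Lucas-sequence expansion of $(\zeta^{p^k}-\bar\zeta^{p^k})/(\zeta-\bar\zeta)$ with $\mathfrak{p}$-adic valuations in the ramified quadratic extension is the same computation the paper carries out by expanding $\chi(D)^{p^k}-\chi(D^{(n_0)})^{p^k}$ with the binomial theorem (note only that for $p\equiv 1\bmod 4$ the relevant quantity is $\sqrt{\QRSym{-1}v}$, not $\sqrt{-v}$, which changes nothing in the valuation count); the hypothesis $\alpha\geq 1$ you need is indeed available for $k\geq 1$ via Johnsen's bound $s\leq\alpha+1$.

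The genuine gap is in Part 2, and it propagates to Part 4. From the multinomial theorem you claim only $D(x)^{p^k}=D(x^{p^k})+pF_k$, i.e.\ divisibility of the Frobenius remainder by $p$, and you then propose to ``bootstrap'' this to $p^k$-divisibility using the $p^k$-divisibility of $(a_{p^k}-b_{p^k})(D_H(x)-D_H(x^{n_0}))$ from Part 3. But at that point your identity reads $(a_{p^k}-b_{p^k})(D_H(x)-D_H(x^{n_0}))=\sum_k\nu_{G,L}(g_{i_k},D)(O_{h_{i_k}}(x)-O_{h_{i_k}}(x^{n_0}))+p(\cdots)$, so concluding that the remainder is divisible by $p^k$ requires knowing that the $\nu_{G,L}(g_{i_k},D)$ are divisible by $p^k$ --- which is exactly Part 4, which you in turn deduce by dividing Part 2 by $p^k$. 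You concede the two are ``essentially equivalent,'' but handling them ``in tandem'' as written is circular: neither is established independently. The paper has no such bootstrap; its key step is the stronger congruence $D(x)^{p^k}=\mu_{p^k}\cdot D(x)+p^kC(x)$ (remainder divisible by $p^k$, not merely by $p$), which it invokes before restricting to $H$, so Part 2 comes out with the $p^k(C_H(x)-C_H(x^{n_0}))$ term directly, and only afterwards is Part 4 read off from Parts 2 and 3 via the disjoint supports of the orbit differences --- that last deduction you do have correctly. To repair your argument you must actually prove a mod-$p^k$ Frobenius-type statement for $D(x)^{p^k}$ (or give an independent proof of the $p^k$-divisibility of the difference intersection numbers, say by induction on $k$); the mod-$p$ congruence together with Part 3 does not suffice.
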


\section{Generalized Skew Hadamard Difference Sets}




We will assume that $G$ is an abelian group, and we 
will use the additive notation for its group operation.
Using the definition of generalized skew Hadamard
difference sets, we will derive the results that will yield a
generalization of theorem \ref{absprop2}. 




The study of generalized skew Hadamard difference sets is motivated by the existence problem of skew Hadamard difference sets and the extension of the known necessary existence conditions
to the case when $p \neq 3$ mod $4$. However, we note that Lam
has considered a similar generalization for cyclic difference sets, where only Equation (\ref{GSHDSeqn_2_1}) is forced to hold, the skew condition is violated, and $G$ is a cyclic group.
We refer the reader to \cite{Lam1} and \cite{Lam2} for Lam's results.

A consequence of the definition of GSHDS is that the algebra generated by $\{ [1], D(x), D(x^{n_0}) \}$ in
the group algebra $\Z[G]$ is a
$3$-dimensional association scheme. We refer the reader to \cite{Bannai1} for an introduction to the field of association schemes. The following result
classifies GSHDSs as SHDSs or partial difference sets with Paley parameters according to the value of $k_0$ of Definition (\ref{GSHDS_def_2_1}). We refer the reader to Jungnickel's book on
difference sets in \cite{DJungnickel} for an introduction to Paley type partial difference sets.

\begin{prop}
\label{prop2_2}
 Let $D$ be a GSHDS. Let $A(D) = \LRGen{ [1], D(x), D(x^{n_0}) }$. Then, 
\begin{enumerate}
 \item The algebra $A(D)$ is a $3$-dimensional association scheme.
 \item The values of $k_0$ are  $0$ or $k$, where $k$ is the size of the set $D$.
 \item If $k_0 = 0$, then $D$ is a Paley type partial difference set with parameters $(v,\frac{v-1}{2}, \frac{v-5}{4}, \frac{v-1}{4})$. 
 \item If $k_0 = k$, then $D$ is skew Hadamard difference set with parameters $(v,\frac{v-1}{2},\frac{v-3}{4})$.
\end{enumerate}
\end{prop}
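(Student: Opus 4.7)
The plan is to reduce everything to the quadratic satisfied by the character values of $D$, together with some bookkeeping in the group algebra. First I would apply $x \mapsto x^{n_0}$ to the skew condition (\ref{GSHDSeqn_2_2}) to obtain $D(x^{n_0^2}) = D(x)$, so $D$ is invariant under multiplication by every square modulo $\exp(G)$. Using $G(x) = [1] + D(x) + D(x^{n_0})$ to substitute back into (\ref{GSHDSeqn_2_1}) and into its image under $x \mapsto x^{n_0}$ expresses each of the products $D(x)^2$, $D(x)D(x^{n_0})$, and $D(x^{n_0})^2$ as an explicit integer combination of $\{[1], D(x), D(x^{n_0})\}$. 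These three elements have pairwise disjoint supports and are therefore linearly independent, so $A(D)$ is closed under multiplication, of dimension exactly $3$, and the intersection numbers required for an association scheme can be read off directly. The only remaining association scheme axiom, that each relation class be closed under transpose, is equivalent to $-D \in \{D, D^{(n_0)}\}$ and will fall out of the character analysis.

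Applying a non-principal character $\chi$ to (\ref{GSHDSeqn_2_1}) and (\ref{GSHDSeqn_2_2}) gives $\chi(D) + \chi(D^{(n_0)}) = -1$ and $\chi(D)\chi(D^{(n_0)}) = k_0 - \lambda$, so both values are the two roots of $z^2 + z + (k_0 - \lambda)$. The principal character gives $k = (v-1)/2$ and $k^2 = k_0 + \lambda(v-1)$, so the discriminant reduces to $\Delta = v(v-1-4k_0)/(v-1)$. Since $\chi(-D) = \overline{\chi(D)}$ must also solve the same quadratic, for every non-principal $\chi$ one has $\chi(-D) = \chi(D)$ when $\Delta > 0$ and $\chi(-D) = \chi(D^{(n_0)})$ when $\Delta < 0$; the degenerate case $\Delta = 0$ would force $\chi(D) = -1/2$ on every non-principal $\chi$, and Fourier inversion then produces non-integer coefficients for $D$, which is impossible. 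Thus $-D = D$ or $-D = D^{(n_0)}$, supplying the transpose axiom that completes part (1). Substituting these two alternatives into $k_0 = |D \cap (-n_0)D|$, using $(-n_0)D = n_0(-D)$ together with the disjointness $D \cap D^{(n_0)} = \emptyset$ in the first case and $D^{(n_0^2)} = D$ in the second, yields $k_0 = 0$ and $k_0 = k$ respectively, which is part (2).

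Parts (3) and (4) are then readouts. In the $k_0 = 0$ branch, $\lambda = (v-1)/4$ and $D(x^{-1}) = D(x)$, so $D(x)D(x^{-1}) = D(x)^2$ equals $\tfrac{v-1}{4}[1] - D(x) + \tfrac{v-1}{4}G(x)$ by the expansion from the first paragraph; this is precisely the Bose-Mesner equation of a Paley-type partial difference set with parameters $(v,(v-1)/2,(v-5)/4,(v-1)/4)$. In the $k_0 = k$ branch, $\lambda = (v-3)/4$ and $D(x^{-1}) = D(x^{n_0})$, so (\ref{GSHDSeqn_2_1}) becomes the classical difference set identity $D(x)D(x^{-1}) = (k-\lambda)[1] + \lambda G(x)$; together with the skew condition this exhibits $D$ as a SHDS of the stated parameters.

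The only nontrivial step is the dichotomy $-D \in \{D, D^{(n_0)}\}$: it requires matching the Galois action on the two roots of the quadratic with the geometric action $g \mapsto -g$ on $G$. The ingredients that make this work are that the quadratic has coefficients depending only on $k_0$ and $\lambda$, so its sign behavior is uniform across all non-principal characters, together with the algebraic-integer constraint used to rule out $\Delta = 0$. Once the dichotomy is in hand, the remaining parameter computations and the identification of $A(D)$ with the Bose-Mesner algebra of an association scheme are essentially mechanical.
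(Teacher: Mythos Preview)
Your proof is correct and reaches the same conclusions, but the order of deductions is inverted relative to the paper's. The paper first extracts the purely combinatorial identity $k^2 = k_0 + 2\lambda k$ by applying the principal character to $D(x)D(x^{n_0}) = k_0[1] + \lambda D(x) + \lambda D(x^{n_0})$; this gives $k \mid k_0$, and together with the trivial bound $0 \le k_0 = |D \cap D^{(-n_0)}| \le k$ one gets $k_0 \in \{0,k\}$ immediately. Only afterwards does the paper read off $D^{(-1)} = D$ (from $k_0 = 0$, since then $D$ and $D^{(-n_0)}$ are disjoint of size $k$ inside $G\setminus\{0\}$) or $D^{(-1)} = D^{(n_0)}$ (from $k_0 = k$). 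You instead establish the dichotomy $-D \in \{D, D^{(n_0)}\}$ first, via the sign of the discriminant of $z^2 + z + (k_0 - \lambda)$ and Fourier inversion, and then recover $k_0$ from it. The paper's route is shorter and avoids the case analysis on $\Delta$ (in particular the $\Delta = 0$ elimination), while yours makes explicit the connection between the transpose axiom and the reality of the eigenvalues, which is conceptually pleasant.

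One small overstatement to flag: from $D(x^{n_0^2}) = D(x)$ you may conclude only that $n_0^2$ is a numerical multiplier, not that every square modulo $\exp(G)$ is; full invariance under all squares needs $|G|$ to be a prime power and is established later in the paper. Since your argument only ever uses $D^{(n_0^2)} = D$, this does not create a gap.
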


\begin{proof}
Clearly, by definition of $D$, we have:
\begin{eqnarray}
 \label{gshds_eq1}
 D(x)D(x^{n_0}) &=& k_0 [1] + \lambda D(x) + \lambda D(x^{n_0})\\
 \label{gshds_eq2}
 D(x) D(x) &=& (k-k_0)[1] + (k-1-\lambda) D(x) + (k-\lambda) D(x^{n_0})\\
 \nonumber
 D(x^{n_0}) D(x^{n_0}) &=& (k-k_0)[1] + (k - \lambda) D(x) + (k -1 -\lambda) D(x^{n_0}) 
\end{eqnarray}
Thus, $A(D)$ is a $3$-dimensional algebra. It suffices to show that $\{ D(x^{-1}), D(x^{-n_0}) \} = \{D(x), D(x^{n_0}) \}$ in order to show that $A(D)$ is an
association scheme.
Consider Equation (\ref{gshds_eq1}) as an equation of sets. Clearly, this equation yields:
\begin{eqnarray}
 \nonumber
 k^2 &=& k_0 + \lambda 2 k
\end{eqnarray}
Thus, $k_0 \leq k$ and $k$ divides  $k_0$. This forces $k_0 = 0$ or $k_0 = k$. 

 Assume that $k_0 = 0$.
 This forces $D(x^{-1}) = D(x)$ and $D(x^{-n_0}) = D(x^{n_0})$. Thus, $A(D)$ is an association scheme. Note that Equation (\ref{gshds_eq2}) yields,
  \begin{eqnarray}
  \nonumber
    D(x) D(x^{-1}) &=& k[1] + (k-1 - \lambda) D(x) + (k-\lambda) D(x^{n_0})
  \end{eqnarray}
 Thus, $D$ is a partial difference set with parameters $(v,k, k-1 - \lambda, k-\lambda)$. By viewing Equation (\ref{gshds_eq1}) as sets, we deduce a formula for $\lambda$.
  \begin{eqnarray}
    \nonumber
    k^2 &=& \lambda(v-1)
  \end{eqnarray}
  Thus, $\lambda = \frac{ v- 1}{4}$, and $k-\lambda = \frac{v-1}{4}$ and $k-\lambda -1 = \frac{v-5}{4}$.

 Now, assume that $k_0 = k$.
 This forces $D(x^{-1}) = D(x^{n_0})$ and $D(x^{-n_0}) = D(x)$. Thus, $A(D)$ is an association scheme. Note that Equation (\ref{gshds_eq1}) yields,
  \begin{eqnarray}
   \nonumber
   D(x)D(x^{-1}) &=& k [1] + \lambda D(x) + \lambda D(x^{-1})
  \end{eqnarray}
  Thus, $D$ is a skew Hadamard difference set with parameters $(v,k,\lambda)$. By viewing the previous equation as an equation of sets, we deduce that,
  \begin{eqnarray}
   \nonumber
    k^2 &=& k + \lambda(v-1)
  \end{eqnarray}
   Which yields $\lambda = \frac{v-3}{4}$.

\end{proof}

We note that the well known example of SHDSs given by the quadratic residues in $\F_{p^{2\alpha +1 }}$ whenever $p=3$ mod $4$ also extends to a Paley type partial difference set for the case $p=1$ mod $4$. This is given by Example 1 of Leung and Ma's paper in \cite{LeungMa}. Hence, the set of GSHDSs is a non-empty set.


\begin{cor}
\label{corDual}
The following hold for $A(D)$,
\begin{enumerate}
\item The value $n_0^2$ is a numerical multiplier of $D$. That is, $D^{(n_0^2)} = D$.
\item The character table of $A(D)$ is given by,
\begin{eqnarray}
\nonumber
 [1] &=& e_{\chi_0} + e_{D_1} + e_{D_2}\\
\nonumber
 D(x) &=& k e_{\chi_0} + \chi_1(D) e_{D_1} + \chi_1(D^{(n_0)})  e_{D_2}\\
\nonumber
 D(x^{n_0}) &=& k e_{\chi_0} + \chi_1(D^{(n_0)}) e_{D_1} + \chi_1(D) e_{D_2}
\end{eqnarray} 
Where $\chi_i \in D_i$, $D_i \subset \overline{G}$, $\{e_{\chi_0}, e_{D_1}, e_{D_2} \} $ are the primitive idempotents of $A(D)$, and $\chi_0$ is the principal character.
\item The sets $D_1,D_2$ have size $k$, and $D_2 = D_1^{(n_0)}$.
\item The set $\{\chi_1(D), \chi_1(D^{(n_0)}) \}$ is the set of distinct character values of $D$.
\item The set $D_1$ is a GSHDS. 
\item The set $A(D_1)$ is the dual association scheme of $A(D)$, and $A(D_1)$ is isomorphic to $A(D)$.
\end{enumerate}
\end{cor}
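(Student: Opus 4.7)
The plan for part (1) is to apply the ring endomorphism $\sigma : \Z[G] \to \Z[G]$ induced by $x \mapsto x^{n_0}$ to the skew condition (\ref{GSHDSeqn_2_2}). Since $n_0$ is a unit modulo $\exp(G)$, $\sigma(G(x)) = G(x)$ and $\sigma([1]) = [1]$, so $D(x^{n_0}) + D(x^{n_0^2}) = G(x) - [1]$. Subtracting (\ref{GSHDSeqn_2_2}) then gives $D(x^{n_0^2}) = D(x)$, i.e.\ $D^{(n_0^2)} = D$.

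For parts (2)--(4), I would use that by Proposition \ref{prop2_2}, $A(D)$ is a $3$-dimensional commutative association scheme. Acting on $\C[G]$ by multiplication, its three primitive idempotents $e_{\chi_0}, e_{D_1}, e_{D_2}$ simultaneously diagonalize $[1], D(x), D(x^{n_0})$. The principal character gives eigenvalue $k$ on $D(x)$. Reading off $D(x)D(x)$ from the proof of Proposition \ref{prop2_2}, every nonprincipal character $\chi$ has $\chi(D)$ satisfying a fixed integer quadratic with two distinct roots $\alpha_1, \alpha_2$, and the skew condition forces $\alpha_1 + \alpha_2 = -1$. Set $D_i = \{\chi \neq \chi_0 : \chi(D) = \alpha_i\}$. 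The spectral decomposition then yields the table in part (2), and part (4) is immediate with $\chi_1(D) = \alpha_1$, $\chi_1(D^{(n_0)}) = \chi_1^{n_0}(D) = \alpha_2$. For part (3), the map $\chi \mapsto \chi^{n_0}$ is an automorphism of $\overline{G}$ and sends $D_1$ to $D_2$: if $\chi \in D_1$ then $\chi^{n_0}(D) = \chi(D^{(n_0)}) = -1 - \alpha_1 = \alpha_2$. This is a bijection, so $|D_1| = |D_2|$; combined with $1 + |D_1| + |D_2| = v$ we get $|D_1| = |D_2| = k$, and the bijection itself is the identity $D_2 = D_1^{(n_0)}$ in $\overline{G}$.

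For parts (5) and (6), I would invoke the duality of commutative association schemes: the primitive idempotents $e_{\chi_0}, e_{D_1}, e_{D_2}$ of $A(D)$ give rise (via the Bose--Mesner identification under Pontryagin duality) to a dual scheme on $\overline{G}$ whose adjacency classes are $\{\chi_0\}, D_1, D_2$, and this scheme is exactly $A(D_1)$. The skew condition for $D_1$ in $\overline{G}$ is immediate from $D_1 \sqcup D_2 = \overline{G} \setminus \{\chi_0\}$ and $D_2 = D_1^{(n_0)}$ (part (3)), where $n_0$ is viewed as an automorphism of $\overline{G}$. The GSHDS-type product relation for $D_1(\chi)D_1(\chi^{n_0})$ is extracted from the Krein parameters of $A(D)$, which by the symmetry in parts (2)--(4) have the same Paley-/Hadamard-type shape as the intersection numbers of $A(D)$. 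This identifies $A(D_1)$ as the dual scheme of $A(D)$ and, up to relabeling the two nontrivial classes, yields the isomorphism $A(D_1) \cong A(D)$.

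The main obstacle will be part (5), specifically pinning down the integral product relation on $D_1$ in the correct GSHDS shape rather than as a formal Krein identity. If the abstract appeal to Krein parameters feels slippery, an alternative route is to compute directly via Fourier inversion on $\overline{G}$: the spectral decomposition of part (2) gives, for $g \in G$ viewed as a character of $\overline{G}$, the coefficient formula $[g \in D] = v^{-1}\bigl(k + \alpha_1\,\overline{g(D_1)} + \alpha_2\,\overline{g(D_2)}\bigr)$, and evaluating $|g(D_1)|^2$ through this relation (together with $g(D_2) = -1 - g(D_1)$ for $g \neq 0$) reproduces the GSHDS equations on $D_1$ with parameters mirroring those of $D$.
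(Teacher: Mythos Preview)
Your proposal is correct and tracks the paper's overall structure: part (1) via substitution in the skew condition, parts (2)--(4) via the spectral decomposition of $A(D)$ together with the $n_0$-action on characters, and parts (5)--(6) via association-scheme duality. The execution differs mainly in parts (2)--(4). The paper begins with the abstract primitive idempotents $e_{D_1}, e_{D_2}$ of the three-dimensional algebra $A(D)$, argues by contradiction that the algebra automorphism induced by $n_0$ must swap them (else it would be the identity and violate the skew condition), and then runs a separate Galois-theoretic argument in $\Q(\eta_{\exp(G)})$ to establish $\chi_1(D) \neq \chi_1(D^{(n_0)})$. Your route is more direct: you define $D_1, D_2$ from the outset as the level sets of $\chi \mapsto \chi(D)$ over the two roots of the quadratic coming from Proposition~\ref{prop2_2}, and then the map $\chi \mapsto \chi^{n_0}$ together with $\alpha_1 + \alpha_2 = -1$ gives $D_2 = D_1^{(n_0)}$, the equality $|D_1| = |D_2| = k$, and the distinctness of the two character values all at once. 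This is cleaner and sidesteps the paper's Galois detour. For parts (5)--(6) the two arguments coincide: the paper simply says the dual scheme $A(D_1)$ has the same character table as $A(D)$ and hence the same multiplication table, which is precisely the Krein-parameter identification you invoke, and your Fourier-inversion backup is a sound concrete alternative to the abstract appeal.
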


\begin{proof}
Clearly, because $D(x) + D(x^{n_0}) = G(x) - [1]$, we must have $ D(x^{n_0}) + D(x^{n_0^2}) = G(x)-[1]$. Therefore, $D^{(n_0^2)} = D$. Thus, part 1 follows.

Since $A(D)$ is an association scheme, $A(D)$ has three primitive idempotents. Hence, there exist $D_1, D_2 \subset \overline{G}$ such that,
\begin{eqnarray}
\nonumber
 [1] &=& e_{\chi_0} + e_{D_1} + e_{D_2}\\
\nonumber
 D(x) &=& k e_{\chi_0} + a e_{D_1} + be_{D_2}\\
\nonumber
 D(x^{n_0}) &=& k e_{\chi_0} + c e_{D_1} + d e_{D_2}
\end{eqnarray} 

Where,
\begin{eqnarray}
\nonumber
 e_{D_i} &=& \sum \{e_{\chi} \mid \chi \in D_i \},\\
\nonumber
 e_{\chi} &=& \frac{1}{|G|} \sum_{g \in G } \overline{\chi(g)} x^g,
\end{eqnarray}
and $a = \chi_1(D)$, $b = \chi_2(D)$, $c= \chi_1(D^{(n_0)})$, and $d = \chi_2(D^{(n_0)})$ where $\chi_i \in D_i$.
In the standard literature of character theory, $e_{\chi}$ is the primitive idempotent corresponding to $\chi$ in $\C[G]$.

Similarly, we can express the primitive idempotents in terms of the basis $\{ [1], D(x), D(x^{n_0}) \}$.

\begin{eqnarray}
 \nonumber
  v e_{\chi_0} &=& [1] + D(x) + D(x^{n_0}) \\
 \nonumber
  v e_{D_1} &=& |D_1| [1] + \overline{a} D(x) + \overline{c} D(x^{n_0}) \\
  \nonumber
  v e_{D_2} &=& |D_2| [1] + \overline{b} D(x) + \overline{d} D(x^{n_0})
\end{eqnarray}

Clearly, from the previous equations $e_{D_i}^{(n_0)} \in A(D)$. Also, note that $e_{D_1}^{(n_0)} = e_{D_2}$, else the algebra isomorphism of $A(D)$ induced by $n_0$ will be the identity isomorphism which will contradict
the skew condition for $D$ (Equation (\ref{GSHDSeqn_2_2})). Thus, $D_1^{(n_0)} = D_2$ and it follows that $|D_2| = |D_1| = k$. 
Since $D_2 = D_1^{(n_0)}$, it follows that $b= \chi_1 ( D^{(n_0)} )$, $d = \chi_1 ( D)$, and we can choose $\chi_2$ such that $\chi_2 = \mu_{n_0} \cdot \chi_1$ . Where $\mu_{n_0} \in Aut(G)$ is defined by $\mu_{n_0}(g) = n_0 \cdot g$ is the induced automorphism of $G$ by $n_0$ and $\mu_{n_0} \cdot \chi = \chi \circ \mu_{n_0}^{-1}$ is the induced action of $Aut(G)$ on the characters of $G$. This shows parts 2 and 3.

Now we show that $\chi_1(D) \neq \chi_1(D^{(n_0)})$. If $\chi_1(D) = \chi_1(D^{(n_0)})$,  then $v e_{D_1} = k [1] + \overline{\chi_1(D)} ( G(x) - [1])$. Note that, since $\chi_1(D)$ is an
algebraic integer in the ring of integers of $\Q(\eta_{exp(G)})$, it makes sense to talk about the image of $\chi_1(D)$ under the action
of the Galois automorphism $\mu_{n_0}$ of $\Q(\eta_{exp(G)})$ induced by $n_0$. That is, the value $\chi_1(D)^{(n_0)}$. Clearly, by a direct
calculation we can show that $\chi_1(D)^{(n_0)} = \chi_1(D^{(n_0)})$. 
Consider, $\chi_2(D) = \mu_{n_0} \cdot \chi_1 (D) = \mu_{n_0^{-1} * n_0^2} \cdot \chi_1(D) = \mu_{n_0^{-1}} \cdot \chi_1(D^{(n_0^{-2})}) = \mu_{n_0^{-1}} \cdot \chi_1(D^{(n_0^{-2})}) = \mu_{n_0^{-1}} \cdot \chi_1(D)$ as $n_0^2$ is a numerical multiplier of $D$. Hence, $\chi_2(D) = \mu_{n_0^{-1}} \cdot \chi_1(D) =
\chi_1(D^{(n_0)}) = \chi_1(D)$. Now, by applying the Galois automorphism $\mu_{n_0}$ of $\Q(\eta_{exp(G)})$ to the equation $\chi_2(D) = \chi_1(D)$, we get that, $\chi_1(D^{(n_0)}) = \chi_1(D)^{(n_0)} = \chi_2(D)^{(n_0)} = \chi_2(D^{(n_0)})$. Hence, $\chi_2(D^{(n_0)}) = \chi_1(D^{(n_0)}) = \chi_1(D) = \chi_2(D)$,
Thus, $v e_{D_2} = k[1] + \overline{\chi_2(D)} ( G(x) - [1])$ and since $\chi_2(D) = \chi_1(D)$, we have that $v e_{D_2} = v e_{D_1}$. 
 which contradicts the primitivity of the $e_{D_i}$'s. Thus, $\chi_1(D), \chi_1(D^{(n_0)})$ are the distinct
character values of $D$. This shows part 4. 

By the standard theory of association schemes, the set
\begin{eqnarray}
\nonumber
 A(D_1) &=& \LRGen{\chi_0, D_1, D_2 = D_1^{(n_0)} }, 
\end{eqnarray}
is the dual association scheme of $A(D)$. Note that, by construction, both $A(D_1)$ and $A(D)$ have the same
character table. Thus, $A(D_1)$ and $A(D)$ are isomorphic. Thus, $A(D_1)$ and $A(D)$ have the same multiplication tables. In particular, $D_1$ is a GSHDS. This shows parts 5 and 6.

\end{proof}

The set $D_1$ of Corollary (\ref{corDual}) will be denoted by $\overline{D}$, the dual GSHDS of $D$. We will show in Proposition (\ref{PropCharValue}), that
\begin{eqnarray}
\nonumber
\chi(D) &=& \frac{-1 + \epsilon_{\chi} \sqrt{\QRSym{-1}v}}{2} 
\end{eqnarray}
Where $\epsilon_{\chi} \in \{ 1, -1 \}$. Hence, we make the convention that,
\begin{eqnarray}
\nonumber
 D_1 &=& \{ \chi \in \overline{G} \mid \chi(D) = \frac{-1 +\sqrt{\QRSym{-1} v}}{2} \}
\end{eqnarray}
It is important to note by the duality between the association schemes $A(D)$ and $A(\overline{D})$, that the dual GSHDS of $\overline{D}$ is $D$ itself.

\begin{cor}
Let $D$ be a GSHDS in $G$, then the dual of the dual of $D$ is $D$ itself. That is, $\overline{\overline{D}} = D$.
\end{cor}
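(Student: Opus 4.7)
The plan is to prove this via Pontryagin duality: identify $\overline{\overline{G}}$ with $G$ through $g \mapsto \hat g$ where $\hat g(\chi) = \chi(g)$, and show that under this identification $\overline{\overline{D}}$ is precisely the image of $D$. By the convention immediately fixing the dual, this reduces to computing $\hat g(\overline{D}) = \sum_{\chi \in D_1} \chi(g)$ for each $g \in G$ and determining which values of $g$ yield the distinguished value $\frac{-1+\sqrt{\QRSym{-1}v}}{2}$.

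The computation is most efficient via the dual-basis formula for $e_{D_1}$ established in the proof of Corollary \ref{corDual},
\begin{eqnarray}
\nonumber
v \, e_{D_1} &=& k [1] + \overline{\chi_1(D)}\, D(x) + \overline{\chi_1(D^{(n_0)})}\, D(x^{n_0}).
\end{eqnarray}
Since $e_\chi = \frac{1}{v}\sum_g \overline{\chi(g)}\,x^g$, the coefficient of $x^g$ on the left equals $\sum_{\chi \in D_1}\overline{\chi(g)} = \overline{\hat g(\overline{D})}$. The skew condition $D(x) + D(x^{n_0}) = G(x) - [1]$ forces every nonzero $g$ to lie in exactly one of $D$ or $D^{(n_0)}$ and forces $0$ to lie in neither, so reading off the coefficient of $x^g$ on the right side and conjugating gives
\begin{eqnarray}
\nonumber
\hat g(\overline{D}) &=& \begin{cases} k & g=0,\\ \chi_1(D) & g \in D,\\ \chi_1(D^{(n_0)}) & g \in D^{(n_0)}. \end{cases}
\end{eqnarray}

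To conclude, the convention defines $\overline{\overline{D}} = \{\hat g \in \overline{\overline{G}} \mid \hat g(\overline{D}) = \frac{-1+\sqrt{\QRSym{-1}|\overline{G}|}}{2}\}$. Since $|\overline{G}| = v$, this distinguished value equals exactly $\chi_1(D)$, whereas $\chi_1(D^{(n_0)}) = -1 - \chi_1(D)$ sits on the opposite branch (a fact obtained by applying $\chi_1$ to the skew condition, not by an independent square-root choice). Hence $\hat g \in \overline{\overline{D}}$ if and only if $g \in D$, which yields $\overline{\overline{D}} = D$ under Pontryagin duality. The only real obstacle is bookkeeping of sign conventions: one must keep $\chi_1(D)$ consistently on the branch chosen in the convention preceding the corollary and use the linear identity $\chi_1(D) + \chi_1(D^{(n_0)}) = -1$ rather than select square-roots independently at different stages of the argument.
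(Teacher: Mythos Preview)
Your proof is correct. The computation via the coefficient of $x^g$ in $v\,e_{D_1}$ is exactly right, and your handling of the sign convention is the only delicate point, which you address properly by using the linear relation $\chi_1(D)+\chi_1(D^{(n_0)})=-1$ rather than making an independent square-root choice.

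The paper, however, does not argue this way at all: it simply states the corollary as an immediate consequence of the general duality theory of association schemes, namely that the dual of the dual scheme is canonically the original scheme, so that taking the dual GSHDS twice recovers $D$. Your approach is a direct, hands-on verification via Pontryagin duality and the explicit idempotent formula from the proof of Corollary~\ref{corDual}; it is more elementary in that it avoids appealing to abstract association-scheme duality, and it makes the role of the sign convention for $D_1$ completely transparent. The paper's one-line invocation is shorter but assumes the reader knows that the dual-of-dual identification in association schemes respects the particular labeling convention adopted just before the corollary; your argument actually checks this.
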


\begin{cor}
Let $D$ be a GSHDS in $G$, and $\chi, \psi$ be non-principal characters of $G$. Let 
$\Omega(\chi) = \{ \chi(D), \chi(D^{(n_0)}) \}$ and $\Omega(\psi)$ be defined similarly. Then,
$\Omega(\chi) = \Omega(\psi)$ as sets. 
\end{cor}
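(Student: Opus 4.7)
The plan is to show that $\Omega(\chi)$ equals the same fixed two-element set $\{\chi_1(D),\chi_1(D^{(n_0)})\}$ for every non-principal character $\chi$, whence equality of $\Omega(\chi)$ and $\Omega(\psi)$ becomes automatic. The argument is essentially a repackaging of Corollary \ref{corDual}: part 4 already singles out $\chi_1(D)$ and $\chi_1(D^{(n_0)})$ as the only non-principal character values of $D$, so the only work is to verify that these two values appear jointly as a pair of the form $(\chi(D),\chi(D^{(n_0)}))$ for each non-principal $\chi$.

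First, I would use the character table in Corollary \ref{corDual} to pin down $\chi(D)$. Since $A(D)$ is a $3$-dimensional association scheme with primitive idempotents $\{e_{\chi_0}, e_{D_1}, e_{D_2}\}$, every non-principal character $\chi$ lies in exactly one of $D_1$ or $D_2$. Applying such a $\chi$ to the expansion $D(x) = k\, e_{\chi_0} + \chi_1(D)\, e_{D_1} + \chi_1(D^{(n_0)})\, e_{D_2}$ and using that $\chi(e_{\chi_0})=0$ while $\chi$ evaluates to $1$ on its own idempotent block and to $0$ on the other, yields $\chi(D) = \chi_1(D)$ when $\chi \in D_1$ and $\chi(D) = \chi_1(D^{(n_0)})$ when $\chi \in D_2$.

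Next, I would compute $\chi(D^{(n_0)})$ via the Galois-action identity $\chi(D^{(n_0)}) = \chi(D)^{(n_0)}$ already used inside the proof of Corollary \ref{corDual}, where the exponent on the right is the Galois automorphism of $\Q(\eta_{exp(G)})$ induced by $n_0$. In the case $\chi \in D_1$ this gives $\chi(D^{(n_0)}) = \chi_1(D)^{(n_0)} = \chi_1(D^{(n_0)})$. In the case $\chi \in D_2$ it gives $\chi(D^{(n_0)}) = \chi_1(D^{(n_0)})^{(n_0)} = \chi_1(D^{(n_0^2)}) = \chi_1(D)$, invoking part 1 of Corollary \ref{corDual} that $n_0^2$ is a numerical multiplier of $D$. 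In either case, $\Omega(\chi) = \{\chi_1(D), \chi_1(D^{(n_0)})\}$, a fixed two-element set whose entries are distinct by part 4, so $\Omega(\chi) = \Omega(\psi)$ for any non-principal $\chi$ and $\psi$.

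There is no real technical obstacle here; the statement is essentially a corollary of the character-table analysis already performed. The only bookkeeping point to watch is keeping straight the distinction between the Galois conjugate $\chi_1(D)^{(n_0)}$ and the character value $\chi_1(D^{(n_0)})$ of the $n_0$-shifted set, an identification which was already justified inside the proof of Corollary \ref{corDual} and which makes the two cases above collapse to the same pair.
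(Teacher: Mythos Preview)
Your proof is correct and follows essentially the same approach as the paper: both arguments use the partition of non-principal characters into $D_1 \cup D_2$ from Corollary~\ref{corDual} and verify $\Omega(\chi)=\{\chi_1(D),\chi_1(D^{(n_0)})\}$ case by case. The only cosmetic difference is that the paper handles the case $\chi\in D_2$ by writing $\chi=n_0\cdot\chi_1$ and using $D^{(n_0^{-1})}=D^{(n_0)}$, whereas you read $\chi(D)$ off the character table and then apply the Galois automorphism to recover $\chi(D^{(n_0)})$; in fact you could shorten your argument further by reading $\chi(D^{(n_0)})$ directly from the third line of the character table in Corollary~\ref{corDual}, avoiding the Galois step entirely.
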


\begin{proof}
It suffices to show that for arbitrary non-principal character $\chi$, $\Omega(\chi) = \Omega(\chi_1)$ where $\chi_1 \in D_1$. Clearly, if $\chi \in D_1$, we must have
$\Omega(\chi) = \Omega(\chi_1)$. Thus, assume that $\chi \in D_2 = D_1^{(n_0)}$. Then, $\chi = n_0 \cdot \chi_1$. Thus, $\Omega(n_0 \cdot \chi_1) = \{ \chi_1( D^{n_0^{-1}}), \chi_1(D) \} = \Omega(\chi_1)$ because
$D^{(n_0^{-1})} = D^{(n_0)}$.
\end{proof}

\begin{cor}
\label{cor_1}
Let $D$ be a GSHDS in $G$ and $\chi$ a non-principal character of $G$. Then,

\begin{enumerate}
 \item If $k_0 = 0$, then,
  \begin{eqnarray}
    \nonumber
    \chi(D) &=& \frac{-1 \pm \sqrt{v}}{2} 
  \end{eqnarray} 
  \item If $k_0 = k$, then,
  \begin{eqnarray}
    \nonumber
    \chi(D) &=& \frac{-1 \pm \sqrt{-v}}{2} 
  \end{eqnarray} 
\end{enumerate}
\end{cor}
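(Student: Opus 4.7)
The plan is to exploit the fact that applying any nonprincipal character $\chi$ to the two defining identities of a GSHDS directly produces a quadratic equation whose roots are $\chi(D)$ and $\chi(D^{(n_0)})$. The corollary then drops out by the quadratic formula, after substituting the parameter $\lambda$ from Proposition \ref{prop2_2} in each of the two cases.

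First, I would apply $\chi$ to the skew condition $D(x)+D(x^{n_0})=G(x)-[1]$. Since $\chi$ is nonprincipal, $\chi(G(x))=0$, so
\begin{eqnarray}
\nonumber
\chi(D) + \chi(D^{(n_0)}) &=& -1.
\end{eqnarray}
Next, apply $\chi$ to Equation (\ref{gshds_eq1}), namely $D(x)D(x^{n_0}) = k_0[1] + \lambda D(x) + \lambda D(x^{n_0})$. This gives
\begin{eqnarray}
\nonumber
\chi(D)\chi(D^{(n_0)}) &=& k_0 + \lambda\bigl(\chi(D)+\chi(D^{(n_0)})\bigr) \;=\; k_0 - \lambda.
\end{eqnarray}
Hence $\chi(D)$ and $\chi(D^{(n_0)})$ are the two roots of $t^2 + t + (k_0 - \lambda) = 0$, and the quadratic formula yields
\begin{eqnarray}
\nonumber
\chi(D) &=& \frac{-1 \pm \sqrt{1 - 4(k_0 - \lambda)}}{2}.
\end{eqnarray}

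Finally, I would compute the discriminant in each of the two cases identified in Proposition \ref{prop2_2}. If $k_0 = 0$, then $\lambda = \frac{v-1}{4}$, so the discriminant equals $1 + 4\lambda = v$, giving $\chi(D) = \frac{-1 \pm \sqrt{v}}{2}$. If $k_0 = k = \frac{v-1}{2}$, then $\lambda = \frac{v-3}{4}$, and a short arithmetic check gives discriminant $1 - 2(v-1) + (v-3) = -v$, yielding $\chi(D) = \frac{-1 \pm \sqrt{-v}}{2}$.

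There is essentially no obstacle here; the only thing to be careful about is making sure that the identity (\ref{gshds_eq1}) (rather than the raw definition involving $G(x)$) is what is used, since the skew condition is the mechanism that replaces the $\lambda G(x)$ term by the combination $\lambda D(x) + \lambda D(x^{n_0}) + \lambda[1]$ and turns the computation into a genuine quadratic relation between $\chi(D)$ and $\chi(D^{(n_0)})$.
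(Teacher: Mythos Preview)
Your proof is correct and essentially the same as the paper's: both reduce to showing that $\chi(D)$ satisfies the quadratic $X^2+X+(k_0-\lambda)=0$ and then plug in the values of $\lambda$ from Proposition~\ref{prop2_2}. The only cosmetic difference is that the paper derives this quadratic directly from Equation~(\ref{gshds_eq2}) (the expansion of $D(x)^2$), whereas you obtain it via Vieta by reading off the sum from the skew condition and the product from Equation~(\ref{gshds_eq1}); the two routes are interchangeable.
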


\begin{proof}
 Clearly, Equation (\ref{gshds_eq2}) yields,
 \begin{eqnarray}
 \nonumber
   D(x)D(x)  + D(x) + (k_0 -\lambda) [1] &=& (k - \lambda) G(x)
 \end{eqnarray}
 Thus, $\chi(D)$ is a root of the equation:
 \begin{eqnarray}
   \nonumber
   X^2 + X + (k_0 - \lambda) &=& 0
 \end{eqnarray}
 The result follows from the values of $\lambda$ deduced by the previous proposition.
\end{proof}

\begin{lem}
\label{lem_1}
Let $D$ be a GSHDS in $G$. Then, the order of $G$ cannot be a square.
\end{lem}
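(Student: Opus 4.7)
The plan is to use Corollary \ref{cor_1} to pin down every non-principal character value of $D$, and to derive a contradiction in each of the two cases $k_0 = 0$ and $k_0 = k$ identified in Proposition \ref{prop2_2}, under the assumption $v = m^2$.

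The SHDS case ($k_0 = k$) is quick. Corollary \ref{cor_1} gives $\chi(D) = \frac{-1 \pm \sqrt{-v}}{2}$, which becomes $\frac{-1 \pm m\sqrt{-1}}{2}$ when $v = m^2$. Its minimal polynomial over $\Q$ is $X^2 + X + \frac{1+m^2}{4}$, so for $\chi(D)$ to be an algebraic integer one would need $4 \mid 1 + m^2$, i.e.\ $m^2 \equiv 3 \pmod 4$. Since perfect squares are $\equiv 0, 1 \pmod 4$, this is impossible, contradicting the fact that $\chi(D)$ — a $\Z$-linear combination of roots of unity — must be an algebraic integer.

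For the Paley case ($k_0 = 0$), Corollary \ref{cor_1} gives $\chi(D) = \frac{-1 \pm m}{2} \in \Q$, and the algebraic integer property, combined with $\chi_0(D) = k \in \Z$, puts every character value of $D$ in $\Z$. I would then invoke the Galois identity
\begin{eqnarray}
\nonumber
\sigma_a(\chi(D)) &=& \chi(D^{(a)}),
\end{eqnarray}
valid for every $a \in (\Z/n\Z)^*$, where $n = exp(G)$ and $\sigma_a \in \text{Gal}(\Q(\eta_n)/\Q)$ is the automorphism $\eta_n \mapsto \eta_n^a$. Rationality of $\chi(D)$ forces the left-hand side to equal $\chi(D)$, so $\chi(D^{(a)}) = \chi(D)$ for every $\chi$ and every such $a$. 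By character inversion this gives $D^{(a)} = D$ in $\Z[G]$ for every $a$ coprime to $n$; taking $a = n_0$ in the skew condition of Definition \ref{GSHDS_def_2_1} then yields $2 D(x) = G(x) - [1]$, impossible because $D$ has $\{0,1\}$ coefficients.

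Neither case presents a real obstacle; the only step requiring any thought is the Galois identity, which is immediate from $\sigma_a(\chi(g)) = \chi(g)^a = \chi(a \cdot g)$ together with the $\Z$-integrality of the coefficients of $D(x) \in \Z[G]$.
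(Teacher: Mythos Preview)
Your proof is correct. The $k_0 = 0$ (Paley) case is essentially identical to the paper's: both observe that $\chi(D)$ is a rational algebraic integer, hence Galois-invariant, so $\chi(D^{(n_0)}) = \chi(D)$ for all $\chi$ and Fourier inversion forces $D = D^{(n_0)}$, contradicting the skew condition.

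The $k_0 = k$ (SHDS) case is where the two arguments diverge. You argue directly that $\chi(D) = \frac{-1 \pm m\sqrt{-1}}{2}$ fails to be an algebraic integer, since its minimal polynomial $X^2 + X + \frac{1+m^2}{4}$ would require $m^2 \equiv 3 \pmod 4$. The paper instead reduces this case to the first one: it notes $\chi(D) \in \Q(\eta_4) \cap \Q(\eta_l)$ where $l$ is the order of $\chi$, and since $l \mid v$ is odd, $\gcd(l,4)=1$ and the intersection is $\Q$; then the same Galois/Fourier contradiction applies. Your route is more self-contained and avoids the cyclotomic intersection lemma, while the paper's has the virtue of funneling both cases through a single contradiction with the skew condition. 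Either way the lemma follows.
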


\begin{proof}
Let $|G|= v$. Assume otherwise. Consider the case $k_0 = 0$. Then, clearly by the last Corollary (\ref{cor_1}), $\chi(D)$ is a rational algebraic integer. In particular, $\chi(D)$ is invariant
under all Galois automorphisms of $\Q(\eta_{exp(G)})$. Thus, 
\begin{eqnarray}
 \nonumber
 \chi(D)^{(n_0)} &=& \chi(D)
\end{eqnarray}
Since $\chi(D)^{(n_0)} = \chi(D(x^{n_0}))$, we must have by Fourier inversion that $D(x) = D(x^{n_0})$. This is a contradiction of the skew condition of $D$.

Now, consider the case $k_0 = k$. Then, clearly by the last Corollary (\ref{cor_1}), $\chi(D) \in \Q(\sqrt{-1}) = \Q(\eta_4)$. Let $l$ be the order of $\chi$. Clearly, $\chi(D) \in \Q(\eta_l)$. Thus, $\chi(D) \in \Q(\eta_4) \cap \Q(\eta_l) = \Q(\eta_{gcd(l,4)})$. Note that $v = 1$ or $3$ mod $4$. Thus, $gcd(l,4) = 1$. Hence, $\chi(D) \in \Q$. Therefore, for arbitrary non-principal character $\chi$, $\chi(D)$ is a rational algebraic integer. A similar argument as given by the previous case shows that this violates the skew condition of $D$. 

\end{proof}

\begin{cor}
Let $D$ be a GSHDS in $G$. Then, $|G| = p^{2\alpha +1}$ for some odd prime $p$.
\end{cor}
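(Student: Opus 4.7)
The plan is to combine Lemma~\ref{lem_1} with the character value formula from Corollary~\ref{cor_1} and the classical determination of the unique quadratic subfield of $\Q(\eta_q)$ for $q$ an odd prime. First I would observe that $v$ is odd: Proposition~\ref{prop2_2} forces $\lambda$ to equal $(v-3)/4$ or $(v-5)/4$, so $v\equiv 1$ or $3\pmod 4$. Combined with Lemma~\ref{lem_1}, $v$ is then an odd positive integer that is not a perfect square, so its squarefree part is a nontrivial product of odd primes.

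Next, fix any odd prime $q\mid v$. Since $G$ is a finite abelian group with $q\mid |G|$, Cauchy's theorem applied to the (isomorphic) dual group $\widehat{G}$ produces a character $\chi$ of $G$ of exact order $q$; all its values are $q$-th roots of unity, so $\chi(D)\in\Q(\eta_q)$. By Corollary~\ref{cor_1}, $\chi(D)=\frac{-1\pm\sqrt{\epsilon_0 v}}{2}$ with $\epsilon_0=+1$ when $k_0=0$ and $\epsilon_0=-1$ when $k_0=k$, so $\sqrt{\epsilon_0 v}\in\Q(\eta_q)$. I would then invoke the classical fact that the unique quadratic subfield of $\Q(\eta_q)$ is $\Q(\sqrt{q^{\ast}})$ with $q^{\ast}=(-1)^{(q-1)/2}q$. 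Hence $\epsilon_0 v\cdot q^{\ast}$ is a rational square (the alternative, that $\epsilon_0 v$ is itself a rational square, would contradict Lemma~\ref{lem_1} when $\epsilon_0=1$ and is impossible for $\epsilon_0=-1$ since $v>0$).

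From there I would compare $p$-adic valuations on both sides of $\epsilon_0 v\cdot q^{\ast}=(\text{square})$: the valuation at $q$ must be odd and the valuation at every other prime must be even. Consequently $v_q(v)$ is odd while $v_p(v)$ is even for every prime $p\neq q$ dividing $v$. If two distinct odd primes $q_1\neq q_2$ both divided $v$, running this argument with $q=q_1$ would give $v_{q_2}(v)$ even, while running it with $q=q_2$ would give $v_{q_2}(v)$ odd --- a contradiction. Hence $v$ is a power of a single odd prime $p$, and Lemma~\ref{lem_1} forces that power to be odd. Writing the exponent as $2\alpha+1$ gives $v=p^{2\alpha+1}$.

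The only external ingredient in the whole argument is the identification of the quadratic subfield of $\Q(\eta_q)$, a standard consequence of the evaluation of the quadratic Gauss sum; everything else is bookkeeping on top of Corollary~\ref{cor_1} and Lemma~\ref{lem_1}, so I do not expect any real obstacle beyond keeping the two cases $\epsilon_0=\pm 1$ straight.
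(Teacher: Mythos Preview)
Your argument is correct and takes a genuinely different route from the paper's. The paper exploits the association-scheme structure developed around Corollary~\ref{corDual}: the unordered pair $\{\chi(D),\chi(D^{(n_0)})\}$ is the same for every nonprincipal character $\chi$, so picking characters of two distinct prime orders $p$ and $s$ forces this common pair into $\Q(\eta_p)\cap\Q(\eta_s)=\Q$, and rational character values then violate the skew condition exactly as in Lemma~\ref{lem_1}. You instead work one prime at a time: from $\sqrt{\epsilon_0 v}\in\Q(\eta_q)$ and the identification of the unique quadratic subfield of $\Q(\eta_q)$ you extract parity constraints on the prime valuations of $v$, and two distinct prime divisors yield incompatible parities. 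The paper's route needs only the trivial intersection of coprime cyclotomic fields but leans on the earlier association-scheme corollary; yours bypasses that corollary entirely at the price of invoking the quadratic Gauss sum. One cosmetic point: the value $(v-5)/4$ you cite is a partial-difference-set parameter from Proposition~\ref{prop2_2} rather than the GSHDS $\lambda$ (which is $(v-1)/4$ in that case), but integrality of either forces $v\equiv 1\pmod 4$, so your conclusion that $v$ is odd is unaffected.
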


\begin{proof}
It suffices to show that $v$ cannot be composite. The result will follow by the previous Lemma (\ref{lem_1}).
Assume otherwise, that is, that there are two primes $p$ and $s$ that divide $v$. Let $\chi$ be a character
of order $p$ and $\psi$ a character of order $s$. Since $A(D)$ is a association scheme, we must have:
\begin{eqnarray}
 \nonumber
  \Omega &=& \{ \chi(D), \chi(D^{(n_0)} \} \\
  \nonumber
     &=& \{ \psi(D), \psi(D^{(n_0)}) \}
\end{eqnarray}

Clearly, $\Omega \subset \Q(\eta_p) \cap \Q(\eta_s) = \Q(\eta_{gcd(p,s)}) = \Q$.
Thus, $\Omega$ is a set of rational algebraic integers. Clearly, because $A(D)$ is an association scheme,
for an arbitrary non-principal character $\phi$, we must have,
\begin{eqnarray}
 \nonumber
 \{ \phi(D), \phi(D^{(n_0)}) \} &=& \Omega
\end{eqnarray}
Thus, for arbitrary non-principal character $\phi$, $\phi(D)$ is a rational algebraic integer. By using a similar argument as Lemma (\ref{lem_1}), we conclude 
that this contradicts the skew condition of $D$.
\end{proof}

\begin{prop}
\label{PropCharValue}
Let $D$ be a GSHDS in $G$, $|G| =v = p^{2\alpha + 1}$, and $exp(G) = p^s$. Then, 
\begin{enumerate}
\item For an arbitrary non-principal character $\chi$ of $G$, we have,
\begin{eqnarray}
 \nonumber
 \chi(D) &=& \frac{-1 + \epsilon_{\chi} \sqrt{ \QRSym{-1} v } }{2}
\end{eqnarray}
 Where $\QRSym{\cdot}$ is the quadratic residue symbol, and $\epsilon_{\chi} \in \{ -1, 1\}$. 
\item The numerical multiplier group of $D$ is given by the quadratic residues mod $p^s$. 
\end{enumerate}
\end{prop}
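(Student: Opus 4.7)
The plan is to strengthen Corollary~\ref{cor_1} by determining which of its two cases can actually occur in terms of the residue class of $p$ modulo $4$, and then to deduce the numerical multiplier group from the Galois action of $(\Z/p^s\Z)^*$ on character values.

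For Part 1, the key observation is that $\chi(D)$ is an algebraic integer in $\Q(\eta_{p^s})$, since $\chi$ takes values in $p^s$-th roots of unity. Writing $v = p^{2\alpha+1}$, we have $\sqrt{\pm v} = p^\alpha\sqrt{\pm p}$, so the formulas in Corollary~\ref{cor_1} force either $\sqrt{p} \in \Q(\eta_{p^s})$ (in the case $k_0 = 0$) or $\sqrt{-p} \in \Q(\eta_{p^s})$ (in the case $k_0 = k$). The classical evaluation of the quadratic Gauss sum shows that the unique quadratic subfield of $\Q(\eta_{p^s})$ is $\Q\bigl(\sqrt{\QRSym{-1}p}\bigr)$. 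Hence $k_0 = 0$ requires $p = 1$ mod $4$ and $k_0 = k$ requires $p = 3$ mod $4$, and in either case the two expressions of Corollary~\ref{cor_1} merge into
\begin{eqnarray}
\nonumber
\chi(D) &=& \frac{-1 + \epsilon_\chi \sqrt{\QRSym{-1}\, v}}{2}, \qquad \epsilon_\chi \in \{-1, 1\}.
\end{eqnarray}

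For Part 2, fix $t$ coprime to $p$ and let $\mu_t$ denote the Galois automorphism of $\Q(\eta_{p^s})$ sending $\eta_{p^s} \mapsto \eta_{p^s}^t$. A direct expansion gives $\mu_t(\chi(D)) = \chi(D^{(t)})$, while the Gauss sum formula yields $\mu_t(\sqrt{\QRSym{-1}p}) = \QRSym{t}\sqrt{\QRSym{-1}p}$. Combining these with Part 1: if $\QRSym{t} = 1$, then $\chi(D^{(t)}) = \chi(D)$ for every character $\chi$, so Fourier inversion on $G$ forces $D^{(t)} = D$; if $\QRSym{t} = -1$, then $\chi(D^{(t)}) = \chi(D^{(n_0)})$ for every $\chi$, so $D^{(t)} = D^{(n_0)}$, which differs from $D$ by the skew condition. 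This identifies the numerical multiplier group of $D$ with the quadratic residues modulo $p^s$.

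The main obstacle lies in Part 1: one must invoke the classical fact that $\Q\bigl(\sqrt{\QRSym{-1}p}\bigr)$ is the unique quadratic subfield of $\Q(\eta_{p^s})$ in order to tie the parity of $p$ modulo $4$ to the dichotomy $k_0 \in \{0, k\}$ coming from Proposition~\ref{prop2_2}. Once this step is in place, the remainder reduces to Fourier inversion on $G$ and a direct use of the skew condition.
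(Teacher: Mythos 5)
Your proof is correct, and it reaches Part 1 by a genuinely different route than the paper. The paper settles the dichotomy arithmetically: by Proposition \ref{prop2_2}, $k_0=0$ makes $D$ a Paley-type partial difference set with $\lambda=\frac{v-1}{4}$, forcing $v\equiv 1 \bmod 4$, while $k_0=k$ makes $D$ a SHDS with $\lambda=\frac{v-3}{4}$, forcing $v\equiv 3 \bmod 4$; since $v=p^{2\alpha+1}\equiv p \bmod 4$, this ties $k_0$ to $\QRSym{-1}$ with no field theory at all, and Corollary \ref{cor_1} then gives the unified formula. You instead note that Corollary \ref{cor_1} places $p^{\alpha}\sqrt{\pm p}$ inside $\Q(\eta_{p^s})$ and invoke the fact that $\Q\bigl(\sqrt{\QRSym{-1}p}\bigr)$ is the unique quadratic subfield of $\Q(\eta_{p^s})$; this is valid (note $\sqrt{\pm v}$ is never rational since $v$ is an odd power of $p$), and it front-loads exactly the cyclotomic input you need again in Part 2, at the cost of being less elementary than the paper's integrality argument. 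For Part 2 the two arguments are essentially the same: the paper identifies the fixed field of the quadratic residues $G_2\subset(\Z/p^s\Z)^*$ as $\Q(\omega)$ with $\omega=\frac{-1+\sqrt{\QRSym{-1}p}}{2}$ and concludes by Fourier inversion, while you compute the Galois action $\mu_t$ directly on the Gauss sum; these are the same mechanism in different clothing. A small point in your favor: by showing $\chi(D^{(t)})=\chi(D^{(n_0)})$ when $\QRSym{t}=-1$ and excluding $D=D^{(n_0)}$ via the skew condition, you prove that non-residues are \emph{not} multipliers, so the multiplier group equals rather than merely contains the residues; the paper leaves that direction implicit. One hygiene remark: your symbol $\QRSym{t}$ is taken mod $p$ while multipliers live in $(\Z/p^s\Z)^*$; this is harmless because for odd $p$ an element of $(\Z/p^s\Z)^*$ is a square exactly when its reduction mod $p$ is a quadratic residue, but it deserves a sentence.
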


\begin{proof}
Part 1 will follow by showing that $k_0 = 0$ if and only if $\QRSym{-1} = 1$ and $k_0 = k$ if and only if $\QRSym{-1} = -1$. The result will follow from Corollary (\ref{cor_1}). Clearly, $k_0 = 0$ if and only if $v = p^{2\alpha + 1} = 1$ mod $4$. Since $v$ is an odd prime power, it must follow that $p = 1 $ mod $4$. But, $p=1$ mod $4$ if and only if $\QRSym{-1} = 1$. Thus, $k_0 = 0$ if and only if $\QRSym{-1} = 1$. A similar argument shows that $k_0 = k$ if and only if $\QRSym{-1} = -1$.

Now we show part 2. Clearly,

\begin{eqnarray}
 \nonumber
 \chi(D) &=& \frac{-1 + p^{\alpha} \epsilon_{\chi}}{2} + p^{\alpha} \epsilon_{\chi} \frac{ - 1 + \sqrt{\QRSym{-1} p}}{2} \\
 \nonumber
 &=& a_{\chi}  + (2a_{\chi} + 1) \omega
\end{eqnarray}
Where $a_{\chi} = \frac{-1 + p^{\alpha} \epsilon_{\chi}}{2}$ and $\omega = \frac{ -1 + \sqrt{\QRSym{-1}p}}{2}$. Clearly, $\chi(D) \in \Q(\omega)$. Also,
note that $\Q(\omega) \subset \Q(\eta_{p^s})$. By Galois theory, there exist $G_2 \subset Gal(\Q(\eta_{p^s}) \mid \Q) = (\Z/ p^s\Z)^*$ such that $\Q(\omega)$ is the fixed field of $G_2$. A direct calculation yields that $G_2$ is the quadratic residues mod $p^s$. Thus, $\chi(D)^{(n)} = \chi(D)$ for $n$ a quadratic residue mod $p^s$. Therefore,
\begin{eqnarray}
 \nonumber
 \chi(D^{(n)}) &=& \chi(D)
\end{eqnarray}
where $n$ a quadratic residue and $\chi$ a non-principal character. By Fourier inversion, we must have $D^{(n)} = D$ for $n$ a quadratic residue.
\end{proof}

\begin{lem}
Let $G$ admit a GSHDS $D$. Then, a nonzero (nonidentity) element $g$ can be the difference of two elements of $D$ in at most 
$\frac{v-1}{4}$ ways.
\end{lem}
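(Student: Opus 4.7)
The plan is to split into the two cases afforded by Proposition \ref{prop2_2}, namely $k_0 = k$ (SHDS case) and $k_0 = 0$ (Paley PDS case), and in each case read off the coefficient of $x^g$ in $D(x)D(x^{-1})$, since that coefficient is exactly the number of ordered pairs $(d_1, d_2) \in D \times D$ with $d_1 - d_2 = g$.

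In the SHDS case, the proof of Proposition \ref{prop2_2} gives $D(x^{-1}) = D(x^{n_0})$, so using Equation (\ref{gshds_eq1}) and the skew condition I obtain
\begin{eqnarray}
\nonumber
D(x)D(x^{-1}) &=& k[1] + \lambda(G(x) - [1]),
\end{eqnarray}
with $\lambda = \frac{v-3}{4}$. Thus every nonidentity $g$ is a difference in exactly $\frac{v-3}{4} < \frac{v-1}{4}$ ways.

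In the Paley case, the same proof gives $D(x^{-1}) = D(x)$, so $D(x)D(x^{-1}) = D(x)^2$. Substituting $k_0 = 0$, $k = \frac{v-1}{2}$, $\lambda = \frac{v-1}{4}$ into Equation (\ref{gshds_eq2}) yields
\begin{eqnarray}
\nonumber
D(x)^2 &=& k[1] + \tfrac{v-5}{4}\, D(x) + \tfrac{v-1}{4}\, D(x^{n_0}).
\end{eqnarray}
By the skew condition, $D$ and $D^{(n_0)}$ partition $G \setminus \{0\}$, so the coefficient of $x^g$ for nonzero $g$ is $\frac{v-5}{4}$ if $g \in D$ and $\frac{v-1}{4}$ if $g \in D^{(n_0)}$. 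The maximum over nonzero $g$ is therefore $\frac{v-1}{4}$, achieved exactly on $D^{(n_0)}$.

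Combining the two cases, any nonidentity element is a difference in at most $\frac{v-1}{4}$ ways. There is no real obstacle here; the argument is a direct bookkeeping from Proposition \ref{prop2_2}. The only point requiring slight care is recalling that $D(x^{-1})$ equals $D(x^{n_0})$ in the SHDS case but equals $D(x)$ in the Paley case, so that the same product $D(x)D(x^{-1})$ is expanded from two different identities.
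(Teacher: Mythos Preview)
Your proof is correct and follows essentially the same approach as the paper, which simply invokes parts 3 and 4 of Proposition \ref{prop2_2} without further comment. You have merely spelled out the case split and the coefficient reading in detail, arriving at the same bounds $\frac{v-3}{4}$ and $\frac{v-1}{4}$ in the SHDS and Paley cases respectively.
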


\begin{proof}
The lemma follows immediately from parts 3 and 4 of Proposition (\ref{prop2_2}).



\end{proof}

The following result, shown for skew Hadamard difference sets in \cite{PCHBM}, is due to Camion and Mann. The same proof as found in \cite{PCHBM} suffices to deduce the result for GSHDS. We include this proof for completeness. 

\begin{prop}
\label{Camion2}
Let $G$ admit a GSHDS $D$ with $exp(G) = p^s$. Then, $G = \Z/ p^s \Z \times  \Z/ p^s \Z \times \Z/ p^{a_3} \Z 
\times \cdots \times \Z/ p^{a_l} \Z$ where $s \geq a_3 \geq \cdots \geq a_l$.
\end{prop}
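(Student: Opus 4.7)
The plan is to carry over the Camion--Mann argument for SHDSs (as cited in \cite{PCHBM}) to the GSHDS setting. The adaptation is essentially cosmetic, because Proposition \ref{PropCharValue} yields character values of the same shape $(-1 + \epsilon_{\chi}\sqrt{\QRSym{-1}v})/2$ in both the SHDS and GSHDS cases, differing only by the sign $\QRSym{-1}$ inside the radical, and the Camion--Mann argument is insensitive to this sign.

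Suppose, for contradiction, that $G = C \times H$ with $C = \Z/p^s\Z$ and $exp(H) \leq p^{s-1}$, so that $G$ has only one invariant factor of maximal order. Project $D$ along the quotient $\pi : G \twoheadrightarrow C$ with kernel $H$, setting $\bar{D} = \pi_*(D) \in \Z[C]$; the coefficients of $\bar{D}$ are the intersection sizes $|D \cap (g + H)|$, hence non-negative integers bounded by $|H|$. Every character $\chi$ of $C$ pulls back through $\pi$ to a character of $G$ of the same order, so Proposition \ref{PropCharValue} applied to the lift gives
\begin{eqnarray*}
\chi(\bar{D}) &=& \frac{-1 + \epsilon_{\chi}\sqrt{\QRSym{-1}v}}{2}
\end{eqnarray*}
for every non-principal $\chi$ of $C$. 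Evaluating at a primitive $p^s$-th root of unity $\xi = \chi(1)$, this places $\bar{D}(\xi)$ in the quadratic subfield $\Q(\sqrt{\QRSym{-1}p}) \subset \Q(\eta_p)$, even though \emph{a priori} $\bar{D}(\xi)$ only lies in $\Z[\eta_{p^s}]$; this is a strong rigidity on the coefficients of $\bar{D}$.

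The main obstacle is converting this algebraic rigidity into an arithmetic contradiction with the bound $c_i \leq |H|$. In \cite{PCHBM} one expands $\chi(\bar{D})$ via the quadratic Gauss sum $\sum_{n=1}^{p-1}\QRSym{n}\eta_p^n = \sqrt{\QRSym{-1}p}$, translates the resulting identity into a polynomial congruence modulo $\Phi_{p^s}(X)$, and a coefficient comparison produces a coefficient of $\bar{D}$ that is forced to exceed $|H|$ unless $H$ contains a cyclic summand of order $p^s$. The only place the SHDS structure enters that argument is through the skew relation $\chi(D) + \chi(D^{(-1)}) = -1$; replacing $-1$ by the quadratic non-residue $n_0$ does not disturb the bookkeeping, because the multiplier group of a GSHDS is still the full group of quadratic residues modulo $p^s$ (part 2 of Proposition \ref{PropCharValue}), which is precisely the Galois stabilizer of $\Q(\sqrt{\QRSym{-1}p})$ inside $\Q(\eta_{p^s})$. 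Hence the SHDS proof carries over verbatim: $H$ must contain a $\Z/p^s\Z$ summand, and then the invariant-factor decomposition of $G$ takes the claimed form $\Z/p^s\Z \times \Z/p^s\Z \times \Z/p^{a_3}\Z \times \cdots \times \Z/p^{a_l}\Z$ with $s \geq a_3 \geq \cdots \geq a_l$.
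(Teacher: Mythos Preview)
Your sketch is not the Camion--Mann argument as the paper presents it, and the crucial step (``a coefficient comparison produces a coefficient of $\bar D$ that is forced to exceed $|H|$'') is asserted rather than carried out. As written this is not a proof but a pointer to a proof you believe exists in \cite{PCHBM}; and in fact the argument recorded there, and reproduced in the paper, is quite different from what you outline.

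The paper's proof is purely combinatorial and avoids projection, Gauss sums, and cyclotomic coefficient-chasing entirely. Assuming $s > a_2$, one lets $S$ be the set of elements of $G$ whose first coordinate is a unit in $\Z/p^s\Z$, so $|S| = v - v/p$. For each $g \in S$ there is a quadratic residue $1 + bp^{s-1}$ with $g - (1+bp^{s-1})g = (p^{s-1},0,\dots,0)$; since $D$ is invariant under quadratic residues (Proposition \ref{PropCharValue}) and each $G_1$-orbit in $S$ splits into two equal $G_2$-orbits, one gets $|S\cap D| = |S|/2$. Thus the single element $(p^{s-1},0,\dots,0)$ arises as a difference within $D$ at least $(v - v/p)/2$ times, but the preceding lemma caps this count at $(v-1)/4$. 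The inequality $(v-v/p)/2 \le (v-1)/4$ forces $p<2$, a contradiction.

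The paper's route is shorter and more elementary: it uses only the multiplier property and the bound $\lambda \le (v-1)/4$ from Proposition \ref{prop2_2}, with no character-theoretic machinery beyond what was needed to establish those facts. Your projection-to-$\Z/p^s\Z$ idea is closer in spirit to the arguments the paper later uses for the exponent bounds (Propositions after Lemma \ref{lem_exp1}), and could in principle be made to work here, but you would need to actually execute the coefficient bound rather than defer to a citation whose content you may be misremembering.
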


\begin{proof}
Assume that $G = \Z/ p^s \Z \times  \Z/ p^{a_2} \Z \times \Z/ p^{a_3} \Z  \times \cdots \times 
\Z/ p^{a_l} \Z$, where $s > a_2 \geq \cdots \geq a_l$.
Let $S = \{ (c_1,c_2,\ldots, c_l) \in G \mid c_1 \text{ is a unit in } \Z/ p^s \Z \}$.
Clearly, $|S| = v - \frac{v}{p}$. Also, by choosing $b = c_1^{-1} $ mod $p$,
$1 + bp^{s-1}$ is a quadratic residue mod $p^s$. Note that, if $g \in S$ then
$g - (1 + b p^{s-1})g = (p^{s-1}, 0,\ldots, 0)$. 

Consider $S\cap D$, clearly this set is closed under the action of the quadratic residues mod
$p^s$. Hence, the element $(p^{s-1}, 0 , \ldots, 0)$ can be written as the difference of two
elements $S\cap D$ at least $|S\cap D|$ times.

We proceed to calculate $|S \cap D|$ by observing that $S$ is closed under the action of $G_1 = (\Z/p^s \Z)^*$.
Let $G_2 = (\Z/p^s \Z)^{*2}$, then every $G_1$ 
orbit of $S$ splits into two equally sized orbits of $G_2$. Since $D$ is the
sum of $G_2$ orbits where each $G_2$ orbit is picked from exactly one $G_1$ orbit;\footnote{
This deduction follows from the fact that $D$ is invariant under the action of $G_2$,
the quadratic residues, and the fact that $D(x) + D(x^{n_o}) = G(x) - [1]$.
}
we
must have $|S \cap D| = \frac{|S|}{2}=
\frac{v - \frac{v}{p}}{2}$. 

Thus, the element $(p^{s-1}, 0, \ldots,0)$ can be written as the difference of two elements of
$S\cap D \subset D$ at least $\frac{ v - \frac{v}{p}}{2}$ times. By the previous lemma, this
number cannot exceed $\frac{v-1}{4}$. Therefore, we must have:
\begin{eqnarray}
\nonumber
 \frac{v - \frac{v}{p}}{2} \leq \frac{v-1}{4}.
\end{eqnarray} 
Hence, we deduce $v (1 - \frac{2}{p}) \leq -1$. Thus, $ 1- \frac{2}{p} < 0$,
implying $p < 2$. Clearly a contradiction.
\end{proof}







\section{Methodology For Showing The Exponent Bounds}


In our study of GSHDSs, we introduce the concept of Quadratic Residue Slices (QRS) as they were introduced by Chen-Sehgal-Xiang in \cite{Xia1} for SHDSs.

\begin{df}
Let $G$ be an abelian $p$-group of order $v$, and assume $exp(G) = p^s$. A Quadratic Residue Slice (QRS) is a set $D \subset G$ that is invariant under the action
of the quadratic residues mod $p^s$ and has size $\frac{v-1}{2}$. That is, a set $D$ such that,
\begin{eqnarray}
 \nonumber
 D(x) + D(x^{n_0}) &=& G(x) - [1]\\
\nonumber
 D(x^{n}) &=& D(x) \text{ for all } n \in (\Z/ p^s\Z)^{*2}
\end{eqnarray}
Where $n_0$ is a fixed non-quadratic residue mod $p^s$.
\end{df}

Using a similar calculation as Chen-Sehgal-Xiang in \cite{Xia1}, we calculate the character value of a general QRS.

\begin{prop}
Let $D$ be a QRS in an Abelian $p$-group $G$, and $\chi$ a nonprincipal character. Then,
\begin{eqnarray}
 \nonumber
  \chi(D) &=& \frac{-1 + (2a_{\chi} + 1) \sqrt{\QRSym{-1}p}}{2}
\end{eqnarray}
Where, $a_{\chi} \in \Z$. We will call the number $2a_{\chi} + 1$ the Difference Coefficient of $D$ at $\chi$, and we will denote this number as $d_G(D,\chi)$.
\end{prop}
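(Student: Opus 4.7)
The plan is to identify $\chi(D)$ as an algebraic integer in the unique quadratic subfield of $\Q(\eta_{p^s})$, and then pin down its form using the ``skew'' condition $D(x) + D(x^{n_0}) = G(x) - [1]$.

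First I would note that for any $n \in (\Z / p^s \Z)^*$, $\chi(D^{(n)}) = \sum_{g \in D} \chi(ng) = \sigma_n(\chi(D))$, where $\sigma_n$ is the Galois automorphism of $\Q(\eta_{p^s})/\Q$ corresponding to $n$ under the standard identification $\text{Gal}(\Q(\eta_{p^s})/\Q) \cong (\Z/p^s\Z)^*$. The QRS invariance $D^{(n)} = D$ for every quadratic residue $n$ mod $p^s$ therefore translates into the statement that $\chi(D)$ is fixed by $\sigma_n$ for every such $n$. By Galois theory, $\chi(D)$ lies in the fixed field of the index-two subgroup of quadratic residues; this fixed field is the unique quadratic subfield of $\Q(\eta_{p^s})$, namely $\Q(\sqrt{p^*})$ where $p^* = \QRSym{-1} p$.

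Next I would apply $\chi$ to the identity $D(x) + D(x^{n_0}) = G(x) - [1]$. Since $\chi$ is nonprincipal, $\chi(G) = 0$, so
\begin{eqnarray}
\nonumber
\chi(D) + \sigma_{n_0}(\chi(D)) &=& -1.
\end{eqnarray}
This is precisely the trace from $\Q(\sqrt{p^*})$ to $\Q$ of $\chi(D)$ (since $n_0$ is a non-residue, $\sigma_{n_0}$ restricts to the nontrivial Galois automorphism of $\Q(\sqrt{p^*})/\Q$). Writing $\chi(D) = \alpha + \beta \sqrt{p^*}$ with $\alpha, \beta \in \Q$, we conclude $2\alpha = -1$.

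The key arithmetic observation is that $p^* \equiv 1 \pmod 4$ in both cases: if $p \equiv 1 \pmod 4$ then $p^* = p \equiv 1$, and if $p \equiv 3 \pmod 4$ then $p^* = -p \equiv 1$. Hence the ring of integers of $\Q(\sqrt{p^*})$ is $\Z\bigl[\tfrac{1 + \sqrt{p^*}}{2}\bigr]$, so every algebraic integer has the form $\tfrac{m + n\sqrt{p^*}}{2}$ with $m \equiv n \pmod 2$. Since $\chi(D)$ is an algebraic integer (a sum of roots of unity), we obtain $m = -1$, which forces $n$ to be odd; setting $n = 2a_\chi + 1$ yields the claimed formula.

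The argument is essentially routine once the field-theoretic framework is set up; the main thing to be careful about is the parity/congruence bookkeeping that ensures $\Q(\sqrt{p^*})$ really has $p^* \equiv 1 \pmod 4$ in both residue classes for $p$, so that the ``half-integer'' form of algebraic integers actually applies and forces the numerator coefficient of $\sqrt{p^*}$ to be odd.
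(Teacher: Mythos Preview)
Your proof is correct and follows essentially the same route as the paper's: identify $\chi(D)$ as an algebraic integer in the quadratic subfield $\Q(\sqrt{p^*})\subset\Q(\eta_{p^s})$ via the $G_2$-invariance, then use the skew condition to fix the rational part. The only cosmetic difference is that the paper works in the integral basis $\{1,\omega\}$ with $\omega=\tfrac{-1+\sqrt{p^*}}{2}$ and writes $\chi(D)=a_\chi+b_\chi\omega$, whereas you parametrize algebraic integers as $\tfrac{m+n\sqrt{p^*}}{2}$ with $m\equiv n\pmod 2$; these are equivalent presentations of the same ring of integers and lead to the same conclusion.
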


The calculation of the character values of a QRS motivates the definition of the $A_{G,G_1}$ incidence structure.

\begin{df}
\label{AGG1Def}
 Let $G$ be an abelian 
$p$-group with exponent $p^{s}$.
 Let $\theta : G\rightarrow \overline{G}$ be an noncanonical isomorphism such that $\theta(g)(g') =
\theta(g')(g)$. Let $X = \{ \Omega_{g_1}, \cdots , \Omega_{g_r} \} $ be the orbits
of $G_1 = (\Z/ p^s \Z)^*$ on $G\backslash \{ 0\}$ and $Y = \{ \Omega_{\chi_1}, \cdots, \Omega_{\chi_r} \}$ be the 
orbits of $G_1 = (\Z/ exp(G) \Z)^*$ on $\overline{G} \backslash \{ \chi_0 \}$, where $\chi_i = \theta(g_i)$. 
Then, $\rm A_{G,G_1}$ is defined
on the set $Y \times X$ using $\theta$ by:
\begin{eqnarray}
\nonumber
A_{G,G_1} (\Omega_{\theta(g')},\Omega_{g}) &=& \left\{ 
	\begin{array}{lcr}
		(\frac{n}{p})o(p\cdot g) &  \text{  if } \theta(g')(g) = \eta_p^n,\\
		0 &  \text{  else},
	\end{array}
	\right.
\end{eqnarray}
where $(\frac{n}{p})$ is the quadratic residue symbol, $o(g)$ is the order
of $g$ in the group $G$, $\chi_0$ is the principal character, $\eta_p$ is 
a fixed primitive $p$th root of unity, and $p \, \cdot \, g  = p \cdot g = g\, +\, g\, +\, \cdots\, +\, g$ is $g$ added to itself
$p$ times.
\end{df}

With every QRS $D$, we can associate a vector $d$ of $\pm 1$s in $\Z^{|X|}$. The significance of $A_{G,G_1}$ is given by the fact that $A_{G,G_1} d$ yields all the Difference Coefficients of $D$ in $G$.
We will show the following important properties of the incidence structure $A_{G,G_1}$.
\begin{prop}
\label{prop3}
Let $G$ be an abelian $p$-group of $exp(G) = p^s$ and order $p^{\beta}$. Let $D$ be a QRS and $d$ its corresponding vector of $\pm 1$s. Then,
\begin{enumerate}
 \item The matrix $A_{G,G_1}$ satisfies $A_{G,G_1}^2 = \frac{|G|}{p} I$.
 \item If $p^k$ divides all the entries of $A_{G,G_1} d$ then $p^{2k + 1}$ divides the $|G|$.
 \item Let $\beta$ be odd. Then, $D$ is a GSHDS in $G$ if and only if $A_{G,G_1} d = \sqrt{\frac{|G|}{p}} \overline{d}$, where $\overline{d}$ is the $\pm 1$ representation of the dual GSHDS of $D$.
\end{enumerate}
\end{prop}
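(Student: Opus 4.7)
The plan is to handle the three parts in order, with Part 1 supplying the spectral identity that drives the others.

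\textbf{Part 1.} I would compute $(A_{G,G_1}^2)_{(\Omega_{\theta(g'')}, \Omega_g)}$ directly from Definition \ref{AGG1Def}. The only middle orbits $\Omega_{g'}$ that contribute are those for which both $\theta(g')(g)$ and $\theta(g'')(g')$ are $p$-th roots of unity; converting the orbit sums into ordinary sums over $G \setminus \{0\}$, with the orbit-size factors cancelling against the factors $o(p\cdot g)$ in the matrix entries, the result factors as the quadratic Gauss-type sum $\sum_{n \in (\Z/p\Z)^{*}} \QRSym{n}^{2} = p-1$ multiplied by the character-orthogonality identity $\sum_{g' \in G} \theta(g')(g - g'') = |G|\,\delta_{g = g''}$. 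The combination collapses to $(|G|/p)\,I$, giving Part 1.

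\textbf{Part 2.} Write $m_\chi = d_G(D,\chi) = 2 a_\chi + 1$ for the Difference Coefficient of $D$ at $\chi$, as supplied by the QRS character-value proposition immediately above. The entries of $A_{G,G_1}\,d$ are exactly the numbers $m_\chi$ up to signs on the $G_1$-orbit representatives, so the hypothesis $p^k \mid A_{G,G_1}\, d$ forces $p^k \mid m_\chi$ for every non-principal $\chi$; because each $m_\chi$ is odd, it is non-zero, hence $m_\chi^{2} \geq p^{2k}$. Parseval $\sum_{\chi \neq \chi_0}|\chi(D)|^{2} = |D|(|G|-|D|) = (v^{2}-1)/4$, combined with the explicit QRS values and a separation of rational from irrational parts (using $\sum_{\chi \neq \chi_0} m_\chi = 0$, which drops out of $\sum_{\chi \neq \chi_0}\chi(D) = -|D|$), yields the identity $\sum_{\chi \neq \chi_0} m_\chi^{2} = \frac{|G|}{p}(|G|-1)$. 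Comparing with the lower bound $\sum_{\chi \neq \chi_0} m_\chi^{2} \geq (|G|-1)\,p^{2k}$ gives $|G|/p \geq p^{2k}$, hence $p^{2k+1} \mid |G|$.

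\textbf{Part 3.} For the forward direction, Proposition \ref{PropCharValue} gives $\chi(D) = \frac{-1 + \epsilon_\chi \sqrt{\QRSym{-1}\,v}}{2}$ for every non-principal $\chi$, so $d_G(D,\chi) = \epsilon_\chi\, p^{(\beta-1)/2} = \epsilon_\chi\sqrt{|G|/p}$. Reading these off as the entries of $A_{G,G_1}\,d$, and recalling from Corollary \ref{corDual} that the sign $\epsilon_\chi$ is exactly the $\pm 1$ indicator of whether $\chi$ lies in $D_1$ or $D_2$, that is, the $\pm 1$ vector $\overline{d}$ associated with the dual GSHDS $\overline{D}$, one obtains $A_{G,G_1}\,d = \sqrt{|G|/p}\,\overline{d}$. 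Conversely, if $A_{G,G_1}\,d = \sqrt{|G|/p}\,\overline{d}$ with $\overline{d} \in \{\pm 1\}^{|X|}$, then the difference coefficients of $D$ take only the two values $\pm\sqrt{|G|/p}$, forcing $\chi(D) \in \{\frac{-1 \pm \sqrt{\QRSym{-1}\,v}}{2}\}$ for every non-principal $\chi$; together with the skew condition already ensured by the QRS hypothesis, Fourier inversion on $\Z[G]$ then recovers the GSHDS identities (\ref{GSHDSeqn_2_1}) and (\ref{GSHDSeqn_2_2}).

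The main obstacle is the Part 1 calculation: keeping track of orbit sizes, the $p$-torsion support of $A_{G,G_1}$, and the Legendre-symbol Gauss sums simultaneously is bookkeeping-intensive, but once Part 1 is in place Parts 2 and 3 are short character-theoretic consequences.
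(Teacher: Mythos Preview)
Your Parts 2 and 3 are essentially the paper's arguments. The paper packages Part 2 as the coefficient of $[1]$ in $(D(x)-D(x^{n_0}))^2$ together with the row of $C_{G,G_1}$ coming from orbit sizes (Proposition \ref{prop6}), but this is the Fourier dual of your Parseval computation and yields the same identity $\sum_{\chi\neq\chi_0} d_G(D,\chi)^2=\tfrac{|G|}{p}(|G|-1)$. Your Part 3 likewise matches Proposition \ref{prop6}, part 2, and Corollary \ref{cor_qrs_1}.

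Part 1 is where you diverge from the paper, and your sketch has a genuine gap. The paper does \emph{not} compute $A_{G,G_1}^2$ entrywise. Instead it proves (Lemma \ref{lem3}) that
\[
\chi_j\bigl(O_{g_i}(x)-O_{g_i}(x^{n_0})\bigr)=A_{G,G_1}(\Omega_{\chi_j},\Omega_{g_i})\sqrt{\QRSym{-1}p},
\]
so that $A_0-A_0^{(n_0)}=\sqrt{\QRSym{-1}p}\,A_{G,G_1}$, where $A_0,A_0^{(n_0)}$ are the off-diagonal blocks of the character table $C_{G,G_2}$ of the $S$-ring $H(G,G_2)$ (Lemma \ref{lem1}). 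The orthogonality $\overline{C_{G,G_2}}\,C_{G,G_2}=|G|I$ then gives $(\overline{A_0}-\overline{A_0^{(n_0)}})(A_0-A_0^{(n_0)})=|G|I$, hence $p\,A_{G,G_1}^2=|G|I$ in one line (Corollary \ref{cor1}). The Gauss-sum identity $\omega-\omega^{(n_0)}=\sqrt{\QRSym{-1}p}$ is precisely what converts Legendre symbols into honest character values so that Schur orthogonality applies.

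Your proposed factorization does not do this conversion. After unwinding orbits you are summing $\QRSym{a(g')\,b(g')}$ over those $g'$ for which both $\theta(g')(g)=\eta_p^{b(g')}$ and $\theta(g')(g'')=\eta_p^{a(g')}$ are \emph{primitive} $p$-th roots; the summand is a Legendre symbol in $a,b$, not the character value $\theta(g')(g-g'')$, and the sum is over a restricted support rather than all of $G$. So the orthogonality relation $\sum_{g'\in G}\theta(g')(g-g'')=|G|\,\delta_{g=g''}$ is not what is being computed, and the off-diagonal vanishing does not follow from it. (On the diagonal one has $a=b$ and the sum reduces to a support count, which does give $|G|/p$ after the $o(p\cdot g)$ weights; the off-diagonal case is the real content and needs a separate argument.) The cleanest fix is exactly the paper's: pass through the Gauss sum to turn $\QRSym{\cdot}$ into character values, which amounts to recognizing $A_{G,G_1}$ as a rescaled block of $C_{G,G_2}$.
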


Using the previous proposition, we extend the known exponent bounds for skew Hadamard difference sets.

\section{Background on $S$-rings}

In our study of the $A_{G,G_1}$ incidence structures, we will make use of the 
character tables of the general association schemes given by $\Z[G]^K$. 

\begin{df}
Let $G$ be an abelian group, and $K \subset Aut(G)$ a subgroup of automorphisms of $G$. Then, the set: 
\begin{eqnarray}
\nonumber
 \Z[G]^K &=& \{ a \in \Z[G] \mid \text{ $a = \sum_{g \in G} a_g x^g$ and  $ a_{\sigma(g)} = a_{g}$  for all $ \sigma \in K$ } \}
\end{eqnarray}
is called the $K$-invariant association scheme of $G$ denoted by $H(G,K)$.
\end{df}

The association schemes $H(G,K)$ are examples of $S$-rings introduced by Bannai and Ito in \cite{Bannai1}. Their character tables can be calculated using the standard representation theory of abelian groups.

\begin{prop}
\label{hgkP1}
 Let $H \subset Aut(G)$ and $G$ an abelian group. Let $X = \{ O_{g_0}, \ldots , O_{g_r} \}$ be the set of $H$-orbits on $G$, where $g_0 = 0$ is the identity element of $G$. Let $\chi_i = \theta(g_i)$, and let $Y = \{Q_{\chi_0},\ldots,O_{\chi_r} \}$ be the corresponding set of $H$-orbits on $\overline{G}$. Define
 $e_{O_{\chi_i}} = \sum_{\chi \in O_{\chi_i} } e_{\chi}$, where $e_{\chi} = \sum_{g\in G} \frac{1}{|G|} \overline{\chi(g)}x^g$. Then, 
\begin{enumerate}
 \item The set $X$ is a basis of
  $H(G,K)$.
 \item The set $Z = \{ e_{\chi_0}, \ldots ,e_{\chi_r}\}$ is the basis of 
   primitive idempotents  of $H(G,K)$.
 \item We have $O_{g_i}(x) = \sum_{j=0}^r \chi_j(O_{g_i}) e_{O_{\chi_i}}$, where 
$\chi_j(O_{g_i}) = \sum_{g \in O_{g_i}} \chi_j(g)$.
 \item We have $e_{O_{\chi_i}} = \frac{1}{|G|} 
    \sum_{j=0}^r  \overline{O_{\chi_i}(g_j)} O_{g_j}(x)$, where 
    $O_{\chi_i}(g_j) = \sum_{\chi \in O_{\chi_i}} \chi(g_j)$.
 \item Let $C_{G,K}$ be the $(r+1) \times (r+1)$ matrix given by
  $C_{G,K}(\chi_i, g_j) = \chi_i(O_{g_j})$, and $B_{G,K}$ be the
 $(r+1) \times (r+1)$ matrix given by 
 $B_{G,K}(g_i,\chi_j) = \overline{O_{\chi_j}(g_i)}$. Then,
 \begin{eqnarray}
\nonumber
  B_{G,K} C_{G,K} &=& |G| I_{r+1,r+1}, \\
\nonumber
  C_{G,K} B_{G,K} &=& |G| I_{r+1,r+1}, \\
\nonumber
  B_{G,K} &=& \overline{C_{G,K}},
 \end{eqnarray}
 where $I_{r+1,r+1}$ is the $(r+1) \times (r+1)$ identity matrix.
\end{enumerate}
\end{prop}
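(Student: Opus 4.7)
The plan is to deduce every assertion by restricting the standard Fourier inversion on the group algebra $\C[G]$, namely $x^g = \sum_{\chi \in \overline{G}} \chi(g) e_\chi$ together with $e_\chi = \frac{1}{|G|}\sum_{g \in G} \overline{\chi(g)}\, x^g$, to the $K$-invariant subring $H(G,K)$. The central observation is that $K$ acts both on $G$ and (contragrediently) on $\overline{G}$, and that the $K$-invariance of an orbit sum $O_{g_i}(x)$ forces the character value $\chi \mapsto \chi(O_{g_i})$ to be constant on $K$-orbits of characters; dually, $g \mapsto O_{\chi_i}(g)$ is constant on $K$-orbits of group elements. Once these two symmetries are put to work, Parts 3 and 4 become bookkeeping and Parts 1, 2 and 5 follow as corollaries.

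First I would dispose of Part 1: an element $\sum_g a_g x^g$ is $K$-fixed iff the coefficient function is constant on $K$-orbits, so $\{O_{g_0},\ldots,O_{g_r}\}$ is a $\Z$-basis of $H(G,K)$ and, after tensoring, a $\C$-basis of $H(G,K)\otimes \C$. For Part 3, sum $x^g = \sum_\chi \chi(g) e_\chi$ over $g \in O_{g_i}$ and collect $\chi(O_{g_i}) = \sum_{g \in O_{g_i}} \chi(g)$; because $O_{g_i}$ is $K$-stable this coefficient is constant on each $K$-orbit $O_{\chi_j}$, so the sum reduces to $O_{g_i}(x) = \sum_{j=0}^r \chi_j(O_{g_i})\, e_{O_{\chi_j}}$. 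Part 4 is the mirror image: substitute $e_\chi = |G|^{-1}\sum_g \overline{\chi(g)}\, x^g$ into $e_{O_{\chi_i}} = \sum_{\chi \in O_{\chi_i}} e_\chi$, interchange summations, and use that $\overline{O_{\chi_i}(g)}$ depends only on the $K$-orbit of $g$ to recover the stated formula.

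For Part 2 I would use Part 4 to see that each $e_{O_{\chi_i}}$ actually lies in $H(G,K)\otimes \C$. Each such element is a sum of pairwise orthogonal primitive idempotents $e_\chi$ of $\C[G]$, hence is itself an idempotent; distinct $e_{O_{\chi_i}}$ are orthogonal because their defining $K$-orbits on $\overline{G}$ are disjoint, and the totality sums to $1$. Having exhibited $r+1$ pairwise orthogonal nonzero idempotents with unit sum in a commutative $\C$-algebra of dimension $r+1$, any finer decomposition would violate the dimension count, so the $e_{O_{\chi_i}}$ are primitive. Finally, Part 5 is a repackaging of Parts 3 and 4: those two formulas are precisely the mutually inverse change-of-basis expressions between $\{O_{g_i}(x)\}$ and $\{e_{O_{\chi_i}}\}$, which forces $B_{G,K}C_{G,K} = C_{G,K}B_{G,K} = |G|\,I$. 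The equality $B_{G,K} = \overline{C_{G,K}}$ then reduces to the identity $O_{\chi_j}(g_i) = \chi_i(O_{g_j})$, which follows from the symmetry hypothesis $\theta(g)(g') = \theta(g')(g)$ by reparametrising the sum over $\chi \in O_{\chi_j}$ as a sum over $h \in O_{g_j}$ via $\chi = \theta(h)$. I expect this last identification, together with the dimension argument establishing primitivity, to be the only places where real care is needed; everything else is routine Fourier analysis on $G$ organised by $K$-orbits.
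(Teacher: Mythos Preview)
Your proposal is correct and aligns with the paper's approach. The paper's own proof is in fact much terser: it declares Parts 1--4 and the first two equations of Part 5 to be ``standard representation theory of abelian groups'' and only writes out the verification of $B_{G,K} = \overline{C_{G,K}}$, which it does via exactly the symmetry $\theta(g)(g') = \theta(g')(g)$ you invoke at the end; your proposal simply supplies the details the paper omits.
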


\begin{proof}
It suffices to prove $B_{G,K} = \overline{C_{G,K}}$. The rest of the claims follow from the standard representation theory of abelian groups.
Note that,
\begin{eqnarray}
 \nonumber
 C_{G,K}(\chi_i, g_j) &=& \chi_i(O_{g_j}) \\
 \nonumber
   &=& \theta(g_i, O_{g_j}) \\
 \nonumber
   &=& \theta(O_{g_j}, g_i) \\
 \nonumber
   &=& O_{\chi_j}(g_i)\\
 \nonumber
   &=& \overline{B_{G,K}(g_i,\chi_j)}
\end{eqnarray}
\end{proof}

\section{Results from the $H(G,G_1)$ and $H(G,G_2)$ Association Schemes}

For the rest of the paper, we will assume a noncanonical isomorphism
$\theta : G \rightarrow \overline{G}$ such that
$\theta(g)(g') = \theta(g')(g)$. We will also assume that $n_0$ is a fixed non-quadratic residue mod $exp(G)$. We will denote by $G_1 = (\Z/ exp(G)\Z)^*$ and $G_2 = (\Z/ exp(G)\Z)^{*2}$. We will let $g_0, g_1, \ldots, g_r$ be the
$G_1$-orbit representatives of the action of $G_1$ on $G$, where $g_0 = 0$ is the identity element of $G$.
From the representatives $g_0,\ldots, g_r$ and $\theta$, we will let  
$\chi_i =\theta(g_i)$, which will be the corresponding $G_1$-orbit 
representatives
of the action of $G_1$ on $\overline{G}$. Also, we will denote by $\Omega_g$ the $G_1$ orbit of $g$, and by $O_g$ the $G_2$ orbit of $g$. Clearly, $\Omega_g = O_g + O_g^{(n_0)}$.

The following lemma will be used in our study of QRSs.

\begin{lem}
\label{lem1}
Let $G$ be an abelian group. Then,
\begin{enumerate}
 \item We have,
\begin{eqnarray}
\nonumber
 [|\Omega_{g_0}|, \ldots, |\Omega_{g_r}| ] C_{G,G_1} &=& |G| [ 1, 0, \ldots, 0]^t
\end{eqnarray}
  \item Using the basis $\{ O_{g_0}, O_{g_1}, \ldots, O_{g_r}, O_{g_1}^{(n_0)}, \ldots, O_{g_r}^{(n_0)} \}$ of $H(G,G_2)$, the matrix $C_{G,G_2}$ has decomposition,
\begin{eqnarray}
 \nonumber
  C_{G,G_2} &=& \left[ 
		\begin{array}{ccc}
		 1& j_{G_1} & j_{G_1} \\
		 j & A_0 & A_0^{(n_0)} \\
		 j & A_0^{(n_0)} & A_0
		\end{array}
	\right]
\end{eqnarray}
 Where $j$ is the vector of all $1$s of size $r \times 1$, $j_{G_1} = [ |O_{g_1}|, \ldots, |O_{g_r}| ]$ of size $1 \times r$, and
\begin{eqnarray}
 \nonumber
 A_0(O_{\chi_i},O_{g_j}) &=& \chi_i(O_{g_j}) \\
 \nonumber
 A_0^{(n_0)}(O_{\chi_i},O_{g_j}) &=& \chi_i(O_{g_j}^{(n_0)})
\end{eqnarray}
\item We must have,
\begin{eqnarray}
 \nonumber
  (\overline{A_0} - \overline{A_0^{(n_0)}}) (A_0 - A_0^{(n_0)}) &=& |G| I
\end{eqnarray}
\end{enumerate}
\end{lem}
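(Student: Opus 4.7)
The plan is to establish all three parts by unpacking the definitions from Proposition~\ref{hgkP1} and then invoking standard character orthogonality, using throughout the symmetry $\theta(g)(g') = \theta(g')(g)$ of the fixed noncanonical isomorphism.

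For part (1), the $j$-th entry of $[|\Omega_{g_0}|,\dots,|\Omega_{g_r}|]\, C_{G,G_1}$ is $\sum_{i=0}^r |\Omega_{g_i}|\,\chi_i(\Omega_{g_j})$. Using the symmetry of $\theta$, I would rewrite $\chi_i(\Omega_{g_j}) = \Omega_{\chi_j}(g_i)$. Since $\Omega_{\chi_j}$ is a sum over a complete $G_1$-orbit of characters, the function $g \mapsto \Omega_{\chi_j}(g)$ is constant on every $G_1$-orbit of $G$, so $|\Omega_{g_i}|\,\Omega_{\chi_j}(g_i) = \sum_{g \in \Omega_{g_i}} \Omega_{\chi_j}(g)$. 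Summing over $i$ collapses to $\Omega_{\chi_j}(G) = \sum_{\chi \in \Omega_{\chi_j}} \chi(G)$, which by orthogonality equals $|G|$ when $\chi_0 \in \Omega_{\chi_j}$ (that is, when $j=0$) and $0$ otherwise.

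For part (2), I would exploit that $[G_1 : G_2] = 2$ with coset representatives $\{1, n_0\}$. Hence, for $i \geq 1$ the $G_1$-orbit $\Omega_{g_i}$ splits as the disjoint union $O_{g_i} \cup O_{g_i}^{(n_0)}$ of two equal-sized $G_2$-orbits, while $\Omega_{g_0} = \{0\}$ is already a $G_2$-orbit; the same decomposition holds on the dual side. Then I would read off each block of $C_{G,G_2}$ directly: the first row consists of $\chi_0(O_{g_j}) = |O_{g_j}|$ (producing $j_{G_1}$ repeated), and the first column of $\chi_i(0) = 1$ (producing $j$ repeated). For the remaining $2r \times 2r$ lower-right block, the four sub-blocks collapse to $A_0$ or $A_0^{(n_0)}$ via the identities $(n_0\cdot \chi_i)(O_{g_j}) = \chi_i(O_{g_j}^{(n_0)})$ and $(n_0\cdot \chi_i)(O_{g_j}^{(n_0)}) = \chi_i(O_{g_j}^{(n_0^2)}) = \chi_i(O_{g_j})$, the last step because $n_0^2 \in G_2$ fixes every $G_2$-orbit.

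For part (3), I would apply $B_{G,G_2}\,C_{G,G_2} = |G|\, I$ together with $B_{G,G_2} = \overline{C_{G,G_2}}$ from Proposition~\ref{hgkP1}. The block structure of $B_{G,G_2}$ is identical to that of $C_{G,G_2}$ with $A_0, A_0^{(n_0)}$ replaced by $\overline{A_0}, \overline{A_0^{(n_0)}}$, since $j$ and $j_{G_1}$ are real. Computing the $(1,1)$ sub-block of the product gives $\overline{A_0}A_0 + \overline{A_0^{(n_0)}}A_0^{(n_0)} = |G|\,I_r - J$, while the $(1,2)$ sub-block gives $\overline{A_0}A_0^{(n_0)} + \overline{A_0^{(n_0)}}A_0 = -J$, where $J = j\cdot j_{G_1}$ is the same rank-one correction in both. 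Subtracting the second identity from the first eliminates $J$ and yields $(\overline{A_0} - \overline{A_0^{(n_0)}})(A_0 - A_0^{(n_0)}) = |G|\, I$. The principal obstacle is managing the block bookkeeping in part (2), where the principal orbit $\Omega_{g_0}$ does not split and creates an asymmetry between the first row/column and the rest; once this is cleanly set up, parts (1) and (3) follow as essentially formal consequences of character orthogonality and the block multiplication.
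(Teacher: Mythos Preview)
Your proposal is correct and follows essentially the same route as the paper. The only cosmetic difference is in part~(1): the paper observes that $[|\Omega_{g_0}|,\dots,|\Omega_{g_r}|]$ is precisely the first row of $\overline{C_{G,G_1}}$ and then reads off part~(1) directly from $\overline{C_{G,G_1}}\,C_{G,G_1}=|G|I$, whereas you unfold this into an explicit orthogonality computation; for parts~(2) and~(3) your block calculation and subtraction argument match the paper's almost line for line.
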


\begin{proof}
Clearly, by Proposition (\ref{hgkP1}),
\begin{eqnarray}
\label{eq1_p1}
\overline{C_{G,G_1}} C_{G,G_1} &=& |G| I
\end{eqnarray}

Note that the first row of $\overline{C_{G,G_1}}$ is given by
$[|\Omega_{g_0}|, \ldots, |\Omega_{g_r}| ]$. Thus, Part 1 follows Equation (\ref{eq1_p1}).
Part 2 is a clear calculation of $C_{G,G_2}$ using the basis $\{ O_{g_0}, O_{g_1}, \ldots, O_{g_r}, O_{g_1}^{(n_0)}, \ldots, O_{g_r}^{(n_0)} \}$. 

Clearly, by Proposition (\ref{hgkP1}), $|G|I$ equals,
\begin{eqnarray}
 \nonumber
 \left[ 
	\begin{array}{ccc}
	1 & j_{G_1} & j_{G_1} \\
	j & \overline{A_0} & \overline{A_0^{(n_0)}} \\
	j & \overline{A_0^{(n_0)}} & \overline{A_0} 
	\end{array}
 \right]
 \left[ 
	\begin{array}{ccc}
	1 & j_{G_1} & j_{G_1} \\
	j & A_0 & A_0^{(n_0)} \\
	j & A_0^{(n_0)} & A_0 
	\end{array}
 \right]
\end{eqnarray}
Thus, we deduce the following equations,
\begin{eqnarray}
 \nonumber
  J_{G_1} + \overline{A_0} A_0 + \overline{A_0^{(n_0)}} A_0^{(n_0)} &=& |G| I\\
 \nonumber
  J_{G_1} + \overline{A_0^{(n_0)}} A_0 + \overline{A_0} A_0^{(n_0)} &=&  0,
\end{eqnarray}
where $J_{G_1} = J Diag(|O_{g_1}|, \ldots, |O_{g_r}|)$. From both of these equations, we can deduce Part 3.
\end{proof}

We will calculate the matrix $A_0$ whenever $G$ is an abelian $p$-group. However, we need some auxilary lemmas before we do so.

\begin{lem}
\label{lem2}
Define,
\begin{eqnarray}
 \nonumber
 \omega &=& \sum \{ \eta_p^n \mid 1 \leq n \leq p-1, \QRSym{n} = 1 \} \\
 \nonumber
 \omega^{(n_0)} &=& \sum \{ \eta_p^n \mid 1 \leq n \leq p-1, \QRSym{n} =  -1 \} 
\end{eqnarray}
where $\eta_p$ be a primitive $p$-th root of unity. Then, we can choose $\eta_p$ so that,
\begin{eqnarray}
 \nonumber
 \omega &=& \frac{-1 + \sqrt{\QRSym{-1}p}}{2}\\
 \nonumber
 \omega^{(n_0)} &=& \frac{-1 - \sqrt{\QRSym{-1}p}}{2}
\end{eqnarray}
\end{lem}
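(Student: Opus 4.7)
The plan is to recognize this as the classical quadratic Gauss sum identity and execute the standard two-step computation: first control the sum $\omega + \omega^{(n_0)}$, then control the product (equivalently the square of the difference), and finally fix the sign ambiguity by choosing $\eta_p$ appropriately.

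First, I would observe that $\omega + \omega^{(n_0)} = \sum_{n=1}^{p-1} \eta_p^n = -1$ since the full sum of $p$-th roots of unity vanishes. This pins down $\omega$ as soon as we know $\omega - \omega^{(n_0)}$.

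Next, let $g = \omega - \omega^{(n_0)} = \sum_{n=1}^{p-1} \QRSym{n} \eta_p^n$, the quadratic Gauss sum. I would compute $g^2$ via
\begin{eqnarray}
\nonumber
g^2 &=& \sum_{a,b=1}^{p-1} \QRSym{a}\QRSym{b}\eta_p^{a+b} \;=\; \sum_{a,b=1}^{p-1}\QRSym{ab}\eta_p^{a+b}.
\end{eqnarray}
Reindexing via $b = ac$ (legitimate for $a \in (\Z/p\Z)^*$) gives
\begin{eqnarray}
\nonumber
g^2 &=& \sum_{c=1}^{p-1}\QRSym{c}\sum_{a=1}^{p-1}\eta_p^{a(1+c)}.
\end{eqnarray}
The inner sum equals $p-1$ when $c \equiv -1 \pmod p$ and $-1$ otherwise, so
\begin{eqnarray}
\nonumber
g^2 &=& \QRSym{-1}(p-1) \;-\; \sum_{c \neq -1}\QRSym{c} \;=\; \QRSym{-1}(p-1) + \QRSym{-1} \;=\; \QRSym{-1}\,p,
\end{eqnarray}
using $\sum_{c=1}^{p-1}\QRSym{c} = 0$. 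Hence $\omega - \omega^{(n_0)} = \pm\sqrt{\QRSym{-1}p}$, and together with $\omega + \omega^{(n_0)} = -1$ this yields $\{\omega, \omega^{(n_0)}\} = \{(-1\pm\sqrt{\QRSym{-1}p})/2\}$.

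Finally I would address the sign. Replacing the primitive $p$-th root of unity $\eta_p$ by $\eta_p^{n_0}$ permutes the summands of $\omega$ with those of $\omega^{(n_0)}$, because multiplication by the non-residue $n_0$ interchanges the residues and non-residues in $(\Z/p\Z)^*$. Thus the two possible signs in $g = \pm\sqrt{\QRSym{-1}p}$ are realized by the two choices of $\eta_p$ in its orbit under $\langle n_0\rangle$, so we can fix $\eta_p$ to obtain exactly the signs asserted in the statement. The only real content is the Gauss sum computation; the symmetry $\omega + \omega^{(n_0)} = -1$ and the sign convention are immediate.
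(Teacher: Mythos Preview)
Your proof is correct and complete; it is the classical direct evaluation of the quadratic Gauss sum. The paper, however, takes a different route: it observes that the set $D$ of nonzero quadratic residues in $\Z/p\Z$ is itself a GSHDS (the Paley example), and then simply invokes Proposition~\ref{PropCharValue}, which was already established for arbitrary GSHDSs, to conclude that $\chi(D)\in\bigl\{\tfrac{-1\pm\sqrt{\QRSym{-1}p}}{2}\bigr\}$; since $\{\chi(D),\chi(D^{(n_0)})\}=\{\omega,\omega^{(n_0)}\}$ for a suitable $\chi$, the lemma follows after choosing $\eta_p$ to fix the sign. So where you compute $g^2$ by reindexing, the paper instead recycles its structural result that the character values of any GSHDS satisfy $X^2+X+(k_0-\lambda)=0$. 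Your argument is more elementary and entirely self-contained; the paper's argument is shorter within its own framework and keeps the exposition thematically unified, but at the cost of relying on the nontrivial Proposition~\ref{PropCharValue} (and on the unproved-but-standard fact that the quadratic residues in $\F_p$ form a GSHDS).
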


\begin{proof}
 Consider the association scheme $H((\Z/ p\Z), (\Z/ p\Z)^{*2})$. Clearly, this is a $3$-dimensional algebra with basis $\{ O_0, O_1, O_{n_0} \}$. Also,
note that $D = O_1$ is a GSHDS. Thus, by Proposition (\ref{PropCharValue}), we have for an arbitrary non-principal character $\chi$ of $(\Z/ p\Z)$.
\begin{eqnarray}
 \nonumber
 \chi(D) &=& \sum\{ \eta_p^{m n} \mid 1 \leq n \leq p-1, \QRSym{n} = 1 \} \\
  \nonumber
   &=& \frac{ -1 + \epsilon_{\chi} \sqrt{\QRSym{-1} p}}{2}
\end{eqnarray}
where $\chi(1) = \eta_p^m$. Clearly, $\{ \chi(D) , \chi(D^{(n_0)}) \} = \{ \omega, \omega^{(n_0)} \}$. Thus, the result follows.
\end{proof}

From now on, we will assume $\eta_p$ is a primitive $p$-th root of unity so that the conclusion of Lemma (\ref{lem2}) is satisfied.

\begin{lem}
\label{lem3}
Let $O_{g_i}$ be a $G_2$ orbit of $G \backslash \{ 0 \}$, and $\chi_j$ a $G_2$ orbit representative of $\overline{G} \backslash \{ \chi_0 \}$ . Then,
\begin{eqnarray}
 \nonumber
 \chi_j(O_{g_i}(x) - O_{g_i}(x^{n_0})) &=& A_{G,G_1}( \Omega_{\chi_j}, \Omega_{g_i}) \sqrt{\QRSym{-1} p }
\end{eqnarray}
\end{lem}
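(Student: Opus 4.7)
The plan is to reduce the left-hand side to a twisted character sum $T(\zeta) := \sum_{m \in G_1} \QRSym{m}\zeta^m$ where $\zeta := \chi_j(g_i) = \theta(g_j)(g_i)$, and then to evaluate $T(\zeta)$ according to the order of $\zeta$. Write $o(g_i) = p^k$ with $k \geq 1$. The stabilizer of $g_i$ in $G_1$ is $S = 1 + p^k(\Z/p^s\Z)$, of size $p^{s-k}$, and since $p$ is odd Hensel's lemma yields $S \subseteq G_2$. Parametrizing $O_{g_i}$ by cosets of $S$ inside $G_2$ (and $O_{g_i}^{(n_0)}$ by cosets of $S$ inside $n_0 G_2$, with the same fiber size $p^{s-k}$), I obtain
\begin{eqnarray*}
 \chi_j(O_{g_i}) - \chi_j(O_{g_i}^{(n_0)})
 &=& \frac{1}{p^{s-k}}\left(\sum_{m \in G_2}\zeta^m - \sum_{m \in n_0 G_2}\zeta^m\right)
 \;=\; \frac{1}{p^{s-k}}\,T(\zeta),
\end{eqnarray*}
where the last equality uses $\QRSym{m} = +1$ on $G_2$ and $-1$ on $n_0 G_2$.

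Next I evaluate $T(\zeta)$ in three cases. If $\zeta = \eta_p^n$ with $n \not\equiv 0 \bmod p$, both $\zeta^m$ and $\QRSym{m}$ factor through $m \bmod p$; since each class in $(\Z/p\Z)^*$ has $p^{s-1}$ lifts to $G_1$, a substitution $m' = nm$ gives $T(\zeta) = p^{s-1}\QRSym{n}\sum_{m' \in (\Z/p\Z)^*}\QRSym{m'}\eta_p^{m'} = p^{s-1}\QRSym{n}\sqrt{\QRSym{-1}p}$, using the Gauss sum identity $\omega - \omega^{(n_0)} = \sqrt{\QRSym{-1}p}$ from Lemma \ref{lem2}. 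If $\zeta$ has order $p^r$ with $r \geq 2$, I write $m = a + pb$ with $a \in (\Z/p\Z)^*$ and $b \in \Z/p^{r-1}\Z$, use $\QRSym{a+pb} = \QRSym{a}$, and observe that $\sum_b \zeta^{pb} = 0$ because $\zeta^p$ is a nontrivial $p^{r-1}$-th root of unity; lifting back up to $G_1$ preserves the vanishing. Finally, if $\zeta = 1$ then $T(\zeta) = \sum_{m \in G_1}\QRSym{m} = 0$ by equal count of residues and non-residues.

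Combining, in the first case $p^{s-1}/p^{s-k} = p^{k-1} = o(p\cdot g_i)$, so the left-hand side equals $\QRSym{n}\, o(p\cdot g_i)\sqrt{\QRSym{-1}p} = A_{G,G_1}(\Omega_{\chi_j},\Omega_{g_i})\sqrt{\QRSym{-1}p}$ by Definition \ref{AGG1Def}. In the other two cases $\theta(g_j)(g_i)$ is either $1 = \eta_p^0$ (so $\QRSym{0} = 0$) or not a $p$-th root of unity at all (the ``else'' branch of the definition), so both sides vanish. The main technical obstacle is the stabilizer bookkeeping: establishing $S \subseteq G_2$ via Hensel, and verifying that the two orbits receive the same multiplicity when the sums are expanded over $G_2$ and $n_0 G_2$. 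Once that is in place, the rest reduces to Lemma \ref{lem2} together with the standard vanishing of nontrivial root-of-unity sums.
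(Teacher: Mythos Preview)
Your proof is correct and follows essentially the same approach as the paper: both arguments reduce the difference $\chi_j(O_{g_i}) - \chi_j(O_{g_i}^{(n_0)})$ to the quadratic Gauss sum $\omega - \omega^{(n_0)} = \sqrt{\QRSym{-1}p}$ of Lemma~\ref{lem2}, with a case split according to whether $\chi_j(g_i)$ has order $1$, $p$, or $>p$. The only organizational difference is that the paper factors the orbit directly as $O_{g_i} = \langle p\cdot g_i\rangle \cdot \{n\cdot g_i : \QRSym{n}=1,\ 1\le n\le p-1\}$ and evaluates $\chi_j$ on each factor, whereas you sum over all of $G_2$ with the stabilizer multiplicity $p^{s-k}$ and package everything into the twisted sum $T(\zeta)$; these are equivalent bookkeeping choices.
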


\begin{proof}
Clearly, $O_{g_i} = \LRGen{p\cdot g_i} \sum \{ [n \cdot g_i] \mid 1 \leq n \leq p-1, \QRSym{n} = 1 \}$. Thus,
\begin{eqnarray}
\nonumber 
\chi_j(O_{g_i}(x)) &=& \left\{ \begin{array}{cc}
	o(p\cdot g_i) \frac{p-1}{2} & \text{ if } \chi_j(g_i) = 1\\
	\omega o(p \cdot g_i) & \text{ if $\chi_j(g_i) = \eta_p^n$ where $\QRSym{n} =1$ and $1 \leq n \leq p -1$}\\ 
	\omega^{(n_0)} o(p \cdot g_i) & \text{ if $\chi_j(g_i) = \eta_p^n$ where $\QRSym{n} = -1$ and $1 \leq n \leq p -1$}\\ 
	0 & \text{ else }
\end{array}
\right.
\end{eqnarray}
Thus, $\chi_j(O_{g_i}(x) - O_{g_i}(x^{n_0})) $ equals

\begin{eqnarray}
\nonumber 
\left\{ \begin{array}{cc}
	0 & \text{ if } \chi_j(g_i) = 1\\
	(\omega - \omega^{(n_0)}) o(p \cdot g_i) & \text{ if $\chi_j(g_i) = \eta_p^n$ where $\QRSym{n} =1$ and $1 \leq n \leq p -1$}\\ 
	(\omega^{(n_0)} - \omega) o(p \cdot g_i) & \text{ if $\chi_j(g_i) = \eta_p^n$ where $\QRSym{n} = -1$ and $1 \leq n \leq p -1$}\\ 
	0 & \text{ else }
\end{array}
\right.
\end{eqnarray}
Note that by choice of $\eta_p$ in Lemma (\ref{lem2}), we have that $\omega - \omega^{(n_0)} = \sqrt{\QRSym{-1} p }$, and $\omega^{(n_0)} - \omega = - \sqrt{\QRSym{-1}p}$. Thus, the result follows.

\end{proof}

\begin{cor}
\label{cor1}
 Let $G$ be an abelian $p$-group. Then,
 \begin{eqnarray}
\nonumber
  A_{G,G_1}^2 &=& \frac{|G|}{p} I_{r,r},
 \end{eqnarray} 
 where $r$ is the number of $G_1$ orbits in $G\backslash \{ 0 \}$, 
 $I_{r,r}$ is the identity $r\times r$ matrix, and
$r= r(G)$ is the number of $G_1$ orbits of 
$\widetilde{G} = G\backslash \{ 0 \}$.
\end{cor}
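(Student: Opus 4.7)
The plan is to deduce the corollary by combining Lemma \ref{lem3} (which identifies the matrix entries $\chi_i(O_{g_j}(x) - O_{g_j}(x^{n_0}))$ with entries of $A_{G,G_1}$ up to the scalar $\sqrt{\QRSym{-1}p}$) with Lemma \ref{lem1}, Part 3 (which is the orthogonality statement $(\overline{A_0} - \overline{A_0^{(n_0)}})(A_0 - A_0^{(n_0)}) = |G| I$ arising from the character theory of $H(G,G_2)$). The whole point is that these two results, already established, dovetail to yield $A_{G,G_1}^2 = \frac{|G|}{p} I$ with essentially no extra work.

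First, I would note that under the natural bijection between $G_1$-orbit representatives and the chosen $G_2$-orbit representatives $O_{g_1}, \ldots, O_{g_r}$ (and likewise on the character side $O_{\chi_1}, \ldots, O_{\chi_r}$), the entries of $A_0 - A_0^{(n_0)}$ are exactly
\begin{eqnarray}
\nonumber
(A_0 - A_0^{(n_0)})(O_{\chi_i}, O_{g_j}) &=& \chi_i(O_{g_j}(x) - O_{g_j}(x^{n_0})).
\end{eqnarray}
By Lemma \ref{lem3}, this equals $A_{G,G_1}(\Omega_{\chi_i}, \Omega_{g_j}) \sqrt{\QRSym{-1}p}$, so as $r \times r$ matrices we have the identification $A_0 - A_0^{(n_0)} = \sqrt{\QRSym{-1}p}\, A_{G,G_1}$.

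Next, I plug this into Lemma \ref{lem1}, Part 3. Since $\overline{\sqrt{\QRSym{-1}p}} \cdot \sqrt{\QRSym{-1}p} = p$ (whether $\QRSym{-1} = 1$ and the radical is real, or $\QRSym{-1} = -1$ and it is purely imaginary), we obtain
\begin{eqnarray}
\nonumber
|G| I &=& (\overline{A_0} - \overline{A_0^{(n_0)}})(A_0 - A_0^{(n_0)}) \; = \; p \cdot \overline{A_{G,G_1}}\, A_{G,G_1}.
\end{eqnarray}
Finally, the defining formula for $A_{G,G_1}$ in Definition \ref{AGG1Def} shows that every entry is either $0$ or $\pm o(p \cdot g)$, hence an integer; so $\overline{A_{G,G_1}} = A_{G,G_1}$, and the displayed equation collapses to $A_{G,G_1}^2 = \frac{|G|}{p} I_{r,r}$.

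The only subtlety worth being careful about is the indexing convention: $A_0$ is a priori an $r \times r$ block inside the $(2r+1) \times (2r+1)$ matrix $C_{G,G_2}$ indexed by $G_2$-orbit representatives, whereas $A_{G,G_1}$ is indexed by $G_1$-orbits. Under the fixed choice of representatives $g_1, \ldots, g_r$ (one per $G_1$-orbit) the identification is canonical, but one should check that the sign ambiguity in the $A_{G,G_1}$ entries, which shifts by $\QRSym{n_0}=-1$ if one replaces $g_j$ by $n_0\cdot g_j$, is absorbed consistently by the same shift $O_{g_j} \leftrightarrow O_{g_j}^{(n_0)}$ that swaps $A_0$ and $A_0^{(n_0)}$. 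This is what makes the matching $A_0 - A_0^{(n_0)} = \sqrt{\QRSym{-1}p}\, A_{G,G_1}$ well-defined; beyond bookkeeping this, no additional obstacle appears.
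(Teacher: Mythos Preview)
Your proof is correct and follows essentially the same approach as the paper: identify $A_0 - A_0^{(n_0)} = \sqrt{\QRSym{-1}p}\,A_{G,G_1}$ via Lemma~\ref{lem3}, then substitute into Lemma~\ref{lem1} Part~3 and use $\overline{\sqrt{\QRSym{-1}p}}\cdot\sqrt{\QRSym{-1}p}=p$ together with the integrality of $A_{G,G_1}$. Your additional remarks on the indexing convention and the reality of $A_{G,G_1}$ are helpful clarifications that the paper leaves implicit.
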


\begin{proof}
By Lemma (\ref{lem3}), we have that $A_0 - A_0^{(n_0)} = \sqrt{\QRSym{-1} p} A_{G,G_1}$. Also, by Lemma (\ref{lem1}) part 3,
\begin{eqnarray}
 \nonumber
 |G|I &=& \overline{(A_0 - A_0^{(n_0)})}(A_0 - A_0^{(n_0)})\\
 \nonumber
 &=& \overline{\sqrt{\QRSym{-1}p}} \sqrt{\QRSym{-1}p} A_{G,G_1}^2\\
 \nonumber
 &=& p A_{G,G_1}^2
\end{eqnarray}
From which, the result follows.
\end{proof}

We calculate $A_{G,G_1}$ for $G=(\Z/p^s\Z)$ as an illustrative example and leave
the proof as an exercise.

\begin{prop}
\label{HeckeG2P1}
Let $G=(\Z/ p^s\Z)$. Then,
\begin{enumerate}
 \item A full set of $G_1$ orbit representatives of $G \backslash \{ 0 \}$ is given 
by $X_1 = \{ 1,p, p^2, \cdots, p^{s-1}\}$.
 \item A commutative pairing for $G$ is given by:
 \begin{eqnarray}
   \nonumber
   \theta(a)(b) &=& \eta_{p^s}^{ab},
 \end{eqnarray}
 \item By using,
 \begin{eqnarray}
 \nonumber
   X &=& \C\Omega_{p^{s-1}} \oplus \C \Omega_{p^{s-2}} \oplus \cdots \oplus \Omega_{1},\\
 \nonumber
   Y &=& \C\Omega_{\chi_{p^{s-1}}} \oplus \C \Omega_{\chi_{p^{s-2}}} \oplus \cdots \oplus \Omega_{\chi_{1}},
 \end{eqnarray}
 where $\chi_{p^i} = \theta(p_i)$, we can calculate $A_{G,G_1}$ as:
 \begin{eqnarray}
 \nonumber
   A_{G,G_1} &=& \left( \begin{array}{cccccc}
	 0& 0&0&\cdots&0&p^{s-1}\\
	0&0&0&\cdots&p^{s-2}&0\\
	\vdots&\vdots&\vdots&\cdots&\vdots&\vdots\\
	0&0&p^2&\cdots&0&0\\
	0&p&0&\cdots&0&0\\
	1&0&0&\cdots&0&0	
	\end{array}
	\right).
 \end{eqnarray}
\end{enumerate}
\end{prop}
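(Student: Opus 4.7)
My plan is to verify the three parts in order, noting that parts 1 and 2 are essentially immediate from the group structure and part 3 is the only genuine (though routine) calculation.

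For part 1, I would observe that $(\Z/p^s\Z)^*$ acts on $\Z/p^s\Z \setminus \{0\}$ by multiplication, and two nonzero elements lie in the same orbit iff they have the same $p$-adic valuation, which takes values in $\{0,1,\ldots,s-1\}$. Choosing $p^v$ as the representative of each valuation class gives $X_1$. For part 2, the map $\theta(a)(b) = \eta_{p^s}^{ab}$ is the standard self-pairing on a finite cyclic group, and the commutativity $\theta(a)(b) = \theta(b)(a)$ is immediate from $ab = ba$.

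For part 3, the crux is to evaluate $\theta(p^j)(p^i) = \eta_{p^s}^{p^{i+j}}$ and decide when this value lies in $\LRGen{\eta_p}$. It is a $p$-th root of unity iff $p^{i+j} \equiv 0 \pmod{p^{s-1}}$, i.e.\ $i+j \geq s-1$. I would split into three cases: if $i+j < s-1$, the value is not a $p$-th root of unity and the entry is $0$; if $i+j \geq s$, the value equals $1 = \eta_p^0$, so $\QRSym{0}=0$ and the entry is again $0$; and if $i+j = s-1$, then after fixing $\eta_p := \eta_{p^s}^{p^{s-1}}$ consistently with Lemma \ref{lem2}, the value is precisely $\eta_p$, giving $n=1$, $\QRSym{1}=1$, and $o(p \cdot p^i) = o(p^{i+1}) = p^{s-1-i} = p^j$.

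Thus the only non-zero entries of $A_{G,G_1}$ sit on the index set $\{(j,i) : i+j = s-1\}$, each with value $p^j$. Under the displayed ordering of the bases for $X$ and $Y$ (largest power of $p$ first), the row indexed by $\chi_{p^j}$ and the column indexed by $p^i$ satisfy $i+j = s-1$ exactly on the anti-diagonal, and $p^j$ runs from $p^{s-1}$ in the top-right corner down to $p^0 = 1$ in the bottom-left corner, matching the displayed matrix. The only delicate point --- the ``main obstacle'' such as it is --- is keeping the index and orientation conventions aligned and choosing $\eta_p$ so that the Legendre symbol picked up is $+1$ rather than $-1$; the computation itself is routine.
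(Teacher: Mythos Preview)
The paper explicitly leaves the proof of this proposition as an exercise (``We calculate $A_{G,G_1}$ for $G=(\Z/p^s\Z)$ as an illustrative example and leave the proof as an exercise''), so there is no argument in the paper to compare against. Your verification is correct and is precisely the intended one: parts 1 and 2 are immediate from the structure of $\Z/p^s\Z$, and part 3 is the direct computation $\theta(p^j)(p^i)=\eta_{p^s}^{p^{i+j}}$ followed by the three-way case split on $i+j$ relative to $s-1$, yielding the single nonzero entry $o(p^{i+1})=p^{s-1-i}=p^j$ on the anti-diagonal $i+j=s-1$. Your remark about aligning $\eta_p$ with $\eta_{p^s}^{p^{s-1}}$ so that the Legendre symbol comes out $+1$ is well taken; the displayed matrix tacitly assumes this compatibility.
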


We close this section by noting that $A_{G,G_1}$ is an $Aut(G)$ map. Clearly, $A_{G,G_1}$ is map defined on the set
$Y \times X$, where $X = \{ \Omega_{g_1}, \ldots, \Omega_{g_r}\}$ and $ Y = \{ \Omega_{\chi_1}, \ldots, \Omega_{\chi_r} \}$. 
We define an action of $Aut(G)$ on $X$ by,
\begin{eqnarray}
 \nonumber
 \rho_X(\sigma) (\Omega_{g_i}) &=& \QRSym{n} \Omega_{g_j} ,
\end{eqnarray}
where $\sigma(g_i) = n \cdot g_j$. Clearly, this action is a representation of $Aut(G)$ on $GL(\bigoplus \{ \C \Omega_{g_i} \mid i = 1, \ldots, r \} )$.
Similary, we define the action of $Aut(G)$ on $Y$ by,
\begin{eqnarray}
 \nonumber
 \rho_{Y}(\sigma) (\Omega_{\chi_i}) &=& \QRSym{n} \Omega_{\chi_j},
\end{eqnarray}
where $\sigma \cdot \chi_i = n \cdot \chi_j$; and for general $\sigma \in Aut(G)$, we define $\sigma \cdot \chi = \chi \circ \sigma^{-1}$. Clearly,
this action is a representation of $Aut(G)$ on $GL(\bigoplus \{ \C \Omega_{\chi_i} \mid i = 1, \ldots, r \} )$.

We leave the next proposition as an exercise to the reader. 
\begin{prop}
\label{prop2_1}
 Let $\sigma \in Aut(G)$. Then,
\begin{enumerate}
 \item There is a unique $\gamma \in Aut(G)$ such that for all $g \in G$, $\theta(\gamma(g))(g') = \theta(g,\sigma(g'))$. We will denote by $\gamma$ by $\sigma^*$. 
 \item The map $\sigma \rightarrow \sigma^*$ is an involution of the $Aut(G)$ that commutes with the map $\sigma \rightarrow \sigma^{-1}$.
 \item The matrix $A_{G,G_1}$ is an $Aut(G)$-map. That is, $\rho_{Y}(\sigma) A_{G,G_1} = A_{G,G_1} \rho_{X}(\sigma)$.
 \item The following holds, $\rho_{Y} (\sigma) = \rho_X( (\sigma^*)^{-1} )$.
\end{enumerate}
\end{prop}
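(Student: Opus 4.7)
The plan is to unwind the definition of $\sigma^*$ through the symmetric bilinear pairing $\theta$, then deduce the matrix intertwining statement from the pointwise adjoint relation. Parts (1), (2), (4) are essentially definitional; the real content is that $A_{G,G_1}$ is forced to be equivariant.

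For Part (1), I would set $\sigma^*(g) := \theta^{-1}(\theta(g)\circ\sigma)$. The map $g'\mapsto \theta(g)(\sigma(g'))$ is a character of $G$ because $\sigma$ is a homomorphism and $\theta(g)\in\overline{G}$, so the isomorphism $\theta$ produces a unique $\gamma=\sigma^*(g)\in G$ with $\theta(\gamma)(g')=\theta(g)(\sigma(g'))$. Linearity of $\sigma^*$ follows by uniqueness from the character identity $\theta(g_1+g_2)(\sigma(g'))=\theta(g_1)(\sigma(g'))\theta(g_2)(\sigma(g'))$, and bijectivity is inherited from $\sigma$. For Part (2) I would first derive the anti-homomorphism property $(\tau\sigma)^*=\sigma^*\tau^*$ by the chain $\theta(\sigma^*\tau^*(g))(g')=\theta(\tau^*(g))(\sigma(g'))=\theta(g)(\tau\sigma(g'))$, then use the commutative-pairing hypothesis $\theta(a)(b)=\theta(b)(a)$ twice to compute $\theta((\sigma^*)^*(g))(g')=\theta(g)(\sigma^*(g'))=\theta(\sigma^*(g'))(g)=\theta(g')(\sigma(g))=\theta(\sigma(g))(g')$, giving $(\sigma^*)^*=\sigma$. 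Combining involution with the anti-homomorphism yields $(\sigma^{-1})^*=(\sigma^*)^{-1}$.

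For Part (4), I would compute directly: since $\sigma\cdot\chi_i = \chi_i\circ\sigma^{-1}$ and $\chi_i=\theta(g_i)$, we get $\sigma\cdot\chi_i = \theta(g_i)\circ\sigma^{-1} = \theta((\sigma^{-1})^*(g_i)) = \theta((\sigma^*)^{-1}(g_i))$. Hence, under the identification $\chi_i\leftrightarrow g_i$, the action of $\sigma$ on $\overline{G}$ is transported to the action of $(\sigma^*)^{-1}$ on $G$, with the same quadratic-residue signs arising when one rewrites $(\sigma^*)^{-1}(g_i) = m\cdot g_j$ against a chosen orbit representative. This gives $\rho_Y(\sigma)=\rho_X((\sigma^*)^{-1})$.

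For Part (3), using Part (4) the claim becomes $\rho_X((\sigma^*)^{-1})\, A_{G,G_1} = A_{G,G_1}\, \rho_X(\sigma)$, which I would verify by comparing matrix entries. The entry $A_{G,G_1}(\Omega_{\chi_j},\Omega_{g_i})$ depends only on the scalar $\theta(g_j)(g_i)$ (through its quadratic character mod $p$) and the order $o(p\cdot g_i)$. Writing $\sigma(g_i)=n\cdot g_{i'}$ and $(\sigma^*)^{-1}(g_j)=m\cdot g_{j'}$, the defining relation of $\sigma^*$ gives $\theta(g_{j'})(g_{i'})= \theta((\sigma^*)^{-1}(g_j)/m)(\sigma(g_i)/n) $, and after accounting for the multipliers the pairing value transforms by $\QRSym{n}\QRSym{m}$, which exactly matches the sign coming from both $\rho_X$ actions. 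The invariance $o(p\cdot\sigma(g_i))=o(p\cdot g_i)$ under automorphisms takes care of the order factor. The main obstacle is this bookkeeping of quadratic-residue signs when moving between orbit representatives; the key consistency is that the pairing $\theta(g)(g')$, reduced mod $p$ to extract its quadratic-residue class, is well-defined on orbit pairs up to the product $\QRSym{n}\QRSym{m}$, and this is exactly what $\rho_X$ and $\rho_Y$ are designed to cancel.
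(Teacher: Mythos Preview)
The paper explicitly leaves this proposition as an exercise to the reader and provides no proof of its own, so there is nothing to compare against. Your argument is correct: Parts (1), (2), and (4) are unwound accurately from the symmetric pairing $\theta(a)(b)=\theta(b)(a)$, and your entrywise verification of Part (3) is the natural route. The one place worth tightening is the sign bookkeeping in Part (3): from $\sigma(g_i)=n\cdot g_{i'}$ and $(\sigma^*)^{-1}(g_j)=m\cdot g_{j'}$ one gets $\theta(g_j)(g_i)=\theta(g_{j'})(g_{i'})^{mn}$, so the quadratic-residue class of the pairing changes by exactly $\QRSym{m}\QRSym{n}$, and since $m,n\in(\Z/p^s\Z)^*$ the order of $\theta(g_j)(g_i)$ as a root of unity is preserved (so the ``else $0$'' case is stable). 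Together with $o(p\cdot g_i)=o(p\cdot g_{i'})$ this gives the intertwining identity cleanly.
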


\section{Results for Quadratic Residue Slices}

Using a similar technique as Chen-Sehgal-Xiang in \cite{Xia1}, we calculate the character value of a general QRS.

\begin{prop}
\label{prop5}
Let $D$ be a QRS in $G$. Then,
\begin{eqnarray}
\nonumber
 \chi(D) &=& \frac{-1 + (2 a_{\chi} + 1)\sqrt{\QRSym{-1}p} }{2},
\end{eqnarray}
where $a_{\chi} \in \Z$. We will call the number $2 a_{\chi} + 1$ the Difference Coefficient of $D$ in $G$ evaluated at $\chi$, and we will denote this number by
$d_G(\chi, D)$. 
\end{prop}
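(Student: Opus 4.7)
The plan is to use the skew condition together with $G_2$-invariance to write $D$ as a signed sum of $G_2$-orbits, then evaluate $\chi$ using Lemma \ref{lem3}.

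First I would observe that since $D$ is invariant under the quadratic residues mod $p^s$, the set $D$ is a union of $G_2$-orbits on $G \backslash \{0\}$. The skew condition $D(x) + D(x^{n_0}) = G(x) - [1]$ then forces, for each $G_1$-orbit $\Omega_{g_i} = O_{g_i} \cup O_{g_i}^{(n_0)}$, that $D$ contains exactly one of $O_{g_i}$, $O_{g_i}^{(n_0)}$. Introducing signs $d_i \in \{+1,-1\}$ that record which orbit is chosen, I can write
\begin{eqnarray}
\nonumber
D(x) &=& \tfrac{1}{2}(G(x) - [1]) + \tfrac{1}{2}\sum_{i=1}^{r} d_i\bigl(O_{g_i}(x) - O_{g_i}(x^{n_0})\bigr).
\end{eqnarray}

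Next I would apply the nonprincipal character $\chi$. Since $\chi(G) = 0$, this gives
\begin{eqnarray}
\nonumber
\chi(D) &=& -\tfrac{1}{2} + \tfrac{1}{2}\sum_{i=1}^{r} d_i\, \chi\bigl(O_{g_i}(x) - O_{g_i}(x^{n_0})\bigr).
\end{eqnarray}
By Lemma \ref{lem3}, each bracket equals $A_{G,G_1}(\Omega_\chi,\Omega_{g_i})\sqrt{\QRSym{-1}p}$, and the entries of $A_{G,G_1}$ are rational integers. Hence
\begin{eqnarray}
\nonumber
\chi(D) &=& \frac{-1 + c\sqrt{\QRSym{-1}p}}{2}, \qquad c = \sum_{i=1}^{r} d_i A_{G,G_1}(\Omega_\chi,\Omega_{g_i}) \in \Z.
\end{eqnarray}

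The main obstacle, and the only step needing care, is showing that the integer $c$ is odd so it can be written as $2 a_\chi + 1$. For this I would rewrite $\chi(D)$ in terms of the algebraic integer $\omega = \frac{-1+\sqrt{\QRSym{-1}p}}{2}$ of Lemma \ref{lem2}: since $\sqrt{\QRSym{-1}p} = 2\omega + 1$, one obtains
\begin{eqnarray}
\nonumber
\chi(D) &=& \frac{c-1}{2} + c\,\omega.
\end{eqnarray}
Because $\chi(D)$ is a sum of roots of unity, it is an algebraic integer in $\Z[\eta_{p^s}]$, while $\omega$ is an algebraic integer with minimal polynomial over $\Z$ of degree $2$. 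Therefore $\frac{c-1}{2}$ must be a rational integer, i.e., $c$ is odd. Setting $a_\chi = \frac{c-1}{2}$ completes the proof.
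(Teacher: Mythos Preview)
Your proof is correct but takes a genuinely different route from the paper's.

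The paper argues abstractly: since $\chi(D)$ is a $G_2$-fixed algebraic integer in $\Q(\eta_{p^s})$, it lies in the ring of integers $\Z \oplus \Z\omega$ of $\Q(\omega)$, so one may write $\chi(D) = a_\chi + b_\chi\omega$ with $a_\chi, b_\chi \in \Z$ from the outset. Applying the Galois automorphism $n_0$ gives $\chi(D^{(n_0)}) = a_\chi + b_\chi\omega^{(n_0)}$, and the skew condition $\chi(D) + \chi(D^{(n_0)}) = -1$ together with $\omega + \omega^{(n_0)} = -1$ forces $b_\chi = 2a_\chi + 1$. No computation with $A_{G,G_1}$ or Lemma~\ref{lem3} is needed.

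Your route is more explicit: you decompose $D$ into signed $G_2$-orbits, invoke Lemma~\ref{lem3} to evaluate $\chi$, and obtain $c = \sum_i d_i A_{G,G_1}(\Omega_\chi,\Omega_{g_i})$ directly. You then recover oddness of $c$ by the clean observation that $\chi(D) - c\omega = \tfrac{c-1}{2}$ is a rational algebraic integer. This has the pleasant side effect of simultaneously proving Corollary~\ref{cor_qrs_1} (that $A_{G,G_1}d$ computes the difference coefficients), which the paper establishes separately. The paper's approach, by contrast, is shorter and uses only the description of the ring of integers of the quadratic subfield.

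One small technical remark: Lemma~\ref{lem3} is stated for the chosen $G_2$-orbit representatives $\chi_j$. For an arbitrary nonprincipal $\chi$ the identity holds up to a global sign, depending on which $G_2$-half of $\Omega_\chi$ contains $\chi$; this does not affect your argument, since replacing $c$ by $-c$ preserves both integrality and oddness.
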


\begin{proof}
Clearly, $\chi(D) \in \Q(\eta_{p^s})$ where $exp(G) = p^s$. Also, $\chi(D)$ is an algebraic integer that is fixed under the action of the quadratic residues mod $p^s$. That is,
$\chi(D) \in Fix(G_2)$ the fixed field of $G_2$. It can be shown that $Fix(G_2) = \Q(\omega)$, and that the ring of integers of $\Q(\omega)$ is given by $\Z \oplus \Z \omega$. Thus,
\begin{eqnarray}
 \nonumber
 \chi(D) &=& b_{\chi} + a_{\chi}\omega
\end{eqnarray}
Because $n_0$ is the non-trivial Galois automorphism of $\Q(\omega)$, we must also have:
\begin{eqnarray}
 \nonumber
 \chi(D^{(n_0)}) &=& a_{\chi} + b_{\chi}\omega^{(n_0)}
\end{eqnarray}
Because $D(x) + D(x^{n_0}) = G(x) - [1]$ and $\omega + \omega^{(n_0)} = -1$, we must have that $2 a_{\chi} - b_{\chi} = -1$. Thus, $b_{\chi} = 2 a_{\chi}  + 1$ and,
\begin{eqnarray}
\nonumber
 \chi(D) &=& a_{\chi} + (2 a_{\chi} + 1) \omega \\
\nonumber
 &=& \frac{-1 + (2 a_{\chi} + 1)\sqrt{\QRSym{-1}p} }{2}.
\end{eqnarray}
Therefore, the result follows.
\end{proof}

We introduce a $\pm 1$ representation of a QRS $D$.
\begin{df}
Let $D$ be a QRS in $G$. Let $\{ O_{g_1}, \ldots , O_{g_r}, O_{g_1}^{(n_0)}, \ldots, O_{g_r}^{(n_0)} \}$ be the $G_2$ orbits of $G \backslash \{ 0 \}$. Then,
\begin{eqnarray}
 \nonumber
 D(x) - D(x^{n_0}) &=& d_1 (O_{g_1}(x) - O_{g_1}(x^{n_0})) + \ldots + d_r (O_{g_r}(x) - O_{g_r}(x^{n_0}) ),
\end{eqnarray}
where,
\begin{eqnarray}
 \nonumber
 d_i &=& \left\{ \begin{array}{cc}
	1 & \text{ if $g_i \in D$ },\\
	-1 & \text{ if $g_i \in D^{(n_0)}$ }.
	\end{array} \right.
\end{eqnarray}
We will call the vector $d=[d_1, \ldots, d_r]^t$ the $\pm 1$s representation of $D$.
\end{df}

\begin{cor}
\label{cor_qrs_1}
Let $D$ be a QRS in $G$ and define $df(D) = [ d_G(\chi_1,D), \ldots, d_G(\chi_r,D)]^t$. Then, $A_{G,G_1} d = df(D)$.
\end{cor}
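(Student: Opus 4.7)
The plan is to chain together Proposition \ref{prop5}, Lemma \ref{lem3}, and the definition of the $\pm 1$ representation. The proof is essentially an exercise in applying a non-principal character to both sides of the defining identity for $d$ and then reading off the coefficient of $\sqrt{\QRSym{-1}p}$.

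First, I fix a coordinate $j$ and apply $\chi_j$ to the identity
\begin{eqnarray}
\nonumber
D(x) - D(x^{n_0}) &=& \sum_{i=1}^{r} d_i \bigl( O_{g_i}(x) - O_{g_i}(x^{n_0}) \bigr).
\end{eqnarray}
By Lemma \ref{lem3}, each summand on the right contributes $d_i A_{G,G_1}(\Omega_{\chi_j}, \Omega_{g_i}) \sqrt{\QRSym{-1}p}$, so the right-hand side becomes $\sqrt{\QRSym{-1}p} \cdot (A_{G,G_1} d)_j$. The left-hand side, by Proposition \ref{prop5} together with the fact that applying $\chi_j$ to $D^{(n_0)}$ is the Galois conjugate that sends $\sqrt{\QRSym{-1}p}$ to $-\sqrt{\QRSym{-1}p}$, equals
\begin{eqnarray}
\nonumber
\chi_j(D) - \chi_j(D^{(n_0)}) &=& \frac{-1 + d_G(\chi_j,D)\sqrt{\QRSym{-1}p}}{2} - \frac{-1 - d_G(\chi_j,D)\sqrt{\QRSym{-1}p}}{2} \\
\nonumber
&=& d_G(\chi_j, D) \sqrt{\QRSym{-1}p}.
\end{eqnarray}

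Comparing the two sides and dividing by the nonzero scalar $\sqrt{\QRSym{-1}p}$ yields $d_G(\chi_j, D) = (A_{G,G_1} d)_j$, which is the $j$th coordinate of the claimed identity $A_{G,G_1} d = df(D)$. Since $j$ was arbitrary, the corollary follows.

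There is no real obstacle here; the only mild subtlety is justifying that $\chi_j(D^{(n_0)})$ is the Galois conjugate of $\chi_j(D)$ under $n_0$, but this has already been used freely earlier in the paper (e.g., in the proof of Corollary \ref{corDual}) and follows from a direct calculation $\chi_j(D^{(n_0)}) = \sum_{g \in D} \chi_j(n_0 g) = \chi_j(D)^{(n_0)}$ on the roots of unity appearing in the character sum.
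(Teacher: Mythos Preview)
Your proof is correct and follows essentially the same approach as the paper: apply a non-principal character $\chi_j$ to the defining identity for the $\pm 1$ representation, evaluate the right-hand side via Lemma \ref{lem3} and the left-hand side via Proposition \ref{prop5}, and cancel the common factor $\sqrt{\QRSym{-1}p}$. The only difference is cosmetic---you spell out the computation of $\chi_j(D) - \chi_j(D^{(n_0)})$ explicitly and swap the roles of the indices $i$ and $j$.
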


\begin{proof}
 Clearly, by Proposition (\ref{prop5}), 
\begin{eqnarray}
 \nonumber
 \chi_i(D(x) - D(x^{n_0})) &=& d_G(\chi_i, D) \sqrt{\QRSym{-1}p}.
\end{eqnarray}
Also, note that,
\begin{eqnarray}
 \nonumber
 \chi_i(D(x)-D(x^{n_0})) &=& \sum_{j=1}^r d_i \chi_i (O_{g_j}(x) - O_{g_j}(x^{n_0}))\\
 \nonumber
 &=& \sum_{j=1}^r d_j A_{G,G_1}(\Omega_{\chi_i}, \Omega_{g_j}) \sqrt{\QRSym{-1}p}
\end{eqnarray}
Where we have used Lemma (\ref{lem3}). Clearly, the result follows from the previous equations.
\end{proof}

We proceed to show the $p$ divisibility conditions of the Difference Coefficients of a QRS $D$.

\begin{prop}
\label{prop6}
Let $D$ be a QRS in $G$, where $|G| = p^{\beta}$ and $exp(G) = p^s$. Then,
\begin{enumerate}
 \item Let $p^k$ divide $d_G(\chi_i, D)$ for $i=1,\ldots, r$. Then, $2k +1 \leq \beta$.
 \item Assume $\beta=2\alpha +1$ is odd. The set $D$ is a GSHDS if and only if $p^{\alpha}$ divides $d_G(\chi_i,D)$ for $i=1,\ldots,r$.
\end{enumerate}
\end{prop}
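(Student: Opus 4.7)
The plan is to derive both parts from a single Parseval identity applied to $D-D^{(n_0)}$, regarded as a $\pm 1$-valued function on $G\setminus\{0\}$.

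By the skew condition, $D$ and $D^{(n_0)}$ partition $G\setminus\{0\}$, so $\sum_{g\in G}(D-D^{(n_0)})(g)^2=|G|-1$. On the Fourier side $\chi_0(D-D^{(n_0)})=0$, while Proposition \ref{prop5} together with $\chi(D)+\chi(D^{(n_0)})=-1$ gives $\chi(D-D^{(n_0)})=d_G(\chi,D)\sqrt{\QRSym{-1}p}$ for nonprincipal $\chi$, whence $|\chi(D-D^{(n_0)})|^2=p\,d_G(\chi,D)^2$. Parseval therefore yields $p\sum_{\chi\neq\chi_0}d_G(\chi,D)^2=|G|(|G|-1)$. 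Since $d_G(\chi,D)^2$ is constant on each $G_1$-orbit of $\overline{G}$, and the symmetric pairing $\theta$ forces $|\Omega_{\chi_i}|=|\Omega_{g_i}|$ (the $G_1$-stabilizers of $g_i$ and $\chi_i=\theta(g_i)$ are both characterized by $(n-1)g_i=0$), this regroups as
\begin{eqnarray}
\nonumber
\sum_{i=1}^{r}|\Omega_{g_i}|\,d_G(\chi_i,D)^2 &=& \frac{|G|(|G|-1)}{p}.
\end{eqnarray}

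For Part 1, each $d_G(\chi_i,D)=2a_i+1$ is a nonzero odd integer, so $p^k\mid d_G(\chi_i,D)$ implies $d_G(\chi_i,D)^2\geq p^{2k}$. Using $\sum_{i=1}^{r}|\Omega_{g_i}|=|G|-1$, the identity above forces $p^{2k}(|G|-1)\leq|G|(|G|-1)/p$, i.e., $|G|\geq p^{2k+1}$.

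For Part 2, the forward direction is immediate from Proposition \ref{PropCharValue}, which gives $d_G(\chi_i,D)=\epsilon_{\chi_i}p^\alpha$. For the converse, assume $p^\alpha\mid d_G(\chi_i,D)$ for every $i$. Under $|G|=p^{2\alpha+1}$, the right-hand side of the Parseval identity reduces to $p^{2\alpha}(|G|-1)$, which coincides exactly with the lower bound obtained by summing the pointwise estimate $|\Omega_{g_i}|\,d_G(\chi_i,D)^2\geq|\Omega_{g_i}|\,p^{2\alpha}$; equality is therefore forced termwise, yielding $d_G(\chi_i,D)=\pm p^\alpha$. Consequently $\chi_i(D)=(-1\pm\sqrt{\QRSym{-1}v})/2$, and a short calculation using the skew condition shows $\chi(D)\chi(D^{(n_0)})=(1-\QRSym{-1}v)/4$ is the same constant for every nonprincipal $\chi$. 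Thus $D(x)D(x^{n_0})$ lies in the span of $[1]$ and $G(x)$, which together with the skew condition is the defining algebraic relation of a GSHDS. The only delicate ingredient in the whole argument is the orbit-size identity $|\Omega_{\chi_i}|=|\Omega_{g_i}|$, but this is essentially forced by the symmetry assumption on $\theta$; the remainder is bookkeeping around Parseval.
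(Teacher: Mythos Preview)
Your proof is correct and follows essentially the same line as the paper's: both derive the key identity
\[
\sum_{i=1}^{r}|\Omega_{g_i}|\,d_G(\chi_i,D)^2 \;=\; \frac{|G|(|G|-1)}{p},
\]
and then read off Parts 1 and 2 from it exactly as you do. The only difference is packaging: the paper obtains this identity by computing the coefficient of $[1]$ in $(D-D^{(n_0)})^2$ and passing through the $C_{G,G_1}$ matrix via Lemma~\ref{lem1}, whereas you invoke Parseval directly on $D-D^{(n_0)}$. Your route is the more transparent one, and the orbit-size equality $|\Omega_{\chi_i}|=|\Omega_{g_i}|$ is indeed immediate from the symmetry of $\theta$ as you indicate.
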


\begin{proof}
Let $\{ \Omega_{g_0}, \Omega_{g_1}, \ldots, \Omega_{g_r} \}$ be a basis for $H(G,G_1)$ where $g_0 = 0$. 
Consider the element $d(D)(x) = (D(x) - D(x^{n_0}))( D(x) - D(x^{n_0}))$. Clearly, $d(D) \in H(G,G_1)$. Thus, 
\begin{eqnarray}
 \nonumber
 d(D)(x) &=& \sum_{i=0}^r a_{g_i} \Omega_{g_i}(x)
\end{eqnarray} 
Where $a_{g_i} \in \Z$.
Note that for
$\chi_i = \theta(g_i)$, we have
\begin{eqnarray}
 \nonumber
 \chi_i(d(D)) &=& (d_G(\chi_i,D) \sqrt{\QRSym{-1} p} )^2 \\
\nonumber
 &=& \QRSym{-1} p d_G(\chi_i,D)^2.
\end{eqnarray}
Let $a(D) = [0,d_G(\chi_1,D)^2,\ldots, d_G(\chi_r,D)^2]^t$. Clearly,
 \begin{eqnarray}
\nonumber
  C_{G,G_1} \overline{d(G)} &=& p \QRSym{-1} a(D)
 \end{eqnarray}
Where $\overline{d(G)} = [ a_{g_0}, \ldots, a_{g_r}]^t$. Multiply the previous equation by $j_{G_1} = [|\Omega_{g_0}|, \ldots, |\Omega_{g_r}|]^t$. By Lemma (\ref{lem1}), we have that
\begin{eqnarray}
 \nonumber
 |G| a_{g_0} &=& |G| \LRGen{[1,0,\ldots,0]^t,\overline{d(G)} } \\
 \nonumber
 &=& p \QRSym{-1} \LRGen{j_{G_1}, a(D)} 
\end{eqnarray}
A direct calculation yields $a_{g_0} = d(D)(0) = \QRSym{-1} ( |G| - 1)$. Also, by assumption, $a(D) = p^{2k} b$ for some vector $b$ of positive integers. Thus,
\begin{eqnarray}
\label{eqn3_1}
 |G| \QRSym{-1} ( |G| -1 ) &=& p \QRSym{-1} p^{2k} \LRGen{j_{G_1}, b}
\end{eqnarray}
Hence, it follows that $p^{2k +1}$ divides $|G|$. Thus, part 1 follows.

Now we show part 2. Assume that $\beta = 2\alpha + 1$ and that $D$ is a GSHDS. By Proposition(\ref{PropCharValue}), $d_G(\chi_i,D)  = p^{\alpha} \epsilon_{\chi_i}$ where $\epsilon_{\chi_i} \in \{ -1, 1\}$.
Clearly, $p^{\alpha}$ divides $d_G(\chi_i,D)$ for $i=1,\ldots,r$. 

Now, assume that $p^{\alpha}$ divides $d_G(\chi_i,D)$ for $i=1,\ldots,r$. It suffices to show that $d_G(\chi_i,D) = \pm p^{\alpha}$, because then it will follow that for arbitrary nonprincipal character $\chi$, 
\begin{eqnarray}
\nonumber
 \chi(D) \chi(D^{(n_0)}) &=& \frac{ 1 - \QRSym{-1} v }{4}
\end{eqnarray}
Thus, by Fourier inversion,
\begin{eqnarray}
 \nonumber
  D(x) D(x^{n_0}) &=& k_0 [1] + \lambda D(x) + \lambda D(x^{n_0}),
\end{eqnarray}
where $k_0 = \frac{ 1 - \QRSym{-1}}{2} \frac{v-1}{2}$ and $\lambda = \frac{ v -2 + \QRSym{-1}}{4}$. Which implies that $D$ is a GSHDS.

We show that $d_G(\chi_i,D) = \pm p^{\alpha}$. Consider Equation (\ref{eqn3_1}). Since $k = \alpha$, we must have,
\begin{eqnarray}
\nonumber
 p^{2\alpha +1 } \QRSym{-1} (|G| - 1) &=& p^{2\alpha + 1} \QRSym{-1} \LRGen{j_{G_1}, b}
\end{eqnarray}
Since $\LRGen{j_{G_1}, j} = |G| -1$, we must have $ \LRGen{j_{G_1}, j} = \LRGen{j_{G_1},b}$. This forces $b= j$ because $b$ is a vector of positive integers. In particular, this shows that $d_G(\chi_i,D)^2 = p^{2\alpha}$. The result follows.
\end{proof}

Proposition (\ref{prop6}) motivates the definition of the $p$-divisibility $\nu_p(D)$ of a general QRS $D$.
\begin{df}
Let $D$ be a QRS in $G$, and $d$ the corresponding representation of $\pm 1$s. Then, the $p$-divisibility $\nu_p(D)$ of $D$ is the integer $k$ such that
$p^k$ divides all entries of the vector $A_{G,G_1} d$. 
\end{df}

\begin{cor}
\label{qrs_cor2}
Let $G$ be an abelian group of order $p^{\beta}$ and $D$ a QRS in $G$. Then,
\begin{enumerate}
 \item If $\beta = 2\alpha$, then $v_p(D) \leq \alpha - 1$. 
 \item If $\beta = 2\alpha + 1$, then $v_p(D) \leq \alpha$, and $v_p(D) = \alpha$ if and only if $D$ is a GSHDS.
\end{enumerate}
\end{cor}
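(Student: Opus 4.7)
The corollary is essentially a direct translation of Proposition \ref{prop6} into the language of the $p$-divisibility invariant $\nu_p(D)$, so the plan is short. The key observation to invoke is Corollary \ref{cor_qrs_1}, which identifies $A_{G,G_1} d$ with the vector $df(D) = [d_G(\chi_1,D), \ldots, d_G(\chi_r,D)]^t$. Therefore the condition ``$p^k$ divides every entry of $A_{G,G_1}d$'' appearing in the definition of $\nu_p(D)$ is literally the same as the condition ``$p^k \mid d_G(\chi_i,D)$ for all $i=1,\ldots,r$'' used in Proposition \ref{prop6}.

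For Part 1, I would suppose $\beta = 2\alpha$ and let $k = \nu_p(D)$. Then by the identification above, $p^k$ divides every $d_G(\chi_i,D)$, and Proposition \ref{prop6}(1) gives the inequality $2k + 1 \leq \beta = 2\alpha$. Rearranging and using that $k$ is an integer forces $k \leq \alpha - 1$, which is the claim.

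For Part 2, I would again set $k = \nu_p(D)$ with $\beta = 2\alpha + 1$. The same application of Proposition \ref{prop6}(1) yields $2k + 1 \leq 2\alpha + 1$, hence $k \leq \alpha$. For the ``if and only if'' statement, observe that $\nu_p(D) = \alpha$ is equivalent (given the upper bound just established) to $p^\alpha$ dividing every $d_G(\chi_i,D)$. By Proposition \ref{prop6}(2), this latter divisibility is precisely the criterion for $D$ to be a GSHDS, which completes the equivalence.

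There is no real obstacle in this argument, since all the work is already done in Proposition \ref{prop6}; the only thing to be careful about is remembering that $\nu_p(D)$ is defined as the largest such $k$ (so that proving $k \leq \alpha - 1$ or $k \leq \alpha$ for every valid $k$ indeed bounds the maximum), and translating between the $A_{G,G_1}d$ formulation of the definition and the $d_G(\chi_i,D)$ formulation of Proposition \ref{prop6} via Corollary \ref{cor_qrs_1}.
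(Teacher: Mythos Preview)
Your proposal is correct and matches the paper's intended reasoning: the paper states this result as an immediate corollary of Proposition~\ref{prop6} without giving a separate proof, and your argument is exactly the routine unpacking of that proposition via the identification in Corollary~\ref{cor_qrs_1} and the definition of $\nu_p(D)$.
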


We leave as an open problem the characterization of QRSs $D$ in $G$ with maximal value of $v_p(D)$ whenever $|G| = p^{2\alpha}$. 
The next proposition was originally formulated in \cite{Xia1} in the context of SHDSs to prove the Chen-Sehgal-Xiang exponent bound.

\begin{prop}
\label{prop7}
Let $G$ be an abelian $p$-group with $exp(G) = p^s$. Define $G_l = \{ g \in G \mid p^l \cdot g = 0 \}$. Let $\chi$ be a non-principal character of $G$. Then,
\begin{eqnarray}
 \nonumber
 d_G(\chi,D) &=& d_{G_l}(\chi\mid_{G_l}, G_l) \text{ mod } p^l
\end{eqnarray}
\end{prop}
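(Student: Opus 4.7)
The plan is to use the $\pm 1$ expansion from Corollary~\ref{cor_qrs_1} together with the explicit formula for $A_{G,G_1}$ given by Lemma \ref{lem3}, and then split the sum over $G_2$-orbits into those sitting inside $G_l$ and those outside $G_l$. The key numerical observation is that entries of $A_{G,G_1}$ indexed by orbits outside $G_l$ are automatically divisible by $p^l$ because of their factor $o(p\cdot g)$.

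First I would verify that $D' := D\cap G_l$ is a QRS in $G_l$, so that $d_{G_l}(\chi\mid_{G_l}, D')$ is defined in the first place. The subgroup $G_l$ is characteristic in $G$, hence $G_1$-invariant and in particular $\mu_{n_0}$-invariant, so intersecting the skew identity with $G_l$ yields $D'(x) + D'(x^{n_0}) = G_l(x) - [1]$. Since the reduction $(\Z/p^s\Z)^{*2} \twoheadrightarrow (\Z/p^l\Z)^{*2}$ is surjective for $p$ odd, $D'$ is also invariant under the quadratic residues mod $p^l = \exp(G_l)$, so $D'$ is a QRS in $G_l$.

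Next, using the $\pm 1$ expansion $D(x) - D(x^{n_0}) = \sum_i d_i (O_{g_i}(x) - O_{g_i}^{(n_0)}(x))$ and Lemma~\ref{lem3}, I get
\begin{eqnarray}
\nonumber
d_G(\chi, D) &=& \sum_i d_i\, A_{G,G_1}(\Omega_\chi, \Omega_{g_i}).
\end{eqnarray}
I would then split this sum according to whether $g_i \in G_l$ or $g_i \notin G_l$. For the outside part, if $g_i \notin G_l$ then $o(g_i) = p^m$ with $m > l$, hence $o(p\cdot g_i) = p^{m-1} \geq p^l$; since every entry $A_{G,G_1}(\Omega_\chi, \Omega_{g_i})$ is either $0$ or $\pm o(p\cdot g_i)$ by Definition~\ref{AGG1Def}, each such summand is divisible by $p^l$.

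For the inside part, the observation is that $A_{G,G_1}$ restricted to orbits inside $G_l$ coincides with $A_{G_l,(G_l)_1}$: for $g \in G_l$ the action of $(\Z/p^s\Z)^*$ factors through $(\Z/p^l\Z)^*$, so the $G_1$-orbit of $g$ in $G$ equals its $(G_l)_1$-orbit in $G_l$; the value $\chi(g)$ depends only on $\chi\mid_{G_l}$; and $o(p\cdot g)$ is intrinsic to $g$. Consequently
\begin{eqnarray}
\nonumber
\sum_{g_i \in G_l} d_i\, A_{G,G_1}(\Omega_\chi, \Omega_{g_i}) &=& \sum_{g_i \in G_l} d_i\, A_{G_l,(G_l)_1}(\Omega_{\chi\mid_{G_l}}, \Omega_{g_i}) \;=\; d_{G_l}(\chi\mid_{G_l}, D'),
\end{eqnarray}
where the second equality is Corollary~\ref{cor_qrs_1} applied inside $G_l$ to $D'$. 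Combining the two parts yields $d_G(\chi, D) \equiv d_{G_l}(\chi\mid_{G_l}, D') \pmod{p^l}$, as the outside piece is an integer multiple of $p^l$.

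The main obstacle, and really the only subtlety, is the compatibility check in the previous paragraph: that the combinatorics of orbits and the matrix $A$ behave functorially under the inclusion $G_l \hookrightarrow G$. Once this is established, the divisibility by $p^l$ is just the identity $o(p\cdot g) = p^{m-1}$ for $g$ of order $p^m > p^l$, and no deeper input is required.
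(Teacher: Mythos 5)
Your proposal is correct and follows essentially the same route as the paper: expand $d_G(\chi,D)=\sum_j d_j A_{G,G_1}(\Omega_\chi,\Omega_{g_j})$, observe that the entries indexed by orbits of elements of order exceeding $p^l$ carry the factor $o(p\cdot g_j)\geq p^l$ and hence vanish mod $p^l$, and identify the surviving sum over orbits inside $G_l$ with the difference coefficient of $D\cap G_l$. Your added checks (that $D\cap G_l$ is a QRS in $G_l$ and that orbits and the $A$-matrix restrict compatibly) only make explicit what the paper leaves implicit.
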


\begin{proof}
Without loss of generality, using the notation of Definition (\ref{AGG1Def}), we can assume that $\chi = \chi_i$ for some $i$ because $d_G(\chi,D) = \pm d_G(\chi_i,D)$ for some $\chi_i$.
Consider,
\begin{eqnarray}
\nonumber
  d_G(\chi_i,D) &=& \sum_{j=1}^r A_{G,G_1}(\Omega_{\chi_i}, \Omega_{g_j}) d_j \\
\nonumber
	&=& \sum \{ o(p\cdot g_j) \QRSym{n} d_j \mid \chi_i(g_j) = \eta_p^n, 1 \leq n \leq p -1 \} \\
\nonumber 
 &=& \sum \{ o(p\cdot g_j) \QRSym{n} d_j \mid \chi_i(g_j) = \eta_p^n, 1 \leq n \leq p -1, o(g_j) \leq p^l \} \text{ mod } p^l \\
\nonumber
 &=& d_{G_l}(\chi_i\mid_{G_l}, G_l) \text{ mod } p^l
\end{eqnarray}
\end{proof}

We close this section with a formula relating the Difference Coefficients to the Difference Intersection Numbers.


\begin{prop}
\label{prop8}
Let $G$ be an abelian $p$-group, and $L \subset G$ a subgroup with $H = \frac{G}{L}$. Let
$\pi : G \rightarrow H$ be the canonical projection.
Choose the restriction of $\theta$ to $H$ as the commutative pairing $\theta_H = \theta\mid_H : H \rightarrow \overline{H}$. 
Choose a set of $H_1$ orbit representatives $\{h_1,\ldots, h_s\}$  of $H \backslash \{ 0 \}$, and choose
 $\{ \chi_1 = \theta_H(h_1), \ldots, \chi_s = \theta_H(h_s) \}$ as the corresponding set of $H_1$ orbit
representatives of $\overline{H} \backslash \{ \chi_0\}$.
Let $g_i$ be any lift of $h_i$ under $\pi$, and let $\chi_i' = \chi_i \circ \pi$ be the extension of $\chi_i$ to $G$. Define
$A_{H,H_1}$ using the choices for $\chi_i$, $h_i$, and $\theta_H$. Then,

\begin{enumerate}
 \item The following holds, 
\begin{eqnarray}
  \nonumber
 \pi(D - D^{(n_0)}) &=& \sum_{i=1}^s \nu_{G,L}(g_i,D) (O_{h_i}   - O_{h_i}^{(n_0)})
\end{eqnarray}
 \item The following holds
 \begin{eqnarray}
  \nonumber
  A_{H,H_1} \LRVec{ \begin{array}{c} \nu_{G,L}(g_1,D)\\ \vdots \\ \nu_{G,L}(g_s,D) \end{array} } &=&
 \LRVec{\begin{array}{c} d_G(\chi_1',D) \\ \vdots \\ d_G(\chi_s',D) \end{array} }
 \end{eqnarray}
\end{enumerate}
\end{prop}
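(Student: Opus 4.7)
The plan is to treat the two parts as a pipeline: part 1 unpacks the push-forward $\pi(D - D^{(n_0)})$ into a signed sum over $H_1$-orbits of $H$, and part 2 is then obtained by evaluating the characters $\chi_i'$ on both sides of that identity and matching with Lemma \ref{lem3} together with the Difference Coefficient formula of Proposition \ref{prop5}.

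For part 1, I would first write $\pi(D-D^{(n_0)}) = \sum_{h\in H} \bigl(|D\cap \pi^{-1}(h)|-|D^{(n_0)}\cap \pi^{-1}(h)|\bigr)[h]$ and observe that $\pi^{-1}(h) = gL$ for any lift $g$ of $h$, so the coefficient of $[h]$ is exactly $\nu_{G,L}(g,D)$. The core claim is that this coefficient is constant on each $G_2$-orbit and changes sign under multiplication by $n_0$. Both facts rest on Proposition \ref{PropCharValue}: $D$ is $G_2$-invariant, hence so is $D^{(n_0)}$, and because $L$ is a subgroup of $G$ it is closed under the $G_1$-action, so the map $x\mapsto n\cdot x$ is a bijection $gL\to (ng)L$ for every $n\in G_1$. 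For $n\in G_2$ this immediately gives the $G_2$-invariance of $\nu_{G,L}(\cdot,D)$; for $n=n_0^{-1}$ it combines with $D^{(n_0^{-1})}=D^{(n_0)}$ (which follows from $n_0^{-2}\in G_2$ and the $G_2$-invariance of $D^{(n_0)}$) to force $\nu_{G,L}(n_0 g,D) = -\nu_{G,L}(g,D)$. The $h=0$ coset is handled by the same bijection argument, giving $\nu_{G,L}(0,D)=0$. Grouping the remaining coefficients along the decomposition $\Omega_{h_i}=O_{h_i}\cup O_{h_i}^{(n_0)}$ then produces the asserted formula.

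Part 2 is almost formal once part 1 is in hand. Since $\chi_i' = \chi_i\circ \pi$ is a group-algebra homomorphism factoring through $\pi$, we have $\chi_i'(D-D^{(n_0)}) = \chi_i(\pi(D-D^{(n_0)}))$. Substituting the formula from part 1 and applying Lemma \ref{lem3} inside $H$ (read with $\theta_H$, $H_1$, and $H_2$) converts each $\chi_i(O_{h_j}-O_{h_j}^{(n_0)})$ into $A_{H,H_1}(\Omega_{\chi_i},\Omega_{h_j})\sqrt{\QRSym{-1}p}$. Independently, Proposition \ref{prop5} applied to the nonprincipal character $\chi_i'$ of $G$ yields $\chi_i'(D-D^{(n_0)}) = d_G(\chi_i',D)\sqrt{\QRSym{-1}p}$. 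Cancelling the common Gauss-sum factor and collecting rows $i=1,\ldots,s$ then gives the matrix identity. The main obstacle is the sign bookkeeping in part 1: one must show that $\nu_{G,L}(n_0 g,D)$ is the negative of $\nu_{G,L}(g,D)$, not merely related to it. Although $D$ is $G_2$-invariant, scaling by the non-residue $n_0$ interchanges $D$ with $D^{(n_0)}$ modulo $G_2$, and it is the skew condition $D(x)+D(x^{n_0})=G(x)-[1]$, rephrased as $D^{(n_0^{-1})}=D^{(n_0)}$, that forces the sign to flip; every other step is routine unwinding of definitions.
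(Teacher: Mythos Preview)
Your proof is correct and follows the same route as the paper: compute $\pi(D-D^{(n_0)})$ coset by coset, use the identity $\nu_{G,L}(n\cdot g,D)=\QRSym{n}\,\nu_{G,L}(g,D)$ to collapse into $H_1$-orbits, then apply $\chi_i$ and invoke Lemma~\ref{lem3} and Proposition~\ref{prop5}. The only cosmetic remark is that in this section $D$ is a QRS, so the $G_2$-invariance you attribute to Proposition~\ref{PropCharValue} is actually part of the definition; otherwise your argument matches the paper's, with the added benefit that you spell out the sign-flip $\nu_{G,L}(n_0 g,D)=-\nu_{G,L}(g,D)$ that the paper merely asserts.
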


\begin{proof}
Let $G = l_1 L \cup \cdots \cup l_t L$ and $m_1 = \pi(l_1), \cdots, m_t = \pi(l_t)$, where $t = |H|$. Clearly, 
\begin{eqnarray}
\nonumber
 D &=& \sum_{i=1}^t D \cap l_i L \\
\nonumber
 D^{(n_0)} &=& \sum_{i=1}^t  D^{(n_0)} \cap l_i L
\end{eqnarray}
Thus,
\begin{eqnarray}
 \nonumber
  \pi( D - D^{(n_0)}) &=& \sum_{i=1}^t \nu_{G,L}(l_i,D) [m_i]
\end{eqnarray}
Note that $\nu_{G,L}( n \cdot g, D) = \QRSym{n} \nu_{G,L}(g, D)$. Thus,
\begin{eqnarray}
 \nonumber
  \pi( D - D^{(n_0)}) &=& \sum_{j=1}^s \nu_{G,L}(g_j,D) (O_{h_j} - O_{h_j}^{(n_0)})
\end{eqnarray} 
Hence, part 1 follows.

Apply $\chi_i$ to part 1. Clearly,

\begin{eqnarray}
\nonumber
 \chi_i \circ \pi ( D - D^{(n_0)}) &=& \sum_{j=1}^s \nu_{G,L}(g_j,D) \chi_i( O_{h_j} - O_{h_j}^{(n_0)})
\end{eqnarray}

By Proposition (\ref{prop5}) and Lemma (\ref{lem3}), we have,
\begin{eqnarray}
\nonumber 
 \chi_i \circ \pi (D - D^{(n_0)}) &=&  d_G(\chi_i',D) \sqrt{\QRSym{-1}p}\\
\nonumber
  \sum_{j=1}^s \nu_{G,L}(g_j,D) \chi_i( O_{h_j} - O_{h_j}^{(n_0)}) &=& \sum_{j=1}^s \nu_{G,L}(g_j,D) A_{H,H_1}(\Omega_{\chi_i}, \Omega_{h_j}) \sqrt{\QRSym{-1}p}
\end{eqnarray}
Clearly, the part 2 follows from the previous two equations.
\end{proof}

\section{Exponent Bounds}

We show the exponent bounds known for SHDSs by making use of the following Lemma.

\begin{lem}
\label{lem_exp1}
The following hold,
\begin{enumerate}
 \item Let $G= (\Z/ p^s\Z)$ and $D$ a QRS in $G$. Then, $\nu_p(D) = 0$.
 \item Let $G= (\Z/ p^s\Z) \times (\Z/ p^s\Z)$ and $D$ a QRS in $G$. Then, $\nu_p(D) = 0$.
\end{enumerate}
\end{lem}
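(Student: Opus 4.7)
The plan is to prove part (1) by direct inspection of the explicit form of $A_{\Z/p^s\Z,G_1}$ given in Proposition \ref{HeckeG2P1}, and to handle part (2) in two stages: first reduce from general $s$ to $s=1$ via the local-to-global congruence of Proposition \ref{prop7}, and then dispatch the case $G=(\Z/p\Z)^{2}$ by an orthogonality argument forced by Corollary \ref{cor1}.

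For part (1), Proposition \ref{HeckeG2P1} exhibits $A_{\Z/p^s\Z,G_1}$ as the anti-diagonal matrix whose bottom row is $(1,0,0,\ldots,0)$. For any $\pm 1$ vector $d$, the last coordinate of $A_{G,G_1}d$ is therefore $\pm 1$, a unit mod $p$, and so $\nu_p(D)=0$.

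For part (2), set $H:=\{g\in G\mid pg=0\}\cong(\Z/p\Z)^{2}$ and $D':=D\cap H$. Then $D'$ is a QRS in $H$: the skew condition restricts cleanly to $H$, and the invariance of $D$ under $(\Z/p^s\Z)^{*2}$ descends to invariance under $(\Z/p\Z)^{*2}$ because the reduction map is surjective. Any nonprincipal character of $H$ lifts to a nonprincipal character $\chi'$ of $G$, and Proposition \ref{prop7} with $l=1$ gives $d_G(\chi',D)\equiv d_H(\chi'|_H,D')\pmod{p}$. Therefore $\nu_p(D')=0$ implies $\nu_p(D)=0$, and it suffices to prove the lemma for $G=(\Z/p\Z)^{2}$.

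The main obstacle is this $s=1$ case. Write $A:=A_{G,G_1}$: it is a symmetric $(p+1)\times(p+1)$ matrix with entries in $\{0,\pm 1\}$ in which each row has exactly one zero, at an index $\iota(j)$ indexing the unique $G_1$-orbit contained in $\ker\chi_j$, and satisfies $A^{2}=pI$ by Corollary \ref{cor1}. Suppose for contradiction that $p\mid(Ad)_j$ for every $j$. Since $(Ad)_j$ is a sum of $p$ values $\pm 1$, it is odd with $|(Ad)_j|\leq p$, so divisibility by $p$ forces $(Ad)_j=e_j p$ with $e_j\in\{\pm 1\}$, which in turn forces $d_i=e_j A(j,i)$ for every $i\neq\iota(j)$. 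For $j\neq j'$ the kernels $\ker\chi_j$ and $\ker\chi_{j'}$ are distinct, so $\iota(j)\neq\iota(j')$, and
\[
0=(A^{2})_{jj'}=\sum_{i}A(j,i)A(j',i)=(p-1)\,e_je_{j'},
\]
because $A(j,i)A(j',i)=e_je_{j'}d_i^{2}=e_je_{j'}$ on the $p-1$ indices outside $\{\iota(j),\iota(j')\}$ and vanishes at those two indices. This contradicts $p\geq 3$, and forces $\nu_p(D')=0$, completing the proof.
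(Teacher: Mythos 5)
Your proof is correct, and in outline it matches the paper's: for part 1 the paper likewise reads off a $\pm 1$ difference coefficient from the explicit matrix of Proposition \ref{HeckeG2P1}, and for part 2 it likewise passes to $D' = D\cap H$ with $H=\{g\in G\mid p\cdot g=0\}\cong(\Z/p\Z)^2$ via the congruence of Proposition \ref{prop7}. The genuine difference is how the base case is killed. The paper simply cites Corollary \ref{qrs_cor2} with $\beta=2$, $\alpha=1$ (i.e.\ Proposition \ref{prop6} part 1, the counting argument showing that $p^k$ dividing all difference coefficients forces $p^{2k+1}$ to divide $|G|$), whereas you re-prove that special case directly: you use the structure of $A_{(\Z/p\Z)^2,G_1}$ (each row has exactly one zero, the remaining $p$ entries are $\pm1$) together with $A^2=pI$ from Corollary \ref{cor1} to show that $p\mid (Ad)_j$ for all $j$ would force the off-diagonal entry $(A^2)_{jj'}$ to equal $(p-1)e_je_{j'}\neq 0$. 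I checked the details and they hold: $A$ is symmetric because the pairing $\theta$ is commutative and $o(p\cdot g)=1$ here, the zero positions $\iota(j)$ are pairwise distinct because distinct orbit representatives $\chi_j$ have distinct kernels in $(\Z/p\Z)^2$, and parity plus the bound $|(Ad)_j|\le p$ correctly pins $(Ad)_j=\pm p$ and hence $d_i=e_jA(j,i)$. What your route buys is a self-contained, transparent explanation of why the rank-two elementary abelian case is rigid, without invoking the machinery behind Proposition \ref{prop6}; what it gives up is generality (Corollary \ref{qrs_cor2} handles every abelian $p$-group of square order at once), and it quietly uses a few facts worth one line each: that $D\cap H$ is again a QRS (the skew condition restricts and quadratic residues mod $p^s$ reduce onto quadratic residues mod $p$), that nonprincipal characters of $H$ extend to nonprincipal characters of $G$, and the symmetry of $A$.
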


\begin{proof}
We show part 1. Let $d$ be the $\pm 1$s representation of $D$. Clearly, by Proposition(\ref{HeckeG2P1}), there is a difference coefficient of $D$ that is $\pm 1$. Hence, $\nu_p(D) = 0$.

Now we show Part 2. Assume otherwise. That is, there is a QRS $D$ such that $\nu_p(D) \geq 1$. Let $D' = D \cap H$ where $H = \{ g \in G \mid p \cdot g = 0 \}$. By Proposition (\ref{prop7}), $\nu_p(D') \geq 1$. 
Note that $H = (\Z/ p\Z) \times (\Z/ p\Z)$, hence by Corollary (\ref{qrs_cor2}), $\nu_p(D') = 0$. Clearly a contradiction. 
\end{proof}

Now, we show Johnsen's exponent bound.

\begin{prop}
Let $G$ admit a GSHDS $D$. Assume that $|G| = p^{2\alpha +1 }$ and that $exp(G) = p^s$. Then, $s \leq \alpha + 1$.
\end{prop}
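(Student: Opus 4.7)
The plan is to apply Proposition~\ref{prop8} to a cyclic quotient of $G$ of order $p^s$ and then extract an integrality constraint by reading off a single row of the matrix $A_{H,H_1}$ computed in Proposition~\ref{HeckeG2P1}.

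First, since $\exp(G)=p^s$, I would pick a character $\chi$ of $G$ of exact order $p^s$, set $L=\ker\chi$, and note that $H=G/L$ is cyclic of order $p^s$ (it embeds into $\C^\ast$ via $\chi$). Write $\pi:G\to H$ for the projection. Next, apply Proposition~\ref{prop8} using this $L$, choosing the $H_1$-orbit representatives of $H\setminus\{0\}$ to be $h_1=p^{s-1},\,h_2=p^{s-2},\,\ldots,\,h_s=1$ exactly as in Proposition~\ref{HeckeG2P1}. That proposition gives the explicit form of $A_{H,H_1}$ as an anti-diagonal matrix; in particular, its first row (indexed by the orbit of the order-$p$ character $\chi_{p^{s-1}}$) has a single nonzero entry equal to $p^{s-1}$, located in the last column (indexed by the orbit of $1\in H$).

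Combining these two facts, part~2 of Proposition~\ref{prop8} yields the scalar identity
\[
p^{s-1}\,\nu_{G,L}(g_s,D)\;=\;d_G\bigl(\chi_{p^{s-1}}\circ\pi,\,D\bigr),
\]
where $g_s\in G$ is any lift of $1\in H$. Because $D$ is a GSHDS and $\chi_{p^{s-1}}\circ\pi$ is a nonprincipal character of $G$ (its image contains $\eta_p$), Proposition~\ref{PropCharValue} forces the right-hand side to equal $\pm p^{\alpha}$. Thus
\[
\nu_{G,L}(g_s,D)\;=\;\pm\,p^{\,\alpha-s+1},
\]
and since the left-hand side is an integer by definition of a difference intersection number, we must have $\alpha-s+1\geq 0$, i.e., $s\leq \alpha+1$.

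The only real obstacle is the bookkeeping: one has to align the orbit orderings in Proposition~\ref{prop8} with the row/column indexing of Proposition~\ref{HeckeG2P1} so that the extracted equation uses the character of \emph{smallest} order (order $p$), which is the one paired against the element of largest order and therefore picks up the factor $p^{s-1}$. Once this identification is made, the argument is essentially a one-line integrality observation.
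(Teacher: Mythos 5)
Your argument is correct, and it lives in the same framework as the paper's proof (Proposition \ref{prop8} applied to a cyclic quotient $H$ of order $p^s$, together with the explicit shape of $A_{H,H_1}$ for cyclic groups from Proposition \ref{HeckeG2P1}), but the final step is genuinely different and more direct. The paper does not read off a single row: it takes the full vector identity $A_{H,H_1}\nu = p^{\alpha}d$, multiplies by $A_{H,H_1}$ again and uses Corollary \ref{cor1} ($A_{H,H_1}^2 = p^{s-1}I$) to get $p^{s-1-\alpha}\nu = A_{H,H_1}d$, and then invokes Lemma \ref{lem_exp1} (a QRS in $\Z/p^s\Z$ has some difference coefficient equal to $\pm 1$, i.e.\ $\nu_p = 0$) to force $s-1-\alpha \le 0$. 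You instead extract the single scalar equation $p^{s-1}\,\nu_{G,L}(g_s,D) = d_G(\chi_{p^{s-1}}\circ\pi,D) = \pm p^{\alpha}$ from the row pairing the order-$p$ character against the order-$p^s$ element, and conclude by integrality of the difference intersection number; this bypasses Corollary \ref{cor1} and Lemma \ref{lem_exp1} entirely (though note that Lemma \ref{lem_exp1} itself ultimately rests on the same anti-diagonal matrix, via its row with entry $1$, so the two proofs are cousins). What the paper's longer template buys is uniformity: the identical argument with $H = (\Z/p^s\Z)^2$ and part 2 of Lemma \ref{lem_exp1} gives the Chen--Sehgal--Xiang bound, whereas your one-row shortcut exploits the explicit anti-diagonal form special to cyclic $H$ and does not transfer to the rank-two case without further information about $A_{H,H_1}$. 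Two harmless bookkeeping remarks: your $H = G/\ker\chi$ is the same cyclic quotient the paper obtains from a direct-factor decomposition $G = (\Z/p^s\Z)\times L$, and although the entry in your chosen row depends on the pairing only up to a sign $\QRSym{c}$ for some unit $c$, that sign is irrelevant to the integrality conclusion.
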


\begin{proof}
Clearly, $G = (\Z/ p^s\Z) \times L$, for some subgroup $L$. Hence, there is a projection $\pi : G \rightarrow H = (\Z/ p^s\Z)$. We proceed to apply Proposition(\ref{prop8}). Hence, there is an integral vector $\nu$ such that,
\begin{eqnarray}
\nonumber
 A_{H,H_1} \nu &=& df_H(D)
\end{eqnarray}
where $df_H(D)$ are the difference coefficients of $D$ evaluated at the characters of $H$ extended to $G$. Because $D$ is a GSHDS, we must have $df_H(D) = p^{\alpha} d$ where $d$ is a vector of $\pm 1$s. More percisely, $d$ is the $\pm 1$s representation of $\overline{D} \cap \overline{H}$ where $\overline{D}$ is the dual of $D$. Hence,
\begin{eqnarray}
 \nonumber
 A_{H,H_1} \nu &=& p^{\alpha} d
\end{eqnarray}
By applying Corollary (\ref{cor1}), we derive,
\begin{eqnarray}
\nonumber
 p^{s-1 - \alpha } \nu &=& A_{H,H_1} d
\end{eqnarray}
Where $d$ corresponds to a QRS in $\overline{H}$. By Lemma (\ref{lem_exp1}), we must have $s-1 - \alpha \leq 0$. The result follows. 
\end{proof}

Using a similar technique, we show the Chen-Sehgal-Xiang exponent bound.

\begin{prop}
Let $G$ admit a GSHDS $D$. Assume that $|G| = p^{2\alpha +1 }$ and that $exp(G) = p^s$. Then, $2s \leq \alpha + 1$.
\end{prop}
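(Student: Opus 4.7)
The plan is to mimic the Johnsen argument verbatim, but to project onto a rank-two quotient of exponent $p^s$ rather than a cyclic one, and to absorb the extra factor of $p^{s}$ this gives into the character-value divisibility to get a bound with $2s$ in place of $s$. The key input that makes this work is Proposition \ref{Camion2} (Camion--Mann), which says $G=(\Z/p^s\Z)^2\times L'$ for some subgroup $L'$. Hence there is a surjection $\pi:G\to H$ with $H=(\Z/p^s\Z)\times(\Z/p^s\Z)$, and $H$ is exactly the group for which Lemma \ref{lem_exp1}, part 2, asserts that every QRS has $p$-divisibility $\nu_p=0$.

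Applying Proposition \ref{prop8} to this $\pi$ yields an integral vector $\nu$ of difference intersection numbers with
\begin{eqnarray}
\nonumber
A_{H,H_1}\,\nu &=& df_H(D).
\end{eqnarray}
Since $D$ is a GSHDS, Proposition \ref{PropCharValue} gives $\chi(D)=\frac{-1+\epsilon_\chi\sqrt{\QRSym{-1}v}}{2}$ for every non-principal $\chi$, so $d_G(\chi',D)=\pm p^{\alpha}$ for every non-principal character $\chi'$ pulled back from $H$. Thus $df_H(D)=p^{\alpha}d$, where $d$ is the $\pm 1$ representation of the QRS in $\overline{H}$ cut out by the characters of $\overline{D}$ that factor through $\pi$.

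Now multiply both sides of $A_{H,H_1}\nu=p^{\alpha}d$ by $A_{H,H_1}$ and use Corollary \ref{cor1}, which gives $A_{H,H_1}^{2}=\frac{|H|}{p}I=p^{2s-1}I$. This produces
\begin{eqnarray}
\nonumber
p^{2s-1}\nu &=& p^{\alpha}\,A_{H,H_1}\,d,
\end{eqnarray}
so $A_{H,H_1}d=p^{2s-1-\alpha}\nu$. If $2s-1-\alpha>0$, then every entry of $A_{H,H_1}d$ is divisible by $p$, forcing $\nu_p\geq 1$ for the QRS represented by $d$, which contradicts Lemma \ref{lem_exp1}, part 2. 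Therefore $2s-1-\alpha\leq 0$, i.e. $2s\leq\alpha+1$.

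The main technical obstacle is not in the short arithmetic above but in the $\nu_p=0$ input for QRSs in $(\Z/p^s\Z)^2$; without this strengthening of the cyclic case, doubling the rank of the quotient gains nothing. That input is precisely what Lemma \ref{lem_exp1}, part 2 supplies (via Proposition \ref{prop7} to reduce to the socle $(\Z/p\Z)^2$, then Corollary \ref{qrs_cor2}, part 1, where $\beta=2$ forces $\nu_p\leq 0$), and it is what Camion--Mann makes available for any $G$ admitting a GSHDS.
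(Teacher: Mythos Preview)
Your proof is correct and follows essentially the same approach as the paper: invoke Proposition~\ref{Camion2} to project onto $H=(\Z/p^s\Z)^2$, apply Proposition~\ref{prop8} to write $A_{H,H_1}\nu=p^{\alpha}d$, hit with $A_{H,H_1}$ and use $A_{H,H_1}^2=p^{2s-1}I$ from Corollary~\ref{cor1}, then conclude via Lemma~\ref{lem_exp1}, part~2. Your closing paragraph explaining how Lemma~\ref{lem_exp1}(2) is proved (reduce to the socle via Proposition~\ref{prop7}, then apply Corollary~\ref{qrs_cor2} with $\beta=2$) is a helpful gloss that the paper leaves implicit.
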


\begin{proof}
Clearly by Proposition(\ref{Camion2}), $G = (\Z/ p^s\Z) \times (\Z/ p^s\Z) \times L$, for some subgroup $L$. Hence, there is a projection $\pi : G \rightarrow H = (\Z/ p^s\Z) \times (\Z/ p^s\Z)$. We proceed to apply Proposition(\ref{prop8}). Hence, there is an integral vector $\nu$ such that,
\begin{eqnarray}
\nonumber
 A_{H,H_1} \nu &=& df_H(D)
\end{eqnarray}
where $df_H(D)$ are the difference coefficients of $D$ evaluated at the characters of $H$ extended to $G$. Because $D$ is a GSHDS, we must have $df_H(D) = p^{\alpha} d$ where $d$ is a vector of $\pm 1$s. More percisely, $d$ is the $\pm 1$s representation of $\overline{D} \cap \overline{H}$ where $\overline{D}$ is the dual of $D$. Hence,
\begin{eqnarray}
 \nonumber
 A_{H,H_1} \nu &=& p^{\alpha} d
\end{eqnarray}
By applying Corollary (\ref{cor1}), we derive,
\begin{eqnarray}
\nonumber
 p^{2s-1 - \alpha } \nu &=& A_{H,H_1} d
\end{eqnarray}
Where $d$ corresponds to a QRS in $\overline{H}$. By Lemma (\ref{lem_exp1}), we must have $2s-1 - \alpha \leq 0$. The result follows. 
\end{proof}

\section{Results From The Galois Rings $GR(p^2,\beta)$}

The derivation of necessary existence conditions for the family of groups $G = (\Z/ p^2\Z)^{2\alpha + 1} \times (\Z/ p\Z)$ will
make use of the Galois Rings $GR(p^2,2\alpha +1)$ viewed as the groups $H=(\Z/ p^2\Z)^{2\alpha +1}$ under the additive operation of the Galois Ring. 
The Galois Rings $GR(p^2,2\alpha +1)$ will also be used to construct the
element $L_0(x)$ of Proposition(\ref{gshds3_prop1}), and to derive a canonical form for the $A_{H,H_1}$ incidence structures.
In this section, we will derive these results.

We will assume the following result that can be shown using standard local algebraic number theory as it is done
in Neukirch's book in \cite{Neu} in the chapter of $p$-adic extensions.

\begin{prop}
\label{SpcCaseP1}
 Let $K=\Q_p(\eta)$ be an umramified extension of $\Q_p$ of
degree $\alpha$, where $\Q_p$ are the $p$-adic numbers. Let $O_K$ be the ring of integers of $K$.
Define $GR(p^k, \alpha)= \frac{O_K}{p^k O_K}$.
Let $q=p^\alpha$, then:
 \begin{enumerate}
 \item The ring $GR(p^k,\alpha) =(\Z/ p^k\Z)(\eta)$ is the Galois extension of the ring
 $(\Z/ p^k\Z)$ of degree $\alpha$ and exponent $p^k$. 
  \item There is a set of ``Teichmuller Units'', $\tau = \{1,\eta,
\ldots, \eta^{q-2} \} \subset (O_K)^*$, of $(q-1)$th primitive roots of unity
in $O_K$
that are distinct mod $p O_K$.
  \item Every element $\gamma \in GR(p^k,\alpha)$ decomposes into:
  \begin{eqnarray}
   \nonumber
    \gamma &=& r_0(\gamma) + r_1(\gamma) p + \cdots + r_{k-1}(\gamma) p^{k-1},
  \end{eqnarray} 
  where the $r_i(\gamma) \in \tau \cup \{ 0 \}$ are unique.
  \item The $Gal(K\mid \Q_p) = \LRGen{Fr}$; where $Fr$, the ``Frobenious'' map, is defined by $Fr(\eta) = \eta^p$.
  \item The $Gal(K\mid \Q_p)$ induces a group of automorphisms of $GR(p^k,\alpha)$
that leaves $(\Z/ p^k\Z)$ invariant and is
  defined by $Fr(\gamma) = r_0(\gamma)^p + r_1(\gamma)^p p + \cdots +
 	r_{k-1}(\gamma)^p p^{k-1}$.
  \item The Trace map $Tr : K \rightarrow \Q_p$ defined by:
  \begin{eqnarray}
   \nonumber
   Tr(x) &=& \sum_{j=0}^{n-1} Fr^i(x),
  \end{eqnarray} 
  induces a Trace map $Tr : GR(p^k,\alpha) \rightarrow  (\Z/ p^k\Z)$.
 \end{enumerate} 
\end{prop}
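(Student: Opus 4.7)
The plan is to derive the six assertions by standard local algebraic number theory, working at the level of $O_K$ and then descending via the surjection $O_K \twoheadrightarrow O_K/p^k O_K = GR(p^k,\alpha)$. First one would use the hypothesis that $K/\Q_p$ is unramified of degree $\alpha$ to conclude that $p$ remains a uniformizer of $O_K$ and the residue field is $\F_q$ with $q = p^\alpha$; consequently $O_K$ is free of rank $\alpha$ over $\Z_p$, and reducing mod $p^k$ yields part 1. For part 2 apply Hensel's lemma to $x^{q-1} - 1$: over $\F_q$ this polynomial has $q-1$ distinct simple roots, so each lifts uniquely to a $(q-1)$th root of unity in $O_K^*$. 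Choosing $\eta$ as a generator of the resulting cyclic group produces the Teichmüller system $\tau = \{1,\eta,\ldots,\eta^{q-2}\}$, whose reduction bijects onto $\F_q^*$.

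Part 3 then follows by the greedy $p$-adic expansion: given $\gamma \in GR(p^k,\alpha)$, there is a unique $r_0(\gamma) \in \tau \cup \{0\}$ congruent to $\gamma$ mod $p$, and recursion on $(\gamma - r_0(\gamma))/p$ produces the remaining digits; uniqueness holds because $\tau \cup \{0\}$ meets each residue class mod $p$ exactly once. For parts 4 and 5, invoke the classical fact that $Gal(K \mid \Q_p)$ is cyclic of order $\alpha$, generated by the canonical lift $Fr$ of the residue-field Frobenius $x \mapsto x^p$. Since $Fr(\eta)$ is again a $(q-1)$th root of unity whose reduction equals $\eta^p$, Hensel uniqueness forces $Fr(\eta) = \eta^p$ exactly, so $Fr$ acts on Teichmüller digits by raising to the $p$-th power; the stated formula on $p$-adic expansions then follows by $\Z/p^k\Z$-linearity, and descends to an automorphism of $GR(p^k,\alpha)$ fixing $(\Z/p^k\Z)$ setwise.

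Finally, for part 6, the trace $Tr : K \to \Q_p$ restricts to an integral-valued map $O_K \to \Z_p$ in the unramified case, a standard consequence of the fact that the different ideal of $K/\Q_p$ is trivial; reducing mod $p^k$ yields the asserted trace $GR(p^k,\alpha) \to \Z/p^k\Z$. The main obstacle in writing out a self-contained proof is not any single claim but rather coordinating them so that the abstract ring $GR(p^k,\alpha)$, the Teichmüller set, the Frobenius, and the trace all refer to compatible structures; carrying out every step at the $O_K$ level before reducing mod $p^k$ sidesteps this cleanly, which is why the author simply invokes Neukirch rather than reproving it.
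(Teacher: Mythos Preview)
Your proposal is correct and in fact goes further than the paper, which does not prove this proposition at all but simply states it as a known result ``that can be shown using standard local algebraic number theory as it is done in Neukirch's book.'' Your sketch via Hensel's lemma, the Teichm\"uller lift, and the unramified trace is exactly the standard argument being invoked, and you correctly anticipated that the author would cite Neukirch rather than reprove it.
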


By using Galois Rings, we give
a list of $H_1$ orbit representatives for $H= (\Z/p^2 \Z)^{\alpha}$
that will prove useful.

\begin{prop}
\label{hzp2a}
Let $H=(\Z/p^2 \Z)^{\alpha}$ and $L= \{ g \in H \mid o(g) = p \} = p \cdot H$.
Then,
\begin{enumerate}
 \item The group $H$ can be viewed as  Galois Ring $GR(p^2,\alpha)$ by using
any isomorphism of additive groups $\kappa :H \rightarrow GR(p^2,\alpha)$.
That is, we can define the corresponding multiplication on $H$ as:
\begin{eqnarray}
 \nonumber
 g*g' &=& \kappa^{-1}(\kappa(g)*\kappa(g') )
\end{eqnarray}
 \item Let $r'$ be the number of orbits of the action of $L_1 = (\Z/p \Z)^*$ 
 on $L\backslash \{ 0 \}$, then  $r' = p^{\alpha -1} + p^{\alpha -2} 
  + \cdots + p + 1$.
 \item  Let $q = p^{\alpha}$. There is a set  
	$\mu_{q-1} = \{ k_1,\ldots , k_{q-1} \}$
	of elements of $H$ of order $p^2$, called the ``Teichmuller Units'', 
	such that:
\begin{eqnarray}
\nonumber
    p \cdot \mu_{q-1} &=& \{ p\cdot k_1,\ldots, p \cdot k_{p^{\alpha} -1} \}\\
\nonumber
		&=& L\backslash \{ 0 \}.
\end{eqnarray}
 \item  There are a sets
	$\{ l_1, \ldots, l_{r'} \} \subset \mu_{q-1}$ 
	and 
	$\{ l_{1}', \ldots ,
	 l_{p^{\alpha -1 }}' \} \subset \mu_{q-1} \cup \{ 0 \}$
	such that,
	\begin{enumerate}
	\item  The elements $\{ p \cdot l_1,\ldots , p \cdot l_{r'}\}$ are orbit 
	representatives of 
	the action of $L_1$ on $L \backslash \{0 \}$.
        \item The elements $\{ p \cdot l_1',\ldots, p \cdot l_{p^{\alpha-1}}' \}$
	are orbit representatives of the action of $((\Z/ p \Z), +)$ on
	$(L,+)$.
	\item The elements $h_{i,j} = l_i + p \cdot l_{i,j}' = l_i*(1+ pl_j')$ 
		, where $i = 1, \ldots, r'$ and 
		$j = 1, \ldots, p^{\alpha -1}$, forms a set of orbit
		representatives of the action of $H_1$ on the elements of order
		$p^2$ in $H$.
	\end{enumerate}
\end{enumerate} 
\end{prop}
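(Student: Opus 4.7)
The plan is to build up the structure in stages, leaning on Proposition \ref{SpcCaseP1}. For part 1, I fix any additive isomorphism $\kappa : H \to GR(p^2,\alpha)$ and transport multiplication to $H$ via the given formula; associativity, distributivity, and the existence of a unit all descend from $GR(p^2,\alpha)$ along $\kappa$. For part 2, the group $L = p \cdot H$ is an $\alpha$-dimensional $\F_p$-vector space, so $|L \setminus \{0\}| = p^{\alpha}-1$, and the action of $L_1 = \F_p^*$ is free on $L \setminus \{0\}$; hence the orbit count is $(p^{\alpha}-1)/(p-1) = p^{\alpha-1} + p^{\alpha - 2} + \cdots + 1$. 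For part 3, take $\mu_{q-1}$ to be the image in $H$ of the Teichmuller units $\tau \subset O_K$ of Proposition \ref{SpcCaseP1}; they are $q - 1$ units of $GR(p^2,\alpha)$ (equivalently, elements of additive order $p^2$), and they are distinct mod $p$, so $p \cdot \mu_{q-1}$ is a set of $q - 1 = |L \setminus \{0\}|$ distinct nonzero elements of $L$, hence equal to $L \setminus \{0\}$.

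For part 4, I reduce everything to a counting and distinctness argument inside the Galois ring. For (a), because $p \cdot \mu_{q-1} = L \setminus \{0\}$ partitions into $r'$ orbits under $L_1$ of size $p-1$, I pick a Teichmuller lift $l_i \in \mu_{q-1}$ of one representative per orbit. For (b), I choose $l_1', \ldots, l_{p^{\alpha - 1}}' \in \mu_{q-1} \cup \{0\}$ whose reductions $\overline{l_j'} \in \F_q$ form a transversal of $\F_p$ in $\F_q$; reading $(\Z/p\Z, +)$ as the subgroup $p \cdot \F_p \subset L$ acting on $L$ by translation, the set $\{p \cdot l_j'\}$ is then a transversal of the cosets in $(L,+)$, giving the required $p^{\alpha - 1}$ representatives. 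The central claim is (c): $\{h_{i,j}\} = \{l_i(1 + p l_j')\}$ is a full set of $H_1$-orbit representatives of the units of $GR(p^2,\alpha)$, which coincide with the elements of order $p^2$ in $H$. The count is immediate: the set of units has size $p^{\alpha}(p^{\alpha}-1)$, the $H_1$-action on it is free (since $nu = u$ with $u$ a unit forces $n = 1$), and $|H_1| = p(p-1)$, so the number of orbits is $p^{\alpha}(p^{\alpha}-1)/(p(p-1)) = p^{\alpha - 1}\, r'$, matching the number of candidate $h_{i,j}$.

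For distinctness, suppose $n h_{i,j} = h_{i',j'}$ with $n \in H_1$. Reducing mod $p$ in $GR(p^2,\alpha)$ gives $\overline{n}\cdot \overline{l_i} = \overline{l_{i'}}$ in $\F_q$ with $\overline{n} \in \F_p^*$; multiplying by $p$ yields $\overline{n}(p l_i) = p l_{i'}$ in $L$, so by the choice of the $p l_i$ as $L_1$-orbit representatives we get $i = i'$ and $\overline{n} = 1$, whence $n = 1 + p m$ for some $m \in \F_p$. Expanding with $p^2 = 0$,
\begin{eqnarray}
\nonumber
n h_{i,j} &=& (1+p m)\, l_i\, (1 + p l_j') \\
\nonumber
 &=& l_i\, (1 + p(m + l_j')),
\end{eqnarray}
and equality with $h_{i,j'} = l_i (1 + p l_{j'}')$ (cancelling the unit $l_i$) forces $p(m + l_j' - l_{j'}') = 0$, equivalently $m + \overline{l_j'} = \overline{l_{j'}'}$ in $\F_q$; the transversal property then forces $j = j'$ and $m = 0$. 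The main technical obstacle is keeping the ring and additive structures straight — in particular, verifying the identity $(1 + p m)(1 + p l_j') = 1 + p(m + l_j')$ inside $GR(p^2, \alpha)$ and correctly identifying the $(\Z/p\Z,+)$-action of part (b) with translation by $p \cdot \F_p \subset L$ — once this dictionary is pinned down the rest is linear algebra over $\F_p$.
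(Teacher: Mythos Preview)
Your proof is correct and follows essentially the same route as the paper: both use the Teichm\"uller parametrization $u = a_0(1+pa_1)$ of $GR(p^2,\alpha)^*$ and the identity $(1+pm)(1+pl_j') = 1+p(m+l_j')$ to decouple the $H_1$-action into the multiplicative $\F_p^*$-action on the first coordinate and the additive $\F_p$-action on the second. The only cosmetic difference is that the paper verifies exhaustiveness of the list $\{h_{i,j}\}$ directly, whereas you replace that check by the orbit count $|GR(p^2,\alpha)^*|/|H_1| = p^{\alpha-1}r'$ together with distinctness; both arguments are equivalent.
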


\begin{proof}
Part 1 is clear. We show part 2 by observing that $L_1$ has order $p-1$ and is acting on, 
$L\backslash \{ 0 \}$, a set of size $p^{\alpha} -1$. Since this action
has no fixed points, a direct use of burnside's formula for group actions gives
the number of orbits as $\frac{ p^{\alpha} -1}{p-1} = p^{\alpha -1} + 
 p^{\alpha -2} + \cdots + p + 1$. Hence, part 2 follows.

To show parts 3 and 4, we will view $H$ as a Galois Ring; the extra operation of
multiplication will help us construct the desired $H_1$ orbit representatives.

Let $K=\Q_p(\beta)$ be an unramified extension of $\Q_p$,
the $p$-adic numbers,
of degree $\alpha$. Denote by  
 $O_K$ the {\rm ring
of integers in $K$}.
Let $\tau$ be the {\rm Teichmuller Units} of Proposition \ref{SpcCaseP1}.

Define $\mu_{q-1} = \tau$ mod $p^2 O_K$. Since $GR(p^2,\alpha)$ is
a local ring with maximal ideal given by $p GR(p^2,\alpha)$, it follows
that all units have order $p^2$. Hence, every element of
$\mu_{q-1}$ has order $p^2$. To show part 3, let $k_1 \neq k_2$ both in $\mu_{q-1}$ such that
$p \cdot k_1 = p \cdot k_2$ in $L \subset H$. By construction,
there are $\eta_1 \neq \eta_2$ both in $\tau$ such that
$p \cdot \eta_1 = p \cdot \eta_2$ mod $p^2 O_K$. Thus,
$p \cdot (\eta_1 - \eta_2) = 0 $ mod $p^2 O_K$. Since $K$ is an unramified
discrete valuation ring with prime $p$, it follows
that $p$ divides $\eta_1 - \eta_2$, i.e., $\eta_1 = \eta_2$ mod $p O_K$.
This contradicts Proposition \ref{SpcCaseP1} part 2. Hence,
$p\cdot \mu_{q-1} \subset L\backslash \{ 0 \}$ has $q-1$ elements.
Since $|L\backslash \{ 0 \}| = q-1$, it must be the case
that $p \cdot \mu_{q-1} = L \backslash \{ 0 \}$.

Now, we proceed to show part 4. 
By using standard local algebraic number theory, we know
that $K^* = \LRGen{p} \times \mu_{q-1} \times U^{(1)}$; where 
$\LRGen{p} = \{ p^i \mid i \in \Z \}$ and 
$U^{(1)} = \{ x \in O_K \mid x = 1 + p z \text{ where } z \in  O_K \}$.
Thus, we can deduce that $GR(p^2,\alpha)^*$ can be parametrized by tuples in the
following form:
\begin{eqnarray}
\label{rp2a}
GR(p^2,\alpha)^* &=& 
	\{ (a_0,a_1) \mid a_0 \in \mu_{q-1}, a_1 \in \mu_{q-1} \cup \{ 0 \} \},
\end{eqnarray}
where $(a_0,a_1) = a_0(1+ p a_1) \in GR(p^2,\alpha)$. 
We note that the parametrization in Equation (\ref{rp2a}) is more of a 
multiplicative decomposition instead of an additive decomposition. 
That is, $(a_0,0) (0,a_1) = (a_0,a_1)$ but $(a_0, 0) + (0,a_1) \neq (a_0,a_1)$.

Note that $(\Z/p^2 \Z) = \Z_p /p^2 \Z_p \subset O_K/p^2 O_K = GR(p^2,\alpha)$.
A direct calculation shows that:
\begin{eqnarray}
\nonumber
(\Z/p^2 \Z)^* &=& \{ (b_0, b_1) \mid b_0 \in \mu_{p-1}, 
	b_1 \in \mu_{p-1}\cup \{ 0 \} \},
\end{eqnarray}
where $\mu_{p-1}$ is the subset of $\mu_{q-1}$ consisting of the $(p-1)$th
roots of unity.
Note that the action of $(\Z/p^2 \Z)^*$ on $H$ is given by multiplication
of $(\Z/ p^2\Z)$ on $GR(p^2,\alpha)$. Also, note that multiplication of
tuples has a nice formula since:
\begin{eqnarray}
\label{mulrp2a}
 (b_0,b_1) (a_0,a_1)&=& b_0(1+ pb_1)a_0(1+pa_1)\\
\nonumber
	&=& b_0 a_0 (1 + p(b_1 + a_1) ) \text{ mod } p^2 O_K \\
\nonumber
	&=& (b_0 a_0,b_1 + a_1),
\end{eqnarray}
where the addition in the second coordinate is taken as addition in $\F_q$,
and multiplication in the first coordinate is taken as multiplication in
$(\F_q)^*$.

Clearly, if we are considering elements of order $p^2$, then we are considering
the elements of $GR(p^2,\alpha)^*$.
Also, 
the action of $(\Z/p^2 \Z)^* = H_1$ on $GR(p^2,\alpha)^* \subset H$ ``decomposes'' via
the use of Equation (\ref{mulrp2a}) onto the action of $\F_p^*$ on $\F_q^*$ on
the first coordinate and the action of $(\F_p,+)$ on $(\F_q,+)$ on the
second coordinate.

Let $m_1,\ldots,m_{r'}$ be orbit representatives of the action of 
$\mu_{p-1}$ on $\mu_{q-1}$. Also, let $n_1,\ldots,n_{p^{\alpha-1}}$ be
orbit representatives of the action of $(\F_p,+) = \mu_{p-1} \cup \{ 0 \}$
on $(\F_q,+) = \mu_{q-1} \cup \{ 0 \}$. 

We claim that the set of
tuples $(m_i, n_j) = m_i(1+ p n_j)$ form a set of orbit representatives of
the action of $(\Z/p^2 \Z)^{*}$ on $GR(p^2,\alpha)^{*}$. This, will show
part 4.

Let $(m,n) \in GR(p^2,\alpha)^{*}$.
Clearly, $m$ belongs to the orbit of some $m_{i_0}$. That is, there is 
$b_0 \in \mu_{p-1}$ such that $m = b_0 m_{i_0}$. Also, $n$ belongs to the orbit
of some $n_{j_0}$. That is, there is $b_1 \in (\F_p, +) = \mu_{p-1} 
\cup \{ 0 \}$ such that $n = b_1 + n_{j_0}$. Note that this means
$(m,n) = (b_0,b_1) (m_{i_0},n_{j_0})$, where $(b_0,b_1)\in (\Z/p^2 \Z)^*$. 
Hence, the list $(m_i, n_j)$ is 
exahustive.

It suffices to show that each orbit is represented by at most one
$(m_i,n_j)$. Suppose that $(m_i,n_j)$ and $(m_{i'},n_{j'})$ represent
the same orbit $H_1$ orbit of $H \backslash \{ 0 \}$.
Clearly, there is $b \in H_1 = (\Z/ p^2\Z)^*$ such that:
\begin{eqnarray}
 \nonumber
 m_i(1+ p n_j) &=& b* m_{i'}( 1 + p n_{j'}).
\end{eqnarray}

Since, we can decompose $b$ into $(b_0,b_1)$, clearly,
$(m_{i'},n_{j'}) = (b_0,b_1) (m_i,n_j)=(b_0 m_i, b_1 + n_j)$.
Thus $m_{i'} = b_0 m_i$ and $n_{j'} = b_1 + n_j$. Hence, forcing $i = i'$ and
$j= j'$ by the choice of the $m_i$ and $n_j$'s.
\end{proof}

We note that the $l_{i,j}'$'s of Proposition \ref{hzp2a} may
repeat themselves; but, the $l_i$'s and $l_j'$'s do not. In our study of
$GR(p^2,2\alpha + 1)$, we will choose the $l_i$'s as quadratic residues
in $\mu_{q-1} = \F_{p^{2\alpha + 1}}^*$. The following proposition justifies this choice.

\begin{prop}
Let $q = p^{2\beta +1}$, and define $\mu_{q-1} = \F_q^*$, $\mu_{p-1} = \F_p^*$. Then, any set of representatives of
$\frac{\mu_{q-1}^2}{\mu_{p-1}^2}$ is also a set of representatives of
$\frac{\mu_{q-1}}{\mu_{p-1}}$.
\end{prop}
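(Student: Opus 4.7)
The plan is to recast the claim as showing that the natural map
\[
\mu_{q-1}^{2}/\mu_{p-1}^{2} \longrightarrow \mu_{q-1}/\mu_{p-1},
\]
induced by the inclusion $\mu_{q-1}^{2}\hookrightarrow\mu_{q-1}$, is a bijection. Once this is established, any transversal for the left-hand quotient, viewed inside $\mu_{q-1}$, is also a transversal for the right-hand quotient, which is precisely the statement to prove.

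First I would check that the two sides have the same (finite) size. Since $p$ is odd, $q-1$ is even, so $[\mu_{q-1}:\mu_{q-1}^{2}]=2$ and $[\mu_{p-1}:\mu_{p-1}^{2}]=2$. Therefore
\[
\bigl|\mu_{q-1}^{2}/\mu_{p-1}^{2}\bigr|
= \frac{(q-1)/2}{(p-1)/2} = \frac{q-1}{p-1} = \bigl|\mu_{q-1}/\mu_{p-1}\bigr|.
\]
So it suffices to prove injectivity, which amounts to the equality
\[
\mu_{q-1}^{2}\cap\mu_{p-1} \;=\; \mu_{p-1}^{2}.
\]
The inclusion $\supseteq$ is immediate, so only $\subseteq$ requires work.

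For this inclusion I would pick a generator $g$ of $\mu_{q-1}$ and write $h=g^{(q-1)/(p-1)}$, a generator of $\mu_{p-1}$. An element of $\mu_{p-1}$ has the form $h^{k}=g^{k(q-1)/(p-1)}$, and it is a square in $\mu_{q-1}$ exactly when $k(q-1)/(p-1)$ is even. The key arithmetic observation is that
\[
\frac{q-1}{p-1} \;=\; 1 + p + p^{2} + \cdots + p^{2\beta}
\]
is a sum of $2\beta+1$ odd terms, hence odd. Therefore $k(q-1)/(p-1)$ is even iff $k$ is even, i.e.\ iff $h^{k}$ is already a square in $\mu_{p-1}$. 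This gives the desired inclusion.

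Finally, injectivity of the map follows at once: if $a,b\in\mu_{q-1}^{2}$ project to the same coset of $\mu_{p-1}$, then $ab^{-1}\in\mu_{q-1}^{2}\cap\mu_{p-1}=\mu_{p-1}^{2}$, so $a$ and $b$ already represent the same coset of $\mu_{p-1}^{2}$. Combined with the cardinality count, the map is a bijection, finishing the proof. The only nontrivial step is the parity computation of $(q-1)/(p-1)$, which is where the hypothesis that the exponent of $q$ over $p$ is the odd number $2\beta+1$ is used essentially; if instead it were even, $(q-1)/(p-1)$ would be even and the argument (and the conclusion) would fail.
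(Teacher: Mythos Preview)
Your proof is correct and follows the same overall skeleton as the paper's: both first note that the two quotients have the same cardinality $(q-1)/(p-1)$, then reduce the question to the injectivity of the natural map $\mu_{q-1}^{2}/\mu_{p-1}^{2}\to\mu_{q-1}/\mu_{p-1}$, which in turn boils down to showing that a nonsquare of $\F_p^{*}$ cannot become a square in $\F_q^{*}$ when the exponent $2\beta+1$ is odd. The difference lies in how that key fact is established. The paper proves a separate claim, via splitting fields and Galois groups, that $\F_p^{*}\subset(\F_q^{*})^{2}$ if and only if $[\F_q:\F_p]$ is even. You instead pick a generator $g$ of $\F_q^{*}$, write the generator of $\F_p^{*}$ as $g^{(q-1)/(p-1)}$, and observe directly that $(q-1)/(p-1)=1+p+\cdots+p^{2\beta}$ is odd, so the parity of exponents is preserved. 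Your route is shorter and entirely elementary, avoiding any appeal to field extensions; the paper's route has the minor advantage of yielding the full if-and-only-if characterization as a by-product.
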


\begin{proof}
Clearly, the number representatives in a slice of $\frac{\mu_{q-1}^2}{\mu_{p-1}^2}$ 
is the same as the number of representatives in a slice
of $\frac{\mu_{q-1}}{\mu_{p-1}}$.
It suffices to show that: if $x,y \in \mu_{q-1}^2$ are inequivalent mod $\mu_{p-1}^2$,
then they are inequivalent mod $\mu_{p-1}$.\footnote{In the
multiplicative sense} 
Thus, suppose otherwise; i.e., $x,y$ are inequivalent mod
$\mu_{p-1}^2$ but they are equivalent mod $\mu_{p-1}$. Let
$x = y \psi$, where $\psi \in \mu_{p-1}\backslash \mu_{p-1}^2$.
Clearly, the quadratic residue symbol $\QRSym{\cdot}$ is a 
multiplicative
character of $\mu_{q-1}$. Hence,
\begin{eqnarray}
\nonumber
\begin{array}{ccccccc}
  1 &=& \QRSym{x} &=& \QRSym{y} \QRSym{\psi} &=& \QRSym{\psi}.
\end{array}
\end{eqnarray}

Therefore, $\psi$ is a quadratic residue in $\mu_{q-1}$. Thus,
$\mu_{p-1} \subset \mu_{q-1}^2$. The following claim will
give us a contradiction.

\begin{claim}
Let $p$ be a prime and $q = p^{\beta}$, then:
 $\F_p^* \subset (\F_q^*)^2$ if and only if $q = p^{2\alpha}$
is an even power of $p$
\end{claim}

\begin{proof}
($\Rightarrow$) 
Let $\eta \in (\F_p^*) \backslash (\F_p^*)^2$ and choose $\psi \in (\F_q^*)^2$ so that $\psi^2 = \eta$, then the splitting
field of $\psi$ over $\F_p$ must be contained in $\F_q$; Hence,
$\Z/ 2\Z$ must be a subgroup of $Gal(\F_q\mid \F_p)$, the Galois Group of
$\F_q$ over $\F_p$. Note that 
$Gal(\F_q,\F_p) = (\Z/ \beta\Z)$; thus, $2$ must divide $\beta$ since
$(\Z/ 2\Z) \subset (\Z/ \beta\Z)$.

($\Leftarrow$)
Let $\eta \in (\F_p^*) \backslash (\F_p^*)^2$ and choose $\psi$ so that $\psi^2 = \eta$, then the splitting
field $\F$ of $\psi$ over $\F_p$ has degree $2$; hence, it has Galois
Group $(\Z/ 2\Z)$ and $\F$ has order $p^2$. Note that the Galois Group 
of $\F_q$ over
$\F_p$ is $(\Z/ 2\alpha\Z)$. Hence, by Galois theory, $\F_q$ has a subfield $\F'$ 
with Galois Group $(\Z/ 2\Z)$ and order $p^2$. Since the field 
of $p^2$ elements
is unique modulo field isomorphisms, it must follow that $\F'$ is
the splitting field of $\psi$. Thus, $\psi \in (\F_q^*)^2$ and
the conclusion follows.
\end{proof} 

By the previous claim, we arrive at a contradiction.
\end{proof}

We will identify the group $H = (\Z/ p^2\Z)^{\beta}$ with the Galois Ring $GR(p^2,\beta)$, and we will use the special pairing $\theta$ induced by the trace function
of the $GR(p^2,\beta)$. That is,
\begin{eqnarray}
 \nonumber
 \theta(g')(g) &=& \eta_{p^2}^{Tr(g'*g)},
\end{eqnarray}
where we have identified $H$ with $GR(p^2,\beta)$, the operation $*$ is multiplication in the Galois Ring $GR(p^2,\beta)$, and $Tr(\cdot)$ is the Galois trace in $GR(p^2,\beta)$.
We note that the choice of $Tr(\cdot)$ depends on the embedding of
$H = (\Z/ p^2\Z)^{\beta}$ in the Galois Ring $GR(p^2,\beta)$ and
the choice of $\eta_{p^2}$. 
We will denote $Tr(g'*g)$ by $\LRGen{g',g}_H$. Clearly, using the pairing $\theta$ induced
by $Tr(\cdot)$, 
\begin{eqnarray}
\label{newagg1}
 A_{H,H_1}(\Omega_{\theta(g')},\Omega_{g}) &=& \left\{
\begin{array}{cc}
 \QRSym{n} o(p \cdot g) & \text{ if $\LRGen{g',g}_H = n p$ mod $p^2$ for some $n$}\\
  & \text{  where $(n,p) = 1$}\\
 0 & \text{ else}
\end{array}
\right.
\end{eqnarray}
Also, by restricting $Tr(\cdot)$ from $H$ to $L = p \cdot H$, we can induce a pairing on $\theta'$ on $L$ and define $A_{L,L_1}$. Clearly,
$Tr\mid_{L}$ is the regular trace function of the finite field $\F_{p^\beta}$. We will denote $Tr\mid_{L}(g'*g)$ by $\LRGen{g',g}_{L}$.

Now, we proceed to calculate $A_{H,H_1}$ for $H = (\Z/ p^2)^{\beta}$. 

\begin{prop}
\label{ahh1form}
Let $H = (\Z/p^2 \Z)^{\beta}$ and $L = p\cdot H = (\Z/p \Z)^{\beta}$. Then,
there is an ordering of the orbits of the action of $H_1$ on 
$H\backslash \{ 0 \}$, such that:
\begin{eqnarray}
\nonumber
A_{H,H_1} &=&
 	 \left(
		\begin{array}{cccc}
		0 & p A_{L,L_1} & \cdots & p A_{L,L_1}\\
		A_{L,L_1} & p J_{H,1,1} & \cdots & pJ_{H,1,p^{\beta -1}}\\
		\vdots & \vdots & \cdots & \vdots \\
		A_{L,L_1} & p J_{H,p^{\beta -1},1} & \cdots 
			& pJ_{H,p^{\beta-1},p^{\beta -1}}\\
		\end{array}
	\right)
	\begin{array}{c c}
		\} & m_{\beta,p} \\
		\Biggr{\}} & 
		  p^{\beta -1}m_{\beta,p}
	\end{array},
\end{eqnarray}
where $J_{H,i,j}$ has nonzero support on the zero pattern 
of $A_{L,L_1}$,
and $m_{\beta,p} = p^{\beta -1} + p^{\beta -1} + \cdots + p + 1$.
\end{prop}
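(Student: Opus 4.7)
The plan is to use the explicit $H_1$-orbit representatives from Proposition \ref{hzp2a} together with the trace-pairing formula (\ref{newagg1}) to compute $A_{H,H_1}$ block-by-block in the Galois ring $GR(p^2,\beta)$. I would order the orbit representatives so that the $m_{\beta,p}$ representatives $p\cdot l_i$ of the order-$p$ orbits come first, followed by the $p^{\beta-1} m_{\beta,p}$ representatives $h_{i,j} = l_i(1+pl_j')$ of the order-$p^2$ orbits, grouped by the index $j$. This bookkeeping is what produces the block partition $(m_{\beta,p}, \; p^{\beta-1} m_{\beta,p})$ in the statement.

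For each of the four block types I would compute $g'*g \bmod p^2$ directly in the Galois ring and then read off the entry. If both $g$ and $g'$ lie in $L = pH$, then $g'*g \in p^2 H = 0$ and the top-left block vanishes. For the two off-diagonal blocks exactly one factor carries a $p$, so $g'*g \equiv p\,(l_{i'} a) \pmod{p^2}$ or $p\,(a' l_i) \pmod{p^2}$, and the entry depends only on the mod-$p$ reductions of the Teichmuller parts. This immediately produces $A_{L,L_1}$ stacked $p^{\beta-1}$ times in the first column of blocks, and $p A_{L,L_1}$ repeated $p^{\beta-1}$ times in the first row of blocks; the extra factor $p$ in the top row comes from $o(p\cdot g) = p$ whenever the column argument $g$ has order $p^2$.

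The heart of the calculation, and the step I expect to be the main obstacle, is the bottom-right block. There, expanding in $GR(p^2,\beta)$ gives
\[
h_{i',j'} * h_{i,j} \;\equiv\; l_{i'} l_i\bigl(1 + p(l_j' + l_{j'}')\bigr) \pmod{p^2},
\]
so
\[
Tr(h_{i',j'}*h_{i,j}) \;=\; Tr(l_{i'} l_i) + p\,Tr_L\bigl(\overline{l_{i'} l_i}\,(\bar l_j' + \bar l_{j'}')\bigr).
\]
Formula (\ref{newagg1}) demands that $Tr(l_{i'}l_i) \equiv 0 \pmod p$, i.e.\ $Tr_L(\bar l_{i'} \bar l_i) = 0$ in $\F_p$; but this is precisely the condition for $A_{L,L_1}$ to vanish at the entry indexed by $(\bar l_{i'}, \bar l_i)$. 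Consequently each sub-block $J_{H,j',j}$ has support contained in the zero pattern of $A_{L,L_1}$, and whenever an entry is nonzero it carries an extra factor $p$ (again from $o(p\cdot g) = p$). The delicate point here is keeping track of the two-step decomposition of $Tr$ on $GR(p^2,\beta)$: the leading $p^0$-term is controlled by $Tr_L$ on the Teichmuller parts $l_i, l_{i'}$, while the coefficient of $p$ in $Tr(h_{i',j'}*h_{i,j})$ is modified additively by the $l_j', l_{j'}'$ summands. Once this decomposition is in hand, the block structure falls out, and the final step is just to verify that the number of sub-blocks $J_{H,j',j}$ is $p^{\beta-1} \times p^{\beta-1}$, matching the dimensions in the statement.
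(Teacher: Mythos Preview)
Your proposal is correct and follows essentially the same approach as the paper: the paper likewise orders the $H_1$-orbit representatives from Proposition~\ref{hzp2a} with the order-$p$ elements first, then computes each block $A_{i,j}$ by evaluating $\langle g',g\rangle_H = Tr(g'*g)$ in $GR(p^2,\beta)$ and reading off the entry from~(\ref{newagg1}). The only cosmetic difference is that the paper writes the order-$p^2$ representatives additively as $l_k + p\,l_{k,i}'$ rather than multiplicatively as $l_k(1+p\,l_i')$, which leads to the same expansions modulo $p^2$ and the same identification of the support of $J_{H,i,j}$ with the zero pattern of $A_{L,L_1}$.
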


\begin{proof}
We will organize the $H_1$ orbit representatives according to the
parametrization of Proposition \ref{hzp2a}. 
Using the notation of Proposition \ref{hzp2a}, consider the
following enumeration of the orbits:

\begin{displaymath}
\begin{array}{lr}
\nonumber
 \{ p \cdot l_1, \ldots, p \cdot l_{r'} \} &  \text{ for elements of order } p\\
\nonumber
 \{ l_1 + p\cdot l_{1,1}',\ldots , l_{r'} + p \cdot l_{r',1}, l_1 + p\cdot l_{1,2}',\ldots , 
  &  \text{ for elements of order } p^2 \\
\nonumber
  l_{r'} + p \cdot l_{r',2}
  ,\ldots , l_1 + p\cdot l_{1,p^{\beta -1}}',\ldots , l_{r'} +
	 p \cdot l_{r',p^{\beta -1}} \} & 
\end{array}.
\end{displaymath}
Where $r' = m_{\beta,p}$. Using the above enumeration, we can calculate $A_{H,H_1}$ as the following:
\begin{eqnarray}
\nonumber
A_{H,H_1} &=& \left(
		\begin{array}{cccc}
		A_{0,0} & p A_{0,1} & \cdots & p A_{0,p^{\beta -1}}\\
		A_{1,0} & p A_{1,1} & \cdots & p A_{1,p^{\beta -1}}\\
		\vdots & \vdots & \cdots & \vdots \\
		A_{p^{\beta-1},0} & p A_{p^{\beta-1},1} & \cdots & 
			p A_{p^{\beta-1},p^{\beta -1}}\\
		\end{array}
	\right),
\end{eqnarray}
where: the $A_{i,j}$'s are all $r' \times r'$ matrices; 
$A_{0,j}$, where $1 \leq j$, is defined on tuples of the form 
$(p\cdot l_k, l_t + p \cdot l_{t,j}')$;
$A_{i,0}$, where $1 \leq i$, is defined on tuples of the form
$(l_k + p \cdot l_{k,i}', l_t)$;
$A_{0,0}$ is defined on tuples of the form 
$(p\cdot l_k, p \cdot l_t)$;
and $A_{i,j}$, where $1 \leq i, 1\leq j$, is defined on tuples of the form
$(l_k + p \cdot l_{k,i}', l_t + p \cdot l_{t,j}')$.

We proceed to calculate the $A_{i,j}$'s using Equation (\ref{newagg1}).
First, we calculate $A_{0,0}$. Since $A_{0,0}$ is valued on tuples of the form
$(p \cdot l_k, p\cdot l_t)$ and
\begin{eqnarray} 
\nonumber
\LRGen{p\cdot l_k, p \cdot l_t}_H &=& p^2 \LRGen{l_k,l_t}_H\\
\nonumber
	&=& 0 \text{ mod } p^2
\end{eqnarray} 
By Equation (\ref{newagg1}), $A_{0,0} = 0$.

Now, we calculate $A_{i,0}$ ($1 \leq i$). Since $A_{i,0}$ is
valued on tuples of the form $(l_k + p \cdot l_{k,i}', p \cdot l_t)$
and
\begin{eqnarray} 
\nonumber
\LRGen{l_k + p \cdot l_{k,i}', p \cdot l_t}_H &=& 
 p\LRGen{l_k,l_t}_H + p^2\LRGen{l_{k,i}',l_t}_H\\
\nonumber
  &=& p\LRGen{l_k,l_t}_H \text{ mod } p^2 \\
\nonumber
  &=& p\LRGen{\overline{l_k},\overline{l_t}}_L \text{ mod } p^2,
\end{eqnarray}
where $\overline{l_k}$ is the result of taking all the coordinates of $l_k$ mod $p$.
Hence, by using Equation (\ref{newagg1}), we have $A_{i,0} = A_{L,L_1}$.
The proof that $A_{0,j} = A_{L,L_1}$ is similar to this case.

It suffices to show that $A_{i,j}$, where $1 \leq i, 1 \leq j$, has
 nonzero support on
the zero pattern of $A_{L,L_1}$. Clearly, $A_{i,j}$ is valued on the
tuples of the form $(l_k + p \cdot l_{k,i}', l_t + p \cdot l_{t,j}')$ and
\begin{eqnarray}
\nonumber
\LRGen{l_k + p \cdot l_{k,i}', l_t + p \cdot l_{t,j}'}_H &=& 
	\LRGen{l_k,l_t}_H + p(\LRGen{l_{k,i}',l_t}_H + 
		\LRGen{l_t,l_{t,j}'}_H)\\
\nonumber
	& &	+ p^2 \LRGen{l_{k,i}',l_{t,j}'}_H \\
\nonumber
	&=& \LRGen{l_k,l_t}_H \\
\nonumber
	& &	+ p(\LRGen{l_{k,i}',l_t}_H + \LRGen{l_t,l_{t,j}'}_H)
		\text{ mod } p^2. 
\end{eqnarray}
Hence, if $A_{i,j}(l_k + p \cdot l_{k,i}', l_t + p \cdot l_{t,j}') \neq 0$, then
$ \LRGen{l_k + p \cdot l_{k,i}', l_t + p \cdot l_{t,j}'}_H = p f$ mod $p^2$ where 
$f \in \{1,\ldots, p-1\}$. Thus,
\begin{eqnarray}
\nonumber
pf &=& \LRGen{l_k + p \cdot l_{k,i}', l_t + p \cdot l_{t,j}'}_H \\
\nonumber
	&=& \LRGen{l_k,l_t}_H + p(\LRGen{l_{k,i}',l_t}_H + \LRGen{l_t,l_{t,j}'}_H)
		\text{ mod } p^2 ,
\end{eqnarray}
hence, $\LRGen{l_k,l_t}_H = 0$ mod $p$. But $\LRGen{l_k,l_t}_H = 
\LRGen{\overline{l_k},\overline{l_t}}_L$
mod $p$. Thus, if 
$A_{i,j}(l_k + p \cdot l_{k,i}', l_t + p \cdot l_{t,j}') \neq 0$,
then $\LRGen{\overline{l_k},\overline{l_t}}_L = 0$ mod $p$. That is, 
$A_{i,j}$ is nonzero on a subset of the zero pattern of $A_{L,L_1}$.
\end{proof}

\begin{cor}
\label{ahh1form2}
Let $H$ and $A_{H,H_1}$ be given as they are in Proposition \ref{ahh1form}.
Let
\begin{eqnarray}
\nonumber
 B_H' &=& \left(
		\begin{array}{ccc}
			J_{H,1,1} & \cdots & J_{H,1,p^{\beta -1}}\\
			\vdots & \cdots & \vdots\\
			J_{H,p^{\beta-1},1} & \cdots 
				& J_{H,p^{\beta-1},p^{\beta -1}}\\
		\end{array}
	\right),
\end{eqnarray}
and $B_H = p B_H'$. Then,
 \begin{enumerate}
   \item The following holds $B_H' J = 0$, i.e., zero row sums; and, 
	$\sum_{j=1}^{p^{\beta-1}} J_{H,i,j} = 0$.
   \item The following holds $J B_H' = 0$, i.e., zero column sums; and, 
	$\sum_{i=1}^{p^{\beta-1}} J_{H,i,j} = 0$.
   \item The following holds $B_H^3 = \frac{|H|}{p} B_H = p^{2\beta -1} B_H$.
   \item The following holds $B_H^2 = p^{2\beta -1} I -
	p^{\beta} J_{p^{\beta-1},p^{\beta-1}} \otimes I_{r'}$.
 \end{enumerate}
\end{cor}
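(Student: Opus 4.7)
The plan is to extract everything from the single identity $A_{H,H_1}^2 = \frac{|H|}{p}I = p^{2\beta-1} I$ of Corollary \ref{cor1}, together with its counterpart $A_{L,L_1}^2 = p^{\beta-1} I$ for the subgroup $L = p \cdot H$. All four parts drop out of block-by-block computation of $A_{H,H_1}^2$ using the decomposition from Proposition \ref{ahh1form}, where the block indices run $0, 1, \ldots, p^{\beta-1}$ with index $0$ the order-$p$ block and blocks $1,\ldots,p^{\beta-1}$ each of size $r' \times r'$.

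First I would prove parts 1 and 2. Computing the $(i,0)$ block of $A_{H,H_1}^2$ for $i \geq 1$ gives
\begin{eqnarray}
\nonumber
(A_{H,H_1}^2)_{i,0} &=& A_{L,L_1} \cdot 0 + \sum_{j=1}^{p^{\beta-1}} (pJ_{H,i,j}) A_{L,L_1} \;=\; p\LRGen{\sum_{j=1}^{p^{\beta-1}} J_{H,i,j}} A_{L,L_1}.
\end{eqnarray}
This must vanish since $(i,0)$ is off-diagonal in $p^{2\beta-1} I$. Because $A_{L,L_1}^2 = p^{\beta-1} I$ makes $A_{L,L_1}$ invertible over $\Q$, we conclude $\sum_j J_{H,i,j} = 0$, which is part 1. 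A symmetric computation of the $(0,j)$ block yields $p^2 A_{L,L_1} \sum_{i=1}^{p^{\beta-1}} J_{H,i,j} = 0$, giving part 2.

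Next I would prove part 4 by computing the $(i,j)$ block of $A_{H,H_1}^2$ for $i,j \geq 1$:
\begin{eqnarray}
\nonumber
(A_{H,H_1}^2)_{i,j} &=& A_{L,L_1}(pA_{L,L_1}) + \sum_{k=1}^{p^{\beta-1}} (pJ_{H,i,k})(pJ_{H,k,j}) \\
\nonumber
&=& pA_{L,L_1}^2 + p^2 (B_H'^2)_{i,j} \;=\; p^{\beta} I + p^2 (B_H'^2)_{i,j}.
\end{eqnarray}
Setting this equal to $p^{2\beta-1} \delta_{ij} I$ and solving for the block gives $(B_H'^2)_{i,j} = (p^{2\beta-3}\delta_{ij} - p^{\beta-2}) I_{r'}$, hence $B_H'^2 = p^{2\beta-3} I - p^{\beta-2} J_{p^{\beta-1},p^{\beta-1}} \otimes I_{r'}$. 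Multiplying by $p^2$ yields $B_H^2 = p^{2\beta-1} I - p^\beta J_{p^{\beta-1},p^{\beta-1}} \otimes I_{r'}$, which is part 4.

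Finally, I would deduce part 3 by right-multiplying part 4 by $B_H$: the first summand contributes $p^{2\beta-1} B_H$, while the $(i,j)$ block of $(J_{p^{\beta-1},p^{\beta-1}} \otimes I_{r'}) B_H$ is $p \sum_{k} J_{H,k,j} = 0$ by part 2, so the second summand vanishes and we obtain $B_H^3 = p^{2\beta-1} B_H = \frac{|H|}{p} B_H$. The only real obstacle is keeping the block-multiplication bookkeeping straight (in particular remembering that the first block-row and block-column have a different structure from the others); once the indices are arranged carefully, the four identities are all immediate consequences of $A_{H,H_1}^2 = p^{2\beta-1} I$ combined with $A_{L,L_1}^2 = p^{\beta-1} I$.
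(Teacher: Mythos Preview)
Your proposal is correct and follows essentially the same approach as the paper: both derive all four parts by reading off the block entries of the identity $A_{H,H_1}^2 = p^{2\beta-1}I$, using $A_{L,L_1}^2 = p^{\beta-1}I$ and the invertibility of $A_{L,L_1}$ along the way. The paper's proof even has the labels for parts 3 and 4 swapped in its text, but the underlying computations (first the $(i,j)$ block identity giving $B_H^2$, then multiplying by $B_H$ to get $B_H^3$) are identical to yours.
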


\begin{proof}
This is a consequence of $A_{H,H_1}^2 = p^{2\beta -1} I$.
Consider,
\begin{eqnarray}
\label{ahh1iden1} 
 p^{2\beta -1} I &=&
\end{eqnarray}
\begin{displaymath}
\begin{array}{l}
\nonumber
	\left(
		\begin{array}{cccc}
		0 & p A_{L,L_1} & \cdots & p A_{L,L_1}\\
		A_{L,L_1} & p J_{H,1,1} & \cdots & pJ_{H,1,p^{\beta -1}}\\
		\vdots & \vdots & \cdots & \vdots \\
		A_{L,L_1} & p J_{H,p^{\beta -1},1} & \cdots 
			& pJ_{H,p^{\beta-1},p^{\beta -1}}\\
		\end{array}
	\right)
	\left(
		\begin{array}{cccc}
		0 & p A_{L,L_1} & \cdots & p A_{L,L_1}\\
		A_{L,L_1} & p J_{H,1,1} & \cdots & pJ_{H,1,p^{\beta -1}}\\
		\vdots & \vdots & \cdots & \vdots \\
		A_{L,L_1} & p J_{H,p^{\beta -1},1} & \cdots 
			& pJ_{H,p^{\beta-1},p^{\beta -1}}\\
		\end{array}
	\right), 
\end{array}
\end{displaymath}
Equation (\ref{ahh1iden1}) gives:
\begin{eqnarray}
\nonumber
 p(\sum_{j=1}^{p^{\beta-1}} J_{H,i,j})A_{L,L_1} &=& 0,\\ 
\nonumber
 pA_{L,L_1}(\sum_{i=1}^{p^{\beta-1}} J_{H,i,j}) &=& 0.
\end{eqnarray}
From which part 1 and part 2 follow.

Note that Equation (\ref{ahh1iden1}) also gives the following equation.
\begin{eqnarray}
\nonumber
 pA_{L,L_1}^2 + \sum_{k=1}^{p^{\beta-1}} (pJ_{H,i,k})(pJ_{H,k,j}) &=&
	\delta_{i,j}p^{2\beta -1} I.
\end{eqnarray}
Since $A_{L,L_1}^2 = p^{\beta -1} I$, we get that
\begin{eqnarray}
\label{ahh1iden2}
 \sum_{k=1}^{p^{\beta-1}} (pJ_{H,i,k})(pJ_{H,k,j}) &=&
	\delta_{i,j}p^{2\beta -1} I - p^{\beta} I.
\end{eqnarray}
From which part 3 follows.

To show part 4, apply $B_H$ to Equation (\ref{ahh1iden2}),
\begin{eqnarray}
\nonumber
 \sum_{k=1,j=1}^{p^{\beta-1}} (pJ_{H,i,k})(pJ_{H,k,j})(pJ_{H,j,l}) &=&
	\sum_{j=1}^{p^{\beta-1}}(\delta_{i,j}p^{2\beta -1} I - p^{\beta} I)
		(pJ_{H,j,l})\\
\nonumber
 &=& \sum_{j=1}^{p^{\beta -1}}\delta_{i,j}p^{2\beta -1}(pJ_{H,j,l})
 - p^{\beta}  \sum_{j=1}^{p^{\beta-1}} (pJ_{H,j,l}) \\
\nonumber
 &=& \sum_{j=1}^{p^{\beta -1}}\delta_{i,j}p^{2\beta -1}(pJ_{H,j,l})\\
\nonumber
 &=& p^{2\beta -1}(pJ_{H,i,l}),
\end{eqnarray}
from which part 4 follows.
\end{proof}

Now, we proceed to construct the element $L_0(x)$ of Proposition (\ref{gshds3_prop1}). We will consider the groups
$H = (\Z/ p^2\Z)^{2\alpha +1 }$ and choose the $l_i$'s in Proposition (\ref{hzp2a}) as the quadratic
residues in $\F_{p^{2\alpha +1}}$. We note that the set of quadratic residues in $(\F_{p^{2\alpha + 1}}, +)$ is a GSHDS. We will omit
the proof of this statement. This choice for the $l_i$'s will impose conditions on the elements of the matrix $A_{H,H_1}$ as it is shown
in the next proposition.

\begin{prop}
\label{SpcCaseP4}
Let $\underline{j}$  be the vector 
 all $1$'s.
 Using the assumptions on the $l_i$'s, the results and the notation of 
 Propositions (\ref{ahh1form}) and Corollary (\ref{ahh1form2}); where
 $H= (\Z/ p\Z)^{2 \alpha+1}$ and 
 $L= p \cdot H \simeq (\Z/ p\Z)^{2\alpha+1}$, the following hold: 
 \begin{enumerate}
  \item We have that $J_{H,s,t} \underline{j} = \lambda_{s,t} \underline{j}$
 for some integer $\lambda_{s,t}$. More percisely,
 \begin{eqnarray}
  \nonumber 
    \lambda_{s,t} &=& \sum_{j=1}^{m_{2\alpha +1,p}}\QRSym{Tr(l_j(1+ p(l_s' + l_t')))},
 \end{eqnarray}
  where we have extended $\QRSym{\cdot}$ so that $\QRSym{n} = 0$ whenever $p$ divides $n$, and
$m_{2\alpha +1,p} = \frac{p^{2\alpha+1}-1}{p-1}$.
  \item We have that $A_{L,L_1} \underline{j} = \epsilon_0 p^{\alpha} \underline{j}$
where $\epsilon_0 \in \{ 1, -1 \}$.
  \item Let $L = [[ \lambda_{s,t} ]]$, the matrix formed by the 
$\lambda_{s,t}$'s. Then,
  \begin{eqnarray}
\nonumber
 L^2 &=& p^{2\alpha -1} ( p^{2\alpha} I - J).
  \end{eqnarray}
  \item We have that $L^T = L$.
 \end{enumerate}
\end{prop}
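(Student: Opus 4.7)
The plan is to establish Part 1 first (the computational heart of the proposition and, I expect, the main obstacle), from which Part 4 follows immediately by symmetry. Part 3 then drops out from Corollary \ref{ahh1form2} via an invariant-subspace argument that Part 1 enables, while Part 2 is handled independently by recognizing $\underline{j}$ as the $\pm 1$-representation of the Paley-type GSHDS of squares in $\F_q^*$, where $q = p^{2\alpha+1}$.

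For Part 1, I exploit the $GR(p^2, 2\alpha+1)$ structure to compute the row sums of $J_{H,s,t}$ explicitly. Using $h_{k,s} = l_k(1 + p\, l'_s)$ and $h_{t',t} = l_{t'}(1 + p\, l'_t)$ and expanding the trace pairing,
\begin{eqnarray}
\nonumber
\LRGen{h_{k,s}, h_{t',t}}_H &\equiv& Tr_H(l_k l_{t'}) + p\, Tr_H\!\left(l_k l_{t'}(l'_s + l'_t)\right) \pmod{p^2}.
\end{eqnarray}
For fixed $k$, as $t'$ ranges over $\{1, \ldots, r'\}$ the products $l_k l_{t'}$ form a full set of representatives of $\mu_{q-1}^2 / \mu_{p-1}^2$, since the $l_i$ are themselves such representatives and left-multiplication by $l_k \in \mu_{q-1}^2$ permutes $\mu_{q-1}^2$. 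Writing $l_k l_{t'} = \xi_{t'}^2 \cdot l_{\sigma(t')}$ with $\xi_{t'} \in \mu_{p-1}$ and $\sigma$ a bijection of $\{1, \ldots, r'\}$, I factor $\xi_{t'}^2$ out of both traces and use $\QRSym{\xi_{t'}^2} = 1$: the resulting contribution to $J_{H,s,t}(k, t')$ depends only on $\sigma(t')$, not on $k$. Summing over $t'$ and reindexing via $j = \sigma(t')$ gives the stated formula for $\lambda_{s,t}$ and shows $J_{H,s,t}\underline{j} = \lambda_{s,t} \underline{j}$. Part 4 is then immediate from the symmetry $l'_s + l'_t = l'_t + l'_s$ inside the formula.

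For Part 2, I identify $L = p\cdot H$ with $\F_q$ via $pH \simeq H/pH$. The reductions $\bar{l_i} \in \F_q^*$ of the Teichmuller QR representatives $l_i \in \mu_{q-1}^2$ are themselves squares in $\F_q^*$, so $\underline{j}$ is the $\pm 1$-representation of the QRS $D_L = (\F_q^*)^2$. A change of variable in the expansion $\chi_{\bar{l_i}}(D_L) = \frac{1}{2} \sum_{x \in \F_q^*} \eta_p^{Tr_L(\bar{l_i} x^2)}$ removing the factor $\bar{l_i}$ from $x^2$ reduces the sum to $\frac{1}{2}(\tau - 1)$, where $\tau$ is the standard $\F_q$-quadratic Gauss sum and in particular does not depend on $i$. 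Combining this with Proposition \ref{prop5} and $|\tau|^2 = q$ forces $\tau = \epsilon_0\, p^{\alpha}\sqrt{\QRSym{-1}p}$ for a common sign $\epsilon_0 \in \{-1,1\}$, and every entry of $A_{L,L_1}\underline{j}$ then equals $\epsilon_0\, p^{\alpha}$.

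For Part 3, the identity $J_{H,s,t}\underline{j} = \lambda_{s,t}\underline{j}$ from Part 1 shows that $B_H' = [[J_{H,i,j}]]$ preserves the $p^{2\alpha}$-dimensional subspace $V = \R^{p^{2\alpha}} \otimes \underline{j}$ of $\R^{p^{2\alpha}} \otimes \R^{r'}$ and acts on it as the matrix $L$, via $v \otimes \underline{j} \mapsto Lv \otimes \underline{j}$. Corollary \ref{ahh1form2}(4), together with $B_H = p\, B_H'$, yields $(B_H')^2 = p^{4\alpha - 1} I - p^{2\alpha - 1}\, (J \otimes I)$; restricting to $V$, on which $J \otimes I$ reduces to $J_{p^{2\alpha},p^{2\alpha}}$, produces $L^2 = p^{4\alpha-1} I - p^{2\alpha-1} J = p^{2\alpha-1}(p^{2\alpha} I - J)$, as required.
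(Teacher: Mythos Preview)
Your proof is correct and follows essentially the same approach as the paper. Part 1 is identical to the paper's argument (the permutation $\sigma$ on representatives and the disappearance of the $\mu_{p-1}^2$ factor in the Legendre symbol); Part 3 is the paper's ``multiply Corollary~\ref{ahh1form2} by $\underline{j}$'' step, rephrased as an invariant-subspace argument; and Part 4 matches. The only genuine variation is in Part 2: the paper first invokes the GSHDS machinery (Proposition~\ref{prop3}(3)) to get $A_{L,L_1}\underline{j} = p^{\alpha}\overline{d}$ for some $\pm 1$ vector $\overline{d}$, and then re-runs the permutation argument from Part 1 to show the entries of $\overline{d}$ are constant, whereas you bypass this and compute the difference coefficients directly as a single quadratic Gauss sum over $\F_q$; your route is slightly more self-contained, the paper's reuses more of the earlier framework, but both are short and equivalent in substance.
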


\begin{proof}
Part 1 is a direct consequence of our choice of $l_i$'s and the choice of $\theta(g)(g')$. Clearly, the $l_i$'s form a group
under Galois Ring multiplication.
Also, $J_{H,s,t}$ is a matrix on the tuples
$(l_i(1+pl_s'),l_j(1+pl_t'))$; hence, the $i$th coordinate
of $J_{H,s,t}\underline{j}$ is equal to:
\begin{eqnarray}
 \nonumber
 \sum_{j=1}^{m_{2\alpha +1,p}} \left(\frac{Tr(l_i(1+pl_s')l_j(1+pl_t'))}{p}
	\right) &=&
 \sum_{j=1}^{m_{2\alpha+1,p}} \left(\frac{Tr(l_il_j(1+p(l_s'+l_t')))}{p}
	\right).
\end{eqnarray}
Note that 
$(1+p(l_s' + l_t'))$ is fixed in the above sum. Also,
$l_i l_j = l_{\sigma(j)} \alpha_{i,j}$ where: $\sigma$ is the induced
permutation action of $l_i$ on $\frac{\mu_{q-1}^2}{\mu_{p-1}^2}$, and
$\alpha_{i,j} \in \mu_{p-1}^2$ is some quadratic residue mod $p$.
Clearly,
\begin{eqnarray}
  \nonumber
  \QRSym{Tr(l_i l_j (1+ p(l_s' + l_t') ))} &=& \QRSym{Tr(l_{\sigma(j)} \alpha_{i,j}(1+p(l_s' + l_t') ))}\\
 \nonumber
  &=& \QRSym{\alpha_{i,j}}\QRSym{Tr(l_{\sigma(j)}(1+p(l_s' + l_t')))}\\
  \nonumber
  &=& \QRSym{Tr(l_{\sigma(j)}(1+p(l_s' + l_t')))}\\
  \nonumber
  &=& \QRSym{Tr(l_{\sigma(j)}l_{s,t}'')},
\end{eqnarray}
where $l_{s,t}'' = 1 + p (l_s' + l_t')$. Hence, the $i$th coordinate
of $J_{H,s,t}\underline{j}$ is equal to:
\begin{eqnarray}
 \nonumber
 \sum_{j=1}^{m_{2\alpha+1,p}} \QRSym{Tr(l_i l_j (1 + p(l_s' + l_t')))} &=&
  \sum_{j=1}^{m_{2\alpha+1,p}} \QRSym{Tr(l_{\sigma(j)}l_{s,t}'')}\\
 \nonumber
  &=& \sum_{j=1}^{m_{2\alpha+1,p}} \QRSym{Tr(l_{j}l_{s,t}'')},
\end{eqnarray}
which is a value that is independent of $i$ but dependent on $s,t$. 
This shows part 1.

Now, we proceed to show part 2. Note that by the choice of 
the $l_i$'s, the vector of all $1$'s 
,$\underline{j}$, represents the Quadratic Residues 
$(\F_{p^{2\alpha+1}}^*)^2$ 
of $L \simeq (\F_{p^{2\alpha +1}},+)$. 
Hence, by Proposition (\ref{prop3}) part 3,
$A_{L,L_1} \underline{j} = p^{\alpha} \overline{d}$ for some vector $\overline{d}$ 
of $\pm 1$s, since $\underline{j}$ is a GSHDS in $L$. Clearly, the $i$th coordinate of $A_{L,L_1} \underline{j}$ is given by:
\begin{eqnarray}
 \nonumber
 \sum_{j=1}^{m_{2\alpha +1,p}} \QRSym{Tr(\overline{l_i} \overline{l_j})},
\end{eqnarray}
where $\overline{l_i} = l_i$ mod $p$, and 
$\overline{l_j} = l_j$ mod $p$. Clearly, $\{ \overline{l_1},\cdots
,\overline{l_{m_{2\alpha +1,p}}} \} = \frac{(\F_{p^{2\alpha+1}}^*)^2}{(\F_p^*)^2}$
is a group; and, by using a similar reasoning as part 1, we can deduce
that the $i$th coordinate of $A_{L,L_1}\underline{j}$ is given
by:
\begin{eqnarray}
 \nonumber
 \sum_{j=1}^{m_{2\alpha +1,p}} \QRSym{Tr(\overline{l_i} \overline{l_j})} &=&
 \sum_{j=1}^{m_{2\alpha +1,p}} \QRSym{Tr(\overline{l_{\sigma(j)}})} \\
 \nonumber
 &=& \sum_{j=1}^{m_{2\alpha +1,p}} \QRSym{Tr(\overline{l_{j}})}\\
 \nonumber
 &=& \epsilon_0 p^{\alpha},
\end{eqnarray}
where $\epsilon_0 = \pm 1$. Thus, the $i$th
coordinate $A_{L,L_1}\underline{j}$ is independent
of $i$ and the second part follows.

To show part 3, consider the following equation from 
Corollary \ref{ahh1form2}.
\begin{eqnarray}
\nonumber
 \sum_{k=1}^{m_{2\alpha+1,p}} (pJ_{H,i,k})(pJ_{H,k,j}) &=&
	\delta_{i,j}p^{4\alpha+1} I - p^{2\alpha + 1} I
\end{eqnarray}
We can deduce part 3 by multiplying the previous equation by $\underline{j}$ and simplifying.
Part 4 follows from $J_{H,s,t} = J_{H,t,s}$. 
\end{proof}

We will identify each block $J_{H,i,j}$ of $A_{H,H_1}$ with
the element $l_j' \in \frac{ \mu_{q-1} \cup \{ 0 \} }{ \mu_{p-1}
\cup \{ 0 \}} \simeq 
\frac{(\F_q,+)}{( \F_p,+)} \simeq (\Z/ p\Z)^{2\alpha}$.
Under this identification,
the matrix $L$ of Proposition \ref{SpcCaseP4} can be thought
as a matrix over entries indexed by the elements
of  $(\F_{p^{2\alpha + 1}}, + )$ mod
$(\F_p, +)$. That is, as a matrix indexed by
elements in $(\F_{p^{2\alpha}},+) \simeq
(\Z/ p\Z)^{2\alpha} = K$. Thus, for each $l_j'$ denote $k_j$ the
corresponding element in $K$. We will assume an
ordering of the $l_j'$'s such that $l_1'$ corresponds to
the $0$ element of $K$.

Let us introduce the following element of $\Z[K]$.
\begin{eqnarray}
\nonumber
 L_0(x) &=& \sum_{k_i \in K} \lambda_{1,i} x^{k_i}
\end{eqnarray}

Where we have assumed that $k_1 = 0$ the additive identity of $K$.
We will show some properties of $L_0(x)$, and we will use
$L_0(x)$ to establish necessary existence conditions for GSHDS in $G$.

\begin{prop}
 Let $\rho_K$ be the regular representation of $K$ with
action on $Z = \bigoplus_{i=1}^{p^{2\alpha}} \C e_{k_i}$. Then:
 \begin{enumerate}
  \item For $g \in K$, $\rho_K(g^{-1}) L = L \rho_K(g)$, as matrices.
  \item For $\chi \in \overline{K}$, 
   $L e_{\chi} = \overline{\chi}(L_0) e_{\overline{\chi}}$ where:
	\begin{eqnarray}
	\nonumber
		e_{\chi} &=& \frac{1}{p^{2\alpha}} 
		\sum_{k \in K}  \overline{\chi}(k) e_k.
	\end{eqnarray}
  \item For $\chi \in \overline{K}$, $\chi$ non principal. 
   $\chi(L_0) \overline{\chi}(L_0) = p^{4\alpha -1 }$. 
  \item Let $\chi_0$ be the principal character of $K$, then
  $\chi_0 (L_0) = 0$.
  \item $L_0(x) L_0(x^{-1}) = p^{4\alpha -1} [1] - p^{2\alpha -1} K(x)$,
  in the group algebra of $K$.
 \end{enumerate}
\end{prop}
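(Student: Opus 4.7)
The plan is to work almost entirely from the two facts already in hand from Proposition \ref{SpcCaseP4}: that $L$ is symmetric and that $L^2 = p^{2\alpha-1}(p^{2\alpha} I - J)$, together with the explicit formula
\[
\lambda_{s,t} \;=\; \sum_{j=1}^{m_{2\alpha+1,p}}\QRSym{Tr(l_j(1+p(l_s'+l_t')))}.
\]
The first observation is that $\lambda_{s,t}$ depends on $l_s'$ and $l_t'$ only through the sum $l_s'+l_t'$, hence, under the identification $l_j' \leftrightarrow k_j \in K$, only through $k_s+k_t$. Writing $\lambda_{s,t}=f(k_s+k_t)$ with $f(k_i)=\lambda_{1,i}$ (recall $k_1=0$), the entries of $L$ are exactly the coefficients of $L_0$ read off the sum $k_s+k_t$. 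Part 1 is then a one-line matrix computation: both $(\rho_K(g^{-1})L)_{k,k'}$ and $(L\rho_K(g))_{k,k'}$ equal $f(k+k'+g)$. So $L$ is a ``convolution-by-$L_0$'' operator composed with the inversion $k\mapsto -k$.

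For Part 2 I would substitute $e_\chi=\frac{1}{p^{2\alpha}}\sum_k\overline{\chi}(k)e_k$ into $Le_\chi$, reindex via $k''=k+k'$, and factor:
\begin{eqnarray}
\nonumber
Le_\chi &=& \frac{1}{p^{2\alpha}}\sum_{k'}\Bigl(\sum_{k''}\overline{\chi}(k'')f(k'')\Bigr)\chi(k')\,e_{k'}\\
\nonumber
&=& \overline{\chi}(L_0)\cdot\frac{1}{p^{2\alpha}}\sum_{k'}\chi(k')e_{k'}\;=\;\overline{\chi}(L_0)\,e_{\overline{\chi}},
\end{eqnarray}
where we used $\overline{\chi}(k''-k')=\overline{\chi}(k'')\chi(k')$ and the standard formula for the primitive idempotent of $\overline{\chi}$. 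Part 3 then follows by applying $L$ twice: $L^2 e_\chi=\chi(L_0)\overline{\chi}(L_0)\,e_\chi$, while from Proposition \ref{SpcCaseP4} and the fact that $Je_\chi=0$ for non-principal $\chi$, we also have $L^2 e_\chi = p^{2\alpha-1}\cdot p^{2\alpha}\,e_\chi$. Equating eigenvalues gives $\chi(L_0)\overline{\chi}(L_0)=p^{4\alpha-1}$. For Part 4, applied to $e_{\chi_0}$ the same scheme gives $L^2 e_{\chi_0}=\chi_0(L_0)^2 e_{\chi_0}$ on one side and $p^{2\alpha-1}(p^{2\alpha}-p^{2\alpha})e_{\chi_0}=0$ on the other (since $Je_{\chi_0}=p^{2\alpha}e_{\chi_0}$), forcing $\chi_0(L_0)=0$.

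For Part 5 I would invoke Fourier inversion in $\Z[K]$. Using $\chi(L_0(x^{-1}))=\overline{\chi}(L_0)$, the character values of $L_0(x)L_0(x^{-1})$ are $\chi(L_0)\overline{\chi}(L_0)=p^{4\alpha-1}$ for non-principal $\chi$ and $0$ for $\chi_0$; these coincide exactly with the character values of $p^{4\alpha-1}[1]-p^{2\alpha-1}K(x)$, so the two elements of $\Z[K]$ are equal. The only real obstacle is the bookkeeping in Part 1 (making precise the identification of block indices with elements of $K$ and confirming that the diagonal block $\lambda_{s,t}=f(k_s+k_t)$ really aligns with $L_0$ as defined, with $\lambda_{1,i}=f(k_i)$); once that is in place, Parts 2--5 are an almost mechanical spectral computation from $L^2 = p^{2\alpha-1}(p^{2\alpha}I-J)$.
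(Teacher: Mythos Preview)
Your proof is correct and follows the same overall strategy as the paper (spectral analysis of $L$ combined with Fourier inversion on $K$), but you take a more direct route at two places.

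For Part~1, the paper does not work from the explicit formula for $\lambda_{s,t}$. Instead it invokes the $Aut(H)$-equivariance of $A_{H,H_1}$ (Proposition~\ref{prop2_1}): for the automorphism $\sigma=1+pl_j'$ one has $\sigma^*=\sigma$, so the identity $\rho_X((\sigma^*)^{-1})A_{H,H_1}=A_{H,H_1}\rho_X(\sigma)$ descends, after multiplying by the block-indicator vectors $e_{k_s}$, to $\rho_K(k_j^{-1})L=L\rho_K(k_j)$. Your route---reading off that $\lambda_{s,t}=f(k_s+k_t)$ directly from the formula---is shorter, but one step is not quite automatic: the formula shows $\lambda_{s,t}$ depends only on $l_s'+l_t'\in\F_q$, and you need it to depend only on the coset $k_s+k_t\in K=\F_q/\F_p$. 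This holds because replacing $l_s'+l_t'$ by $l_s'+l_t'+c$ with $c\in\F_p$ multiplies $1+p(l_s'+l_t')$ by $1+pc$, and $Tr$ is $(\Z/p^2\Z)$-linear, so each summand picks up a factor $\QRSym{1+pc}=\QRSym{1}=1$. You flag this as ``bookkeeping,'' but it is the one substantive check, so make it explicit.

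For Part~4, the paper instead appeals to Corollary~\ref{ahh1form2}: the vanishing of the block row sums $\sum_j J_{H,i,j}=0$, combined with $J_{H,i,j}\underline{j}=\lambda_{i,j}\underline{j}$, forces $\sum_j\lambda_{i,j}=0$, hence $\chi_0(L_0)=0$. Your argument via $L^2 e_{\chi_0}=0$ is equally valid and has the virtue of staying entirely within the eigenvalue framework you already set up for Parts~2 and~3. For Part~2 the paper uses the projections $P_\chi=\frac{1}{|K|}\sum_k\overline{\chi}(k)\rho_K(k)$ and the relation $P_{\overline{\chi}}L=LP_\chi$ to see $Le_\chi\in\langle e_{\overline{\chi}}\rangle$, then reads off the scalar from the first coordinate; your direct reindexing $k''=k+k'$ achieves the same thing in one line. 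Parts~3 and~5 are essentially identical to the paper's.
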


\begin{proof}
We proceed to show 
part 1. Let $k_j \in K$ and $l_j'$ the corresponding
element to $k_j$. Consider $\sigma = 1+ p l_j' \in H$, clearly $\sigma$ can be
viewed as an element of $Aut(H)$ by $\sigma(g) = (1+p l_j')*g$. Note that
$(\sigma)^* = \sigma$, because,
\begin{eqnarray}
 \nonumber
 Tr(g \sigma(g')) &=& Tr(g(1+p l_j') g')\\
 \nonumber
	&=& Tr( (1+pl_j') g g')\\
 \nonumber
  &=& Tr(\sigma(g) g').
\end{eqnarray}
Hence, $((\sigma)^*)^{-1} = (\sigma)^{-1} = 1- pl_j'$.

By Proposition \ref{prop2_1}, it follows that:
\begin{eqnarray}
 \label{SpcCaseE1}
  \rho_X(1- pl_j') A_{H,H_1} &=& A_{H,H_1} \rho_X(1+pl_j')
\end{eqnarray}

We note that $\rho_X(1 + pl_j')$ permutes the blocks
$J_{H,s,t}$, and it acts like $\rho_K(k_j)$ when we identify 
the blocks $J_{H,s,t}$ with their corresponding elements
of $K$. Similarly, $\rho_X(1-pl_j')$ acts like
$\rho_K(k_j^{-1})$ on the blocks $J_{H,s,t}$.

We will show 
\begin{eqnarray}
\label{SpcCaseE2}
\rho_K(k_j^{-1}) L &=& L \rho_K(k_j),
\end{eqnarray} 
by showing
that the columns of the left hand side equal to the columns
of the right hand side.
Thus, consider the column of the left
hand side of Equation (\ref{SpcCaseE2})
that corresponds to $k_s$. Clearly, this equals to
\begin{eqnarray}
 \nonumber
 \rho_K(k_j^{-1}) \LRVec{\begin{array}{c} \lambda_{1,s} \\ \vdots
\\ \lambda_{p^{2\alpha},s} \end{array}}.
\end{eqnarray}
Clearly, this is equal to the left hand side of 
Equation (\ref{SpcCaseE1}) after we multiply it on the right
with the vector:
\begin{eqnarray}
 \nonumber
 e_{k_s} &=& \LRVec{ \begin{array}{c}
	0\\
	\vdots\\
        0\\
	1\\
	1\\
	\vdots \\
	1\\
	0\\
	\vdots\\
	0	
	\end{array} }, 
\end{eqnarray}
where the $1$'s corresponds to the block $\{l_1(1+pl_s'),
\ldots, l_{r'}(1+pl_s') \}$. 

Note that when we multiply the right hand side of
Equation (\ref{SpcCaseE1})  by $e_{k_s}$ on the right, we get
the column that corresponds to $k_s$ in the right hand side
of Equation (\ref{SpcCaseE2}). This establishes part 1.

For part 2, consider the matrices:
\begin{eqnarray}
 \nonumber
 P_{\chi} &=& \frac{1}{p^{2\alpha}} \sum_{k \in K} \overline{\chi}(k)
		\rho_K(k),
\end{eqnarray}
where $\chi \in \overline{K}$. Clearly, the set of $P_{\chi}$ forms a complete
set of orthogonal projections. That is, 
$\sum_{\chi \in \overline{K}} P_{\chi} = I$ and
$P_{\chi} P_{\chi'} = \delta_{\chi,\chi'} P_{\chi'}$.
Also, one can calculate $P_{\chi} e_{\chi'} = 
\delta_{\chi,\chi'} e_{\chi'}$.

By part 1, we can show $P_{\overline{\chi'}} L = L P_{\chi'}$ for
all $\chi'$.
Hence,
\begin{eqnarray}
 \nonumber
  P_{\chi'} L e_{\chi} &=& L P_{\overline{\chi}'} e_{\chi}\\
 \nonumber
   &=& \left\{ \begin{array}{cc}
		Le_{\chi} & \text{ If } \chi' = \overline{\chi},\\
		0 & \text{ else}.
	\end{array} \right.
\end{eqnarray}
Thus, $L e_{\chi}$ belongs to $Im(P_{\overline{\chi}}) = 
\LRGen{e_{\overline{\chi}}}$. Hence,
\begin{eqnarray}
 \nonumber
  L e_{\chi} &=& \alpha e_{\overline{\chi}}
\end{eqnarray}
for some complex number $\alpha \in \C$. We can calculate the number $\alpha$
by considering the first row of $L e_{\chi}$. Clearly, this row
has value:
\begin{eqnarray}
\nonumber
L_0^T e_{\chi} &=& \LRGen{L_0,e_{\overline{\chi}}}\\
\nonumber
 &=& \frac{\overline{\chi}(L_0)}{p^{2\alpha}}.
\end{eqnarray}
On the other hand the first row of $\alpha e_{\overline{\chi}}$ has
value $\frac{\alpha}{p^{2\alpha}}$.
Thus, $\alpha = \overline{\chi}(L_0)$. Hence, part 2 follows.

Now, we proceed to show part 3. Note that by part 2:
\begin{eqnarray}
 \nonumber
 L e_{\chi} &=& \overline{\chi}(L_0) e_{\overline{\chi}},\\
 \nonumber
 L e_{\overline{\chi}} &=& \chi(L_0) e_{\chi},
\end{eqnarray}
hence,
\begin{eqnarray}
 \nonumber
 \chi(L_0) \overline{\chi}(L_0) &=& \chi(L_0) \overline{\chi}(L_0) 
	e_{\chi}^T e_{\overline{\chi}}\\
 \nonumber
 &=& (L e_{\overline{\chi}})^T (L e_{\chi}) \\
 \nonumber
 &=& e_{\overline{\chi}}^T L^T L e_{\chi} \\
 \nonumber
 &=& e_{\overline{\chi}}^T L^2 e_{\chi} \\
 \nonumber
 &=& e_{\overline{\chi}}^T ( p^{2\alpha -1}(p^{2\alpha} I - J ) ) e_{\chi} \\
 \nonumber
 &=& p^{4\alpha-1} e_{\overline{\chi}}^T e_{\chi}\\
 \nonumber
 &=& p^{4\alpha-1}.
\end{eqnarray}
Thus, part 3 follows.

Clearly, part 4 follows from Corollary (\ref{ahh1form2}), since
the row sums of $B_H'$ are zero. This forces the row
sums of $L$ to be zero. In particular, the sum of all the entries
of $L_0$ are zero.
Part 5 follows from parts 3 and 4 by using Fourier Inversion.
\end{proof}

We note that the calculation of $L_0(x)$ depends on the choice of the embedding of $(\Z/ p^2\Z)^{2\alpha +1}$ in $GR(p^2,2\alpha +1)$,
the choice of the quadratic residues $l_i$'s, and the choice of the trace function. We leave the direct
calculation of $L_0$ as an open problem.

\section{Necessary Existence Conditions for $(\Z/ p^2\Z)^{2\alpha +1 } \times (\Z/ p\Z)$}

The previous section derived the calculation of the element $L_0$. In this section, we will use $L_0$ to give necessary existence conditions for the family of
groups $(\Z/ p^2\Z)^{2\alpha +1} \times (\Z/ p\Z)$.

\begin{prop}
\label{GSHDS_SpcCsP1}
Let $D$ be a GSHDS in $(\Z/p \Z) \times ( \Z/ p^2 \Z)^{2\alpha +1}$, $H = \{0\} \times (\Z/ p^2\Z)^{2\alpha +1} \subset G$, 
$L = p\cdot H = (\Z/ p\Z)^{2\alpha +1}$, and $K = \frac{L}{(\Z/ p\Z)}$. Then, 
there are elements $A$ and $B$ in $\Z[K]$ such that:
\begin{enumerate}
 \item The following holds, $p^{2\alpha} A(x) = \chi_0 (A) K(x) + L_0(x) * B(x^{-1})$, where $\chi_0$
 is the principal character of $K$.
 \item The following holds, $p^{2\alpha} B(x) = \chi_0 (B) K(x) + p L_0(x) * A(x^{-1})$, where $\chi_0$
 is the principal character of $K$.
 \item The following holds, $\chi_0(A) = p^{\alpha -1} \epsilon_0 b_0$, where $\epsilon_0$ is 
  $\pm 1$ and $b_0$ is an integer.
 \item The following holds, $\chi_0(B) = p^{\alpha} \epsilon_0 a_0$; where $\epsilon_0$ is 
  $\pm 1$, and equal to the $\epsilon_0$ of part 3; and, $a_0$ is an integer.
 \item All the coefficients of $A(x)$ are odd and $\chi_0(A)$ is odd.
 \item All the coefficients of $B(x)$ are odd and $\chi_0(B)$ is odd.
 \item The numbers $a_0$ and $b_0$ are odd integers.
\end{enumerate}
\end{prop}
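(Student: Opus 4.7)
The plan is to encode the GSHDS $D$ through two elements $A, B \in \Z[K]$ that package its intersection data along $K$-indexed Teichmüller slices, and then push the GSHDS character identities through the block decomposition of $A_{H,H_1}$ from Proposition \ref{ahh1form} so that the $L_0$-convolution structure from the preceding construction appears on the right-hand sides. The skew condition together with $G_2$-invariance forces $D' = D \cap H$ to be a QRS in $H$ and $D'' = D \cap L$ to be a QRS in $L$; using the Teichmüller parametrization of Proposition \ref{hzp2a} with the $l_i$'s chosen as quadratic residues in $\F_{p^{2\alpha+1}}^*$ (so that Proposition \ref{SpcCaseP4} applies), I would write the $\pm 1$ representation of $D'$ as a block vector $(d^{(1)}, d^{(2)})$ indexed by order-$p$ and order-$p^2$ orbits, with the $d^{(2)}$-block naturally $K$-indexed via $\{l_j'\} \leftrightarrow K$. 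Then $A(x) \in \Z[K]$ is obtained by collapsing $d^{(2)}$ along the $L_1$-orbit coordinate so that each coefficient $a_{k_j}$ records the signed count of the Teichmüller slice at $l_j'$, and $B(x) \in \Z[K]$ is defined analogously from $D''$ together with the complementary $(\Z/p\Z)$-factor of $G$.

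Next I would apply $A_{H,H_1}$ to these vectors using the block form of Proposition \ref{ahh1form}. Because $\lambda_{s,t}$ depends only on $k_s + k_t$ by Proposition \ref{SpcCaseP4}.1, the block-row $[J_{H,s,1} \,\cdots\, J_{H,s,p^{2\alpha}}]$ acts on $K$-indexed data precisely as convolution by $L_0$ on $\Z[K]$, with an $x \leftrightarrow x^{-1}$ twist coming from the Galois action of $n_0$. The output of $A_{H,H_1}$ must equal the difference coefficients of $D$, which by Proposition \ref{PropCharValue} (using $|G| = p^{4\alpha+3}$) take the form $\epsilon_\chi p^{2\alpha+1}$ for a $\pm 1$ pattern $\epsilon$. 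Clearing the first-row/column $A_{L,L_1}$-factors via $A_{L,L_1}^2 = p^{2\alpha} I$ from Corollary \ref{cor1} then isolates the order-$p^2$ contribution and produces $p^{2\alpha} A(x) = \chi_0(A) K(x) + L_0(x) * B(x^{-1})$; the parallel manipulation with $D''$ in place of $D'$ yields the second equation, with the extra factor of $p$ arising from the asymmetric $pA_{L,L_1}$ versus $A_{L,L_1}$ entries in the bordering row/column of $A_{H,H_1}$.

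The principal character identities follow by evaluating both equations along the appropriate total-count direction and invoking $A_{L,L_1} \underline{j} = \epsilon_0 p^\alpha \underline{j}$ from Proposition \ref{SpcCaseP4}.2, which extracts the single factor of $\epsilon_0 p^\alpha$ producing $\chi_0(B) = p^\alpha \epsilon_0 a_0$ and (after absorbing one $p$ from the second equation) $\chi_0(A) = p^{\alpha-1} \epsilon_0 b_0$, with $a_0, b_0$ emerging as the global sign-sums of the $\pm 1$ data. For oddness, the key observation is that each slice coefficient $a_{k_j}$ or $b_{k_j}$ is a sum of $m_{2\alpha+1, p} = 1 + p + \cdots + p^{2\alpha}$ terms of $\pm 1$, and $m_{2\alpha+1, p}$ is an odd number of odd terms for odd $p$, so each coefficient has odd parity; the same parity count applied to the totals gives oddness of $a_0, b_0, \chi_0(A), \chi_0(B)$. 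The main obstacle will be the careful bookkeeping that identifies the $J_{H,s,t}$-block action on $\Z[K]$ with convolution by $L_0$ (requiring the Galois-ring identification $K \cong \F_{p^{2\alpha+1}}/\F_p$ and the quadratic-residue choice of Teichmüller lifts) and that correctly tracks the $x \mapsto x^{-1}$ inversion induced by $n_0$ so that the roles of $A(x^{-1})$ and $B(x^{-1})$ in the two equations match the stated orientations.
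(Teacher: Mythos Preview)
Your overall architecture---use the block form of $A_{H,H_1}$ from Proposition~\ref{ahh1form}, collapse each Teichm\"uller slice by pairing with $\underline{j}$, and recognize the $J_{H,s,t}$-block action as $L_0$-convolution on $\Z[K]$---matches the paper. But there is a genuine gap in your setup of the key equation. You assert that $A_{H,H_1}$ applied to the $\pm1$ representation $d'$ of $D'=D\cap H$ yields the difference coefficients of $D$ in $G$, hence a $\pm p^{2\alpha+1}$ pattern. That is false: by Corollary~\ref{cor_qrs_1}, $A_{H,H_1}d'$ gives the difference coefficients of $D'$ \emph{as a QRS in $H$}, and $D'$ is not a GSHDS in $H$ (indeed $|H|=p^{4\alpha+2}$ is an even power, so by Corollary~\ref{qrs_cor2} no QRS in $H$ can have $\nu_p=2\alpha+1$). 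The paper instead passes through the projection $\pi:G\to H$ with kernel $M=\Z/p\Z$: Proposition~\ref{prop8} gives $A_{H,H_1}\,\nu_{G,M}(D)=p^{2\alpha+1}d_{\overline{D}\cap\overline{H}}$, and then applying $A_{H,H_1}$ once more (using $A_{H,H_1}^2=p^{4\alpha+1}I$) and dualizing via $\overline{\overline{D}}=D$ produces the working identity $A_{H,H_1}d'=p^{2\alpha}\nu'$ with $\nu'=\nu_{\overline{G},\overline{M}}(\overline{D})$. The vector $B$ is then built from $\nu'$ via $b_i=\langle\nu_i',\underline{j}\rangle$, not from $D''$; and the second equation comes from re-applying $A_{H,H_1}$ to $\nu'$, not from a ``parallel manipulation with $D''$''. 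Without the projection and the dual GSHDS you have no mechanism producing the factor $p^{2\alpha}$ on the right-hand side.

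A smaller issue: your parity argument for $B$ assumes its coefficients are sums of $\pm1$'s, but the entries of $\nu'$ are odd integers in $[-p,p]$ (difference intersection numbers with respect to a subgroup of order $p$), not $\pm1$'s. The parity conclusion survives---a sum of $m_{2\alpha+1,p}$ odd integers is odd---but the paper actually deduces oddness of the $b_i$ by reducing the matrix equation $p^{2\alpha}a-\epsilon_0 p^{\alpha-1}b_0\underline{j}=Lb$ modulo $2$ after multiplying by $L$, which you should be prepared to do if you cannot directly verify that each $\nu_i'$ has odd entries.
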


\begin{proof}
 Let us assume that there is a GSHDS $D$ in 
$G =(\Z/p \Z) \times (\Z/ p^2 \Z)^{2\alpha +1}$. 
Let $D' = H \cap D$ and  $D'' = L\cap D$.
Let $d'$ be the vector of $\pm 1$s representing $D'$ 
using the $H_1$ orbit representatives of $\overline{H}$ in 
Proposition \ref{hzp2a}.
Clearly, $d' = (d_0'^T,d_1'^T,\ldots, d_{p^{2\alpha}}'^T)$,
where $d_i'$ is a 
$(p^{2\alpha} + \cdots + p +1) \times 1$ vector of $\pm 1$s. 

Consider the canonical projection $\pi : G = (\Z/ p\Z) \times H \rightarrow H$. Clearly,
this projection has kernel $M = (\Z/ p\Z)$. Apply Proposition (\ref{prop8}) to  $D$ and $\pi$. We have that,
\begin{eqnarray}
\nonumber
 A_{H,H_1} \nu_{G,M}(D) &=& df_G(D,H)
\end{eqnarray}
Where $\nu_{G,M}(D)$ are the difference intersection numbers of $D$ with respect to $M$, and $df_G(D,H)$ are the difference
coefficients of $D$ with respect to the characters of $H$ extended to $G$ by $\chi \rightarrow \chi \circ \pi$. Clearly, by 
Proposition (\ref{prop3}) part 3, $df_G(D,H) = p^{\alpha} d_{\overline{D} \cap \overline{H}}$ where $d_{\overline{D} \cap \overline{H}}$ is
the $\pm 1$s representation of the $\overline{D} \cap \overline{H}$, and $\overline{D}$ is the dual GSHDS of $D$ introduced in
Corollary (\ref{corDual}). Thus, we have,
\begin{eqnarray}
\nonumber
 A_{H,H_1} \nu_{G,M}(D) &=& p^{2\alpha +1} d_{\overline{D} \cap \overline{H}}
\end{eqnarray}
We will apply Proposition (\ref{prop3}) part 1 to the previous equation to deduce,
\begin{eqnarray}
\nonumber
 A_{H,H_1}d_{\overline{D} \cap \overline{H}}  &=&  p^{2\alpha } \nu_{G,M}(D)  
\end{eqnarray}
Clearly, $\nu_{G,M}(D)$ is a vector of odd numbers with absolute values $\leq |M| = p$. By Proposition (\ref{prop3}) part 2,
not all entries of $\nu_{G,M}(D)$ are divisible by $p$.
Applying the previous argument to the GSHDS $D = \overline{D}$, we deduce,
\begin{eqnarray}
\nonumber
 A_{H,H_1} d' &=&  A_{H,H_1}d_{D \cap H} \\
\nonumber
	 &=&  p^{2\alpha } \nu_{\overline{G},\overline{M}}(\overline{D})  \\
\nonumber
	&=& p^{2\alpha} \nu'
\end{eqnarray}
where $\nu' = (\nu_0'^T,\nu_1'^T,\ldots,\nu_{p^{2\alpha}}'^T)$ is a vector of
odd integers between $-p$ and $p$ whose entries are not all divisible by $p$.

Using the form of $A_{H,H_1}$ given
by Proposition \ref{ahh1form}, we get the following equations:
\begin{eqnarray}
\label{gr_cond2}
 p \sum_{j=1}^{p^{2\alpha}} A_{L,L_1}d_j' &=& p^{2\alpha}\nu_0',\\
\label{gr_cond3}
 A_{L,L_1} d_0' + p \sum_{j=1}^{p^{2\alpha}} J_{H,i,j}d_j' &=& p^{2\alpha}\nu_i'.
\end{eqnarray}
By taking the inner product of Equation (\ref{gr_cond2}) with $\underline{j}$ and
using the fact that $A_{L,L_1}^T = A_{L,L_1}$, we get
\begin{eqnarray}
\nonumber
 \sum_{j=1}^{p^{2\alpha}} p^{\alpha+1}\epsilon_0 
  \LRGen{d_j', \underline{j}} &=& p^{2\alpha} \LRGen{\nu_0',
 \underline{j}},
\end{eqnarray}
thus,
\begin{eqnarray}
\nonumber
 \sum_{j=1}^{p^{2\alpha}} 
    \LRGen{d_j', \underline{j}} &=& p^{\alpha -1}\epsilon_0 
		\LRGen{\nu_0',\underline{j}}.
\end{eqnarray}
Let $a = (a_1,\ldots,a_{p^{2\alpha}})^T$, where 
	$a_i = \LRGen{d_i', \underline{j}}$, and
let $b = (b_1,\ldots, b_{p^{2\alpha}})^T$, where 
	$b_i = \LRGen{\nu_i',\underline{j}}$.
Also, let 
  $a_0 = \LRGen{d_0', \underline{j}}$ and 
	$b_0 = \LRGen{\nu_0',\underline{j}}$.

We have shown that 
\begin{eqnarray}
\label{gr_cond_eqn1}
\LRGen{a,\underline{j}} &=& p^{\alpha -1}\epsilon_0 b_0.
\end{eqnarray}
Now, we will take the inner product of Equation (\ref{gr_cond3}) with
 $\underline{j}$, we get:
\begin{eqnarray}
\nonumber
 \LRGen{A_{L,L_1}d_0',\underline{j}} + 
	p \sum_{j=1}^{p^{2\alpha}} \LRGen{J_{H,i,j}d_j', 
  \underline{j}} &=& p^{2\alpha}\LRGen{\nu_i',\underline{j}},
\end{eqnarray}
where we used $J_{H,i,j}^T = J_{H,i,j}$. Hence, we have the following 
simplification:
\begin{eqnarray}
\nonumber
 \epsilon_0 p^{\alpha} \LRGen{d_0',\underline{j}} + 
 p \sum_{j=1}^{p^{2\alpha}} \lambda_{i,j} 
	\LRGen{d_j',\underline{j}} &=&
  p^{2\alpha} \LRGen{\nu_i',\underline{j}},
\end{eqnarray}
or,
\begin{eqnarray}
\nonumber
 \epsilon_0 p^{\alpha} a_0 +
 p \sum_{j=1}^{p^{2\alpha}} \lambda_{i,j} a_j &=&
  p^{2\alpha} b_i,
\end{eqnarray}
thus, we can deduce the following matrix equation:
\begin{eqnarray}
\label{gr_cond_eqn2}
 p^{2\alpha-1} b &=& p^{\alpha -1} \epsilon_0 a_0 \underline{j} +
	L b.
\end{eqnarray}

Performing the same analysis but starting with the equation 
$A_{H,H_1} \nu' = p^{2\alpha + 1} d'$, we get the following equations:
\begin{eqnarray}
\label{gr_cond_eqn3}
 \LRGen{b,\underline{j}} &=& p^{\alpha} \epsilon_0 a_0,\\
\label{gr_cond_eqn4}
 p^{2\alpha} a &=& \epsilon_0 p^{\alpha -1} b_0 \underline{j} + L b.
\end{eqnarray}

Let $A(x)$ be the element of $\Z[K]$ that has value $a_i$ at $k_i$, and
$B(x)$ be the element of $\Z[K]$ that has value $b_i$ at $k_i$.
We will use Equation (\ref{gr_cond_eqn2}) to establish a condition on
the character values of $A(x)$ and $B(x)$.

Let $\chi$ be a nonprincipal character of $K$. 
Equation (\ref{gr_cond_eqn2}) gives:
\begin{eqnarray}
\nonumber
 p^{2\alpha -1} \LRGen{b,e_{\chi}} &=&
  \epsilon_0 a_0 p^{\alpha -1} \LRGen{\underline{j},e_{\chi}} +
 \LRGen{La, e_{\chi}}.
\end{eqnarray}
Since $\chi$ is nonprincipal, we 
	have $\LRGen{\underline{j},e_{\chi}} = 0$. Thus,
\begin{eqnarray}
\nonumber
 p^{2\alpha -1} \LRGen{e_{\chi},b} &=& \LRGen{e_{\chi},La}\\
\nonumber
 &=& \LRGen{L e_{\chi},a}\\
\nonumber
 &=& \LRGen{\overline{\chi}(L_0) e_{\overline{\chi}},a} \\
\nonumber
 &=& \overline{\chi}(L_0) \LRGen{e_{\overline{\chi}},a}.
\end{eqnarray}
Since, for any $a$, 
$\LRGen{e_{\chi},a} = \frac{1}{p^{2\alpha}} \overline{\chi}(a)$, we have:
\begin{eqnarray}
\nonumber
 p^{2\alpha -1} \overline{\chi}(B) &=& \overline{\chi}(L_0) \chi(A),\\
 \label{gr_cond_eqn5}
 p^{2\alpha -1} \chi(B) &=& \chi(L_0) \overline{\chi}(A). 
\end{eqnarray}
Similarly, using Equation (\ref{gr_cond_eqn4}), we get:
\begin{eqnarray}
 \label{gr_cond_eqn6}
 p^{2\alpha} \chi(A) &=& \chi(L_0) \overline{\chi}(B). 
\end{eqnarray}

To show part 1, suffices to show that the left hand side of 
part 1 and right hand side
of part 1 have the same character values; 
the equation will follow by Fourier
inversion in $K$. Note that for any nonprincipal character, 
Equation (\ref{gr_cond_eqn6}) shows that the left hand side of
 part 1 and the right
hand side of part 1, do indeed, have the same character 
values. Suffices to show that
the same holds for the principal character.

The principal character of the left hand side of 
part 1 is $p^{2\alpha}\chi_0(A)$. 
On the other hand, the principal character of the right hand side is:
\begin{eqnarray}
\nonumber
 \chi_0( \chi_0(A) K(x) + L_0(x) *B(x^{-1})) &=& 
	\chi_0(A)p^{2\alpha} + \chi_0(L_0)\overline{\chi}(B) \\
\nonumber
 &=& \chi_0(A) p^{2\alpha},
\end{eqnarray}
where we used $\chi_0(L_0) = 0$. This shows that part 1, does 
indeed, agree on both
sides at the principal character. Thus, part 1 follows.

To show part 2, we use a similar analysis. By using 
Equation (\ref{gr_cond_eqn5}), we can deduce that the left
hand side of part 2 has the same character value as the
right hand side whenever the character used is nonprincipal.
Thus, it suffices to show the same for the principal character $\chi_0$.
Clearly, the principal character of the left hand side 
of part 2 is $p^{2\alpha} \chi_0(B)$. On the other
hand, the principal character of the right hand side
is:
\begin{eqnarray}
 \nonumber
  \chi_0( \chi_0(B) K(x) + pL_0(x) *A(x^{-1})) &=& 
	\chi_0(B)p^{2\alpha} + p\chi_0(L_0)\overline{\chi}(A) \\
\nonumber
 &=& \chi_0(B) p^{2\alpha}.
\end{eqnarray}
Thus, part 2 follows.

Clearly, part 3 is Equation (\ref{gr_cond_eqn1}) and part 4 is
Equation (\ref{gr_cond_eqn3}). 

We will show parts 5, 6, and 7 together. First, we will show
that $a_i$ is odd for $i=0,\ldots,r'$. This will show 
that: all the coefficients of $A$ are odd; $a_0$ is odd; 
$\chi_0(B)$ is odd, by part 4; $\chi_0(A)$ is odd, since
it is the sum of an odd number of odd numbers; and
$b_0$ is odd, by part 3. Finally, we will show
that $b_i$ is odd for $i=1,\ldots,r'$.

Consider $a_i = \LRGen{d_i',\underline{j}}$. Clearly, 
$d_i'$ is a vector of $\pm 1$s of length $r' = p^{2\alpha} + \cdots + p + 1$.
Let $x$ be the number of entries in $d_i'$ that are $+1$, and let $y$ be
the number of entries of $d_i'$ that are $-1$. Clearly,
\begin{eqnarray}
 \nonumber
  x-y &=& a_i,\\
 \nonumber
  x+y &=& r'.
\end{eqnarray}
Note that $r' = 1$ mod $2$. Hence, the above equations mod $2$ give
$a_i = x-y = x+y = r' = 1$. Thus, $a_i$ is odd.

Suffices to show $b_i$ is odd for $i=1,\ldots,r'$. By
manipulating Equation (\ref{gr_cond_eqn4}) algebraically we
 can deduce: 
\begin{eqnarray}
 \nonumber
 p^{2\alpha} a - \epsilon_0 p^{\alpha-1} b_0 \underline{j} &=&
	L b.
\end{eqnarray}
By multiplying the above equation by $L$ on both sides, and
simplyfing $L^2$ by using proposition \ref{SpcCaseP4}; we deduce.
\begin{eqnarray}
 \label{SpcCaseE3}
 L( p^{2\alpha} a - \epsilon_0 p^{\alpha-1} b_0 \underline{j} )
 &=& p^{2\alpha -1}(p^{2\alpha} I - J) b.
\end{eqnarray}
Note that mod $2$, 
\begin{eqnarray}
\nonumber
p^{2\alpha} a - \epsilon_0 p^{\alpha-1} b_0 
\underline{j} &=& \underline{j} - \underline{j} \\
 \nonumber
 &=& 0,
\end{eqnarray}
hence, Equation (\ref{SpcCaseE3}) mod $2$ implies:
\begin{eqnarray}
 \nonumber
 0 &=& (I - J)b \\
 \nonumber
  &=& b - \chi_0(B)\underline{j}\\
 \nonumber
  &=& b - \underline{j}. 
\end{eqnarray}
Thus, all the entries of $b$ are odd. 
\end{proof}
 
We leave as an open problem to determine the feasibility of the elements
$A(x),B(x) \in \Z[K]$ derived in proposition \ref{GSHDS_SpcCsP1}. 
We close this section with extra conditions when $\alpha = 1$.

\begin{prop}
Under the assumptions of Proposition (\ref{GSHDS_SpcCsP1}), assume that $\alpha =1$. Then,
\begin{enumerate}
 \item The set $D'' = D \cap L$ is a GSHDS.
 \item The following holds, $\nu_{G,H'}(D) = p^2 d_{D\cap L}$, where $H' = \{ g \in G \mid  p \cdot g = 0 \}$.
\end{enumerate}
\end{prop}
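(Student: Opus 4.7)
The plan is to prove Part 1 first, then leverage it together with Proposition \ref{prop8} to derive Part 2.

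For Part 1, the strategy is to verify that $D'' = D \cap L$ is a QRS in $L$ whose difference coefficients are all divisible by $p$, and then invoke Proposition \ref{prop6} part 2 applied to $L$ (which has order $p^3 = p^{2\cdot 1 + 1}$) to conclude it is a GSHDS. The QRS property follows from the GSHDS property of $D$ in $G$: restricting the skew condition to $L$ gives $D''(x) + D''(x^{n_0}) = L(x) - [1]$, and the invariance of $D$ under quadratic residues mod $p^2$ restricts to invariance of $D''$ under quadratic residues mod $p = exp(L)$. The $p$-divisibility of the difference coefficients of $D''$ is essentially already contained in the proof of Proposition \ref{GSHDS_SpcCsP1}: from the matrix equation $A_{H,H_1} d' = p^{2\alpha} \nu'$ together with the block decomposition of $A_{H,H_1}$ in Proposition \ref{ahh1form}, the $i$-th block row (for $i \geq 1$) rearranges to
\begin{eqnarray}
\nonumber
A_{L,L_1} d'_0 &=& p\bigl(p^{2\alpha - 1} \nu'_i - \sum_j J_{H,i,j} d'_j\bigr).
\end{eqnarray}
Since $d'_0$ is the $\pm 1$-representation of $D''$, Corollary \ref{cor_qrs_1} identifies $A_{L,L_1} d'_0$ with the vector of difference coefficients of $D''$ in $L$, which are therefore divisible by $p$. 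Proposition \ref{prop6} part 2 (with $\alpha = 1$) then yields Part 1.

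For Part 2, I would apply Proposition \ref{prop8} to the surjection $\phi = \mu_p : G \to L$, whose kernel is $H'$ and whose image is $p \cdot G = L$. This gives $A_{L,L_1} \, \nu_{G,H'}(D) = df_G(D,L)$, where the right-hand side collects the difference coefficients of $D$ at the characters $\chi'_i = \chi_i \circ \phi$ for $\chi_i$ ranging over $L_1$-orbit representatives of $\overline{L} \setminus \{\chi_0\}$. Since $D$ is a GSHDS in $G$ with $|G| = p^7 = p^{2\cdot 3 + 1}$, Proposition \ref{PropCharValue} gives $df_G(D,L) = p^3 d$ for some $\pm 1$-vector $d$. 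Applying $A_{L,L_1}$ to both sides and using Corollary \ref{cor1} (i.e.\ $A_{L,L_1}^2 = p^2 I$) yields $\nu_{G,H'}(D) = p \cdot A_{L,L_1} d$. By Part 1, $D''$ is a GSHDS in $L$, and Proposition \ref{prop3} part 3 provides $A_{L,L_1} \overline{d_{D''}} = p \, d_{D''}$, where $\overline{d_{D''}}$ is the $\pm 1$-representation of the dual of $D''$ in $L$. Once $d = \overline{d_{D''}}$ is established, one immediately concludes $\nu_{G,H'}(D) = p^2 \, d_{D \cap L}$.

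The identification $d = \overline{d_{D''}}$ is the crux of Part 2; it asserts that the duality of GSHDSs commutes with the restriction $D \mapsto D \cap L$ in this setting. The justification uses that, under the pairing $\theta_G$, characters of $G$ factoring through $G/H'$ correspond to elements of $H'^{\perp} = L \subset G$, and that the induced pairing between this subgroup and the quotient $G/H' \cong L$ coincides with the natural Galois-trace pairing on $L \cong \F_{p^3}$. Combined with Part 1 applied to the dual $\overline D$ (which shows $\overline D \cap L$ is itself a GSHDS in $L$) together with Corollary \ref{corDual}, this identifies $\overline D \cap L$ with $\overline{D''}$ as subsets of $L$, giving $d = \overline{d_{D''}}$. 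The main obstacle is making this naturality argument fully rigorous: it requires carefully tracking the pairings on $G$ and on $L$ and showing they induce the same identification between $\overline{G/H'}$ and $L$.
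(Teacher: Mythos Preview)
Your proposal is correct and follows essentially the same approach as the paper: Part 1 via the block row of Equation (\ref{gr_cond3}) showing $p \mid A_{L,L_1}d_0'$ and invoking Proposition \ref{prop6}/\ref{prop3}, and Part 2 via Proposition \ref{prop8} applied to $\mu_p$, the identity $A_{L,L_1}^2 = p^2 I$, and the identification of $d$ with the $\pm 1$ representation of $\overline{D}\cap\overline{L}$. The naturality point you flag as the crux is exactly what the paper handles by first noting (via the same argument as Part 1 applied to $\overline{D}$) that $\overline{D}\cap\overline{L}$ is a GSHDS and then invoking $\overline{\overline{D}} = D$; the paper is no more detailed about this step than you are.
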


\begin{proof}
We proceed using a similar argument as that of Proposition (\ref{GSHDS_SpcCsP1}). By considering
Equation (\ref{gr_cond3}), we conclude that $p$ divides all entries of $A_{L,L_1}d_{D \cap L}$. Hence,
by Proposition (\ref{prop3}) part 3, $D \cap L$ is a GSHDS. This shows part 1.

Clearly, a similar argument shows that
$\overline{D} \cap \overline{L}$ is also a GSHDS. Now, consider the projection $\pi' = \mu_p : G \rightarrow p \cdot G = L$ 
and apply Proposition (\ref{prop8}) to the pair $D, \pi'$. Clearly, 
\begin{eqnarray}
\nonumber
 A_{L,L_1} \nu_{G,H'}(D) &=& df_G(D, L)
\end{eqnarray}
Where $\nu_{G,H'}(D)$ are the difference intersection numbers of $D$ with respect to $H'$, and $df_G(D,L)$ are the
difference coefficients of $D$ with respect to the characters of $L$ extended to $G$ by $\chi \rightarrow \chi \circ \pi'$.  
By Proposition (\ref{prop3}) part 3, $df_G(D,L) = p^{\alpha} d_{\overline{D} \cap \overline{L}}$ where $d_{\overline{D} \cap \overline{H}}$ is
the $\pm 1$s representation of the QRS $\overline{D} \cap \overline{L}$, and $\overline{D}$ is the dual GSHDS of $D$ introduced in
Corollary (\ref{corDual}). Thus, we have,
\begin{eqnarray}
\nonumber
 A_{L,L_1} \nu_{G,H'}(D) &=& p^3 d_{\overline{D} \cap \overline{L}}
\end{eqnarray}
Consider,
\begin{eqnarray}
 \nonumber
 p^2 \nu_{G,H'}(D) &=& A_{L,L_1}^2 \nu_{G,H'}(D) \\
 \nonumber
  &=& p^3 A_{L,L_1} d_{\overline{D} \cap \overline{L} } \\
 \nonumber
  &=& p^3 p d_{D \cap L} \text{  because } \overline{D}\cap \overline{L} \text{ is a GSHDS}\\
 \nonumber
  &=& p^4 d_{D \cap L},
\end{eqnarray}
Where, we have used the fact that $\overline{\overline{D}} = D$. Hence, the second part follows.
\end{proof}

\section{Special $p$-divisibility Conditions}
We close this article by showing special $p$-divisibility conditions for
the Difference Intersection Numbers $\nu_{G,L}(D)$ for special families of
groups $L$.

\begin{prop}
Let $D$ be a GSHDS in $G$. Assume that $exp(G) = p^s$, and $k \leq s-1$.
Consider the map $\mu_{p^k} : G \rightarrow G$ by $\mu_{p^k}(g) = p^k \cdot g$. Let $ L = Ker(\mu_{p^k}) = \{ g \in G \mid p^k \cdot g = 0 \}$, and
$H = \mu_{p^k}(G) = p^k \cdot G$. Then,
\begin{enumerate}
 \item There are integer constants $a_{p^k}, b_{p^k}, c_{p^k}$ such that,
\begin{eqnarray}
 \nonumber
  D(x)^{p^k} &=& c_{p^k}[1] + a_{p^k} D(x) + b_{p^k} D(x^{n_0})
\end{eqnarray}
 \item Let $D_H = D \cap H$, assume the coset decomposition $G = L \cup g_1 L \cup \cdots \cup g_r L$. Let $h_i = \mu_{p^k}(g_i)$, and assume that 
$h_{i_1}, \ldots, h_{i_t}$ is a set of $H_1 = (\Z/ exp(H)\Z)^*$ orbit representatives. Then, the following holds in $\Z[H]$,
\begin{eqnarray}
\nonumber
  (a_{p^k} - b_{p^k}) (D_H(x) - D_H(x^{n_0})) &=& \sum_{k=1}^t \nu_{G,L}(g_{i_k},D) ( O_{h_{i_k}}(x) - O_{h_{i_k}}(x^{n_0}) )\\
 \nonumber
	& &  + p^k ( C_H(x) - C_H(x^{n_0}))
\end{eqnarray}
 Where $O_{h_i}$ is the $H_2 = (\Z/ exp(H)\Z)^{*2}$ orbit of $h_i \in H$, and $C_H \in \Z[H]$.
 \item The exact power of $p$ dividing $a_{p^k} - b_{p^k}$ is $p^k$.
 \item The Difference Intersection numbers $\nu_{G,L}(h_i,D)$ are divisible by $p^k$.
\end{enumerate}
\end{prop}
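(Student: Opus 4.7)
The proof naturally organizes into four interlocking steps, which I will address in order 1, 3, 2, 4.

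\textbf{Part 1} is immediate from Proposition \ref{prop2_2}: $A(D) = \langle [1], D(x), D(x^{n_0})\rangle$ is a 3-dimensional $\Z$-subalgebra of $\Z[G]$ with integer structure constants, so $D(x)^{p^k}$ expands uniquely as $c_{p^k}[1] + a_{p^k}D(x) + b_{p^k}D(x^{n_0})$ with $c_{p^k},a_{p^k},b_{p^k} \in \Z$.

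For \textbf{Part 3}, I will apply a non-principal character $\chi$. By Proposition \ref{PropCharValue}, $\chi(D) \in \{\omega, \omega'\}$ with $\omega = (-1 + p^\alpha\sqrt{p^*})/2$, $\omega' = -1 - \omega$, $p^* = \QRSym{-1}p$; eliminating $\omega'$ via $\omega + \omega' = -1$ yields $\omega^{p^k} = (c_{p^k} - b_{p^k}) + (a_{p^k} - b_{p^k})\omega$. Binomial expansion of $\omega^{p^k} = 2^{-p^k}(-1+p^\alpha\sqrt{p^*})^{p^k}$ and extraction of the $\sqrt{p^*}$-coefficient will give
\[
a_{p^k} - b_{p^k} = \frac{1}{2^{p^k-1}}\sum_{\substack{1 \le j \le p^k \\ j \text{ odd}}}\binom{p^k}{j}(-1)^{p^k-j}\,p^{\alpha(j-1)}(p^*)^{(j-1)/2}.
\]
The $j = 1$ summand equals $p^k$, while by Kummer's theorem ($v_p(\binom{p^k}{j}) = k - v_p(j)$) every $j \geq 3$ summand has $p$-valuation $k + (j-1)(\alpha+1/2) - v_p(j) > k$ whenever $\alpha \geq 1$. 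Since $\gcd(2,p) = 1$, this gives $v_p(a_{p^k} - b_{p^k}) = k$ exactly.

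For \textbf{Part 2}, substitute $x \mapsto x^{n_0}$ in Part 1 (using $D^{(n_0^2)} = D$ from Corollary \ref{corDual}) and subtract to obtain the $\Z[G]$-identity $D(x)^{p^k} - D(x^{n_0})^{p^k} = (a_{p^k}-b_{p^k})(D(x) - D(x^{n_0}))$. The Frobenius congruence $D(x)^{p^k} = D(x^{p^k}) + pE^{(k)}(x)$ with $E^{(k)} \in \Z[G]$, followed by restriction to $H = p^k G$ (which is $n_0$-stable, so $\mathrm{res}_H(D^{(n_0)}) = D_H^{(n_0)}$; and $D(x^{p^k})$ is already $H$-supported), yields
\[
(a_{p^k} - b_{p^k})(D_H - D_H^{(n_0)}) = \bigl(D(x^{p^k}) - D(x^{n_0 p^k})\bigr) + p\cdot\mathrm{res}_H\bigl(E^{(k)} - (E^{(k)})^{(n_0)}\bigr).
\]
A direct count identifies $D(x^{p^k}) - D(x^{n_0 p^k}) = \sum_i \nu_{G,L}(g_i,D)\,x^{h_i}$, which via $\nu_{G,L}(ng,D) = \QRSym{n}\nu_{G,L}(g,D)$ and $\nu_{G,L}(0,D) = 0$ regroups into $\sum_{k=1}^t \nu_{G,L}(g_{i_k},D)(O_{h_{i_k}} - O_{h_{i_k}}^{(n_0)})$. \textbf{Part 4} then follows by coefficient comparison at $x^{h_{i_k}}$: the LHS is divisible by $p^k$ by Part 3, so the RHS coefficient $\nu_{G,L}(g_{i_k},D)$ plus the residual-error contribution must be $p^k$-divisible. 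The residual $p\cdot\mathrm{res}_H(E^{(k)} - (E^{(k)})^{(n_0)})$ is already skew, and any skew $p^k$-divisible element of $\Z[H]$ splits as $p^k(C_H - C_H^{(n_0)})$ by choosing $C_H$ supported on QR-orbits only, completing Part 2.

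The \textbf{main obstacle} is the circular-looking dependence: Part 2's splitting requires the error to be divisible by $p^k$, which is equivalent to Part 4, which in turn is what the coefficient comparison above is trying to establish. To break the circle I will prove Part 4 independently via Proposition \ref{prop8}: with $L = \ker\mu_{p^k}$ and $H = G/L$, it yields $A_{H,H_1}\nu_{G,L}(D) = p^\alpha d$ where $d$ is the $\pm 1$-representation of the QRS $\overline{D}\cap\overline{H}$. Squaring via $A_{H,H_1}^2 = (|H|/p)I$ (Corollary \ref{cor1}) gives $\nu_{G,L}(D) = p^{\alpha + 1 - s''}\,A_{H,H_1}d$ with $|H| = p^{s''}$. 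The final $p^k$-divisibility of each $\nu_{G,L}(g_i,D)$ is then forced by the interaction of the Camion-Mann decomposition (Proposition \ref{Camion2}), which constrains the summand structure of $G$ and hence $s''$, with the Johnsen and Chen-Sehgal-Xiang exponent bounds and the congruence of Proposition \ref{prop7} applied to characters of $H$. With Part 4 secured, the error in Part 2 becomes automatically $p^k$-divisible and Part 2 closes.
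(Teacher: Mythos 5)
Your Parts 1 and 3 are correct and are essentially the paper's own arguments: Part 1 is the $3$-dimensionality of $A(D)$ from Proposition \ref{prop2_2}, and Part 3 is the same character-value/binomial-expansion computation the paper performs, with your Kummer-valuation bookkeeping matching the paper's conclusion that the $j=1$ term contributes exactly $p^k$ and all higher terms are divisible by a strictly larger power. The divergence is in Parts 2 and 4. The paper does not route through the mod-$p$ Frobenius congruence: it asserts (via the multinomial theorem) that $D(x)^{p^k} = \mu_{p^k}\cdot D(x) + p^k C(x)$ with the error already divisible by $p^k$, restricts to $H$ to get Part 2 directly, and then obtains Part 4 from Parts 2 and 3 by the same coefficient comparison you describe. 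You only justify divisibility of the error by $p$, you correctly observe that with only that much the $p^k$-statement of Part 2 is equivalent to Part 4 (given Part 3), and you therefore need an independent proof of Part 4 — and that is where the proposal has a genuine gap.

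The sketched independent argument for Part 4 is not a proof and, as outlined, does not work. From Proposition \ref{prop8} and Corollary \ref{cor1} you get $p^{s''-1}\,\nu_{G,L}(D) = p^{\alpha} A_{H,H_1} d$ with $|H| = p^{s''}$, hence $v_p(\nu_{G,L}(g_i,D)) \geq \alpha + 1 - s'' + v_p\bigl((A_{H,H_1}d)_i\bigr)$; without a lower bound on $v_p\bigl((A_{H,H_1}d)_i\bigr)$ — i.e., on the $p$-divisibility of the QRS $\overline{D}\cap\overline{H}$, which is essentially the hard content of the proposition — the only unconditional bound is $\alpha + 1 - s''$, and this can be smaller than $k$, indeed negative, for shapes permitted by Proposition \ref{Camion2} and both exponent bounds. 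For example $G = (\Z/p^3\Z)^5$ has $\alpha = 7$, $s = 3$, satisfies $2s \leq \alpha+1$, and for $k=1$ one gets $H \cong (\Z/p^2\Z)^5$, $s'' = 10$, so the identity reads $A_{H,H_1}d = p^2\,\nu_{G,L}(D)$: the divisibility flows in the opposite direction and gives no information about $p \mid \nu_{G,L}(g_i,D)$. Neither Proposition \ref{prop7} nor the Camion--Mann constraint supplies the missing lower bound on the entries of $A_{H,H_1}d$, so the circularity you flagged is not actually broken. To close Parts 2 and 4 you would have to establish the $p^k$-divisibility of the error term $D(x)^{p^k} - \mu_{p^k}\cdot D(x)$ (restricted to $H$, in its skew combination) — which is exactly the step the paper asserts and your argument would need to reproduce or replace.
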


\begin{proof}
We will consider the association scheme $A(G) = \LRGen{[1],D(x),D(x^{n_0})}$. By Proposition (\ref{prop2_2}) part 1, this  
association scheme is $3$-dimensional. Thus, for each $p^k$, there are integers $c_{p^k}, a_{p^k}, b_{p^k}$ such that,

\begin{eqnarray}
\nonumber
 D(x)^{p^k} &=& c_{p^k}[1] + a_{p^k} D(x) + b_{p^k} D(x^{n_0})\\
\nonumber
 (D(x^{n_0}))^{p^k} &=& c_{p^k}[1] + b_{p^k} D(x) + a_{p^k} D(x^{n_0})
\end{eqnarray}
Hence, part 1 follows. Clearly,
\begin{eqnarray}
\nonumber
 D(x)^{p^k} - (D(x^{n_0}))^{p^k} &=& (a_{p^k} - b_{p^k}) ( D(x) - D(x^{n_0}))
\end{eqnarray}
Also, by the multinomial theorem applied to $D(x)^{p^k}$ in $\Z[G]$,
\begin{eqnarray}
\nonumber 
D(x)^{p^k} &=& \mu_{p^k} \cdot D(x) + p^k C(x)\\
\nonumber
(D(x^{n_0}))^{p^k} &=& \mu_{p^k} \cdot D(x^{n_0}) + p^k C(x^{n_0})
\end{eqnarray}
Where $\mu_{p^k} \cdot D(x) = \sum\{ x^{p^k \cdot g} \mid g \in D \}$, and $C(x) \in \Z[G]$. 
Thus, 
\begin{eqnarray}
\nonumber
\mu_{p^k} \cdot ( D(x) - D(x^{n_0})) + p^k ( C(x) - C(x^{n_0}) )   &=&
 	D(x)^{p^k} - (D(x^{n_0}))^{p^k} \\
\nonumber
 &=& ( a_{p^k} - b_{p^k}) ( D(x) - D(x^{n_0}) )	 
\end{eqnarray}
We restrict the previous equation to $H$. Thus, we deduce a similar equation
in $\Z[H]$,
\begin{eqnarray}
\label{gshds_third_eq1}
 ( a_{p^k} - b_{p^k} ) ( D_H(x) - D_H(x^{n_0}) ) &=& \mu_{p^k} \cdot ( D(x) - D(x^{n_0}) ) \\
 \nonumber
 & & + p^k (C_H(x) - C_H(x^{n_0}) )
\end{eqnarray}
Clearly,
\begin{eqnarray}
\nonumber
 \mu_{p^k} \cdot ( D(x) - D(x^{n_0}) ) &=& 
\sum_{i=1}^r \nu_{G,L}(g_i,D) x^{[h_i]}
\end{eqnarray}
Note that for $n \in (\Z/ exp(H)\Z)^*$,
\begin{eqnarray}
\nonumber
 \nu_{G,L}(n \cdot g,D) &=& \QRSym{n} \nu_{G,L}(g,D)
\end{eqnarray}
Hence, part 2 follows from Equation (\ref{gshds_third_eq1}). 

Now, we proceed to show part 3. Let $\chi \in \overline{G}$ such that,
\begin{eqnarray}
\nonumber
\chi(D) &=& \frac{-1 + p^{\alpha} \sqrt{ \QRSym{-1}p}}{2}\\
\nonumber
\chi(D^{(n_0)}) &=& \frac{-1 - p^{\alpha} \sqrt{ \QRSym{-1}p}}{2}
\end{eqnarray}
Clearly, by using the binomial theorem,
\begin{eqnarray}
\nonumber
 \chi(D)^{p^k} - \chi(D^{(n_0)})^{p^k} &=&
\frac{1}{2^{p^k-1}} \sum_{f=0}^{\frac{p^k -1}{2}} 
			\binom{p^k}{2f+1} p^{\alpha (2f+1) + f}\QRSym{-1}^f 
				\sqrt{\QRSym{-1}p}
\end{eqnarray}
Note that,
\begin{eqnarray}
\nonumber
 \chi(D)^{p^k} - \chi(D^{(n_0)})^{p^k} &=&
	(a_{p^k} - b_{p^k}) ( \chi(D) - \chi(D^{(n_0)}) ) \\
\nonumber
	&=& (a_{p^k} - b_{p^k}) p^{\alpha} \sqrt{\QRSym{-1}p}
\end{eqnarray}
Hence, by equating the last two equations, we get,
\begin{eqnarray}
\nonumber
a_{p^k} - b_{p^k} &=& 
\frac{1}{2^{p^k-1}} \sum_{f=0}^{\frac{p^k -1}{2}} 
			\binom{p^k}{2f+1} p^{(2\alpha +1)f}\QRSym{-1}^f 
\end{eqnarray}
Clearly, every term in the sum above is divisible by $p^k$ and the first term
of the sum is $p^k$. Hence, the highest power of $p$ dividing $a_{p^k} - b_{p^k}$ is $p^k$. This shows part 3.

Part 4 follows from parts 2 and 3.
\end{proof}

\section{Conclusion}

We close this article by thanking the referees for their valuable criticism and for pointing out the connection between GSHDSs and Lam's generalized cyclic difference sets.

\bibliographystyle{plain}
\bibliography{thesis_third}

\end{document}